\documentclass[12pt,reqno]{amsart}
\usepackage{etoolbox}

\makeatletter
\patchcmd\maketitle
  {\uppercasenonmath\shorttitle}
  {}
  {}{}
\patchcmd\maketitle
  {\@nx\MakeUppercase{\the\toks@}}
  {\the\toks@}
  {}
  {}{}
\patchcmd\@settitle{\uppercasenonmath\@title}{\Large}{}{}
\patchcmd\@setauthors
  {\MakeUppercase{\authors}}
  {\authors}
  {}{}
\makeatother
\usepackage{amsmath,amssymb,amsthm}
\usepackage{color}
\usepackage{url}
\usepackage{tikz-cd}
\usepackage[utf8]{inputenc}
\usepackage[T1]{fontenc}
\textheight 22.5truecm \textwidth 14.5truecm
\setlength{\oddsidemargin}{0.35in}\setlength{\evensidemargin}{0.35in}

\setlength{\topmargin}{-.5cm}
\newtheorem{theorem}{Theorem}[section]
\newtheorem{definition}{Definition}[section]

\newtheorem{corollary}{Corollary}[section]
\newtheorem{proposition}{Proposition}[section]
\newtheorem{lemma}{Lemma}[section]

\newtheorem{example}{Example}[section]
\numberwithin{equation}{section}
\usepackage[colorlinks=true]{hyperref} 
\hypersetup{urlcolor=blue, citecolor=red , linkcolor= blue}
\usepackage[capitalise,noabbrev,nameinlink]{cleveref}      
\begin{document}
\address{$^{[1]}$ University of Sfax, Tunisia.}
\email{\url{kais.feki@hotmail.com}}
\subjclass[2010]{Primary 46C05, 47A12; Secondary 47B65, 47B15, 47B20}

\keywords{Positive operator, semi-inner product, spectral radius, numerical radius.}

\date{\today}
\author[Kais Feki] {\Large{Kais Feki}$^{1}$}
\title[Some $A$-spectral radius inequalities for $A$-bounded Hilbert space operators]{Some $A$-spectral radius inequalities for $A$-bounded Hilbert space operators}

\maketitle

\begin{abstract}
Let $r_A(T)$ denote the $A$-spectral radius of an operator $T$ which is bounded with respect to the seminorm induced by a positive operator $A$ on a complex Hilbert space $\mathcal{H}$. In this paper, we aim to establish some $A$-spectral radius inequalities for products, sums and commutators of $A$-bounded operators. Moreover, under suitable conditions on $T$ and $A$ we show that
  \begin{equation*}
r_A\left( \sum_{k=0}^{+\infty}c_{k}T^{k}\right)
\leq \sum_{k=0}^{+\infty}|c_{k}|\left[r_A(T)\right]^{k},
  \end{equation*}
where $c_k$ are complex numbers for all $k\in \mathbb{N}$.
\end{abstract}

\section{Introduction and Preliminaries}\label{s1}
Let $\mathcal{B}(\mathcal{H}, \mathcal{K})$ denote the space of all bounded linear operators from a complex Hilbert space $(\mathcal{H}, \langle\cdot\mid \cdot\rangle)$ into a Hilbert space $\mathcal{K}$. We stand $\mathcal{B}(\mathcal{H})$ for $\mathcal{B}(\mathcal{H}, \mathcal{K})$ with $\mathcal{H}=\mathcal{K}$ as a $C^*$-algebra with the operator norm $\|\cdot\|$ and the unit $I$. If $\mathcal{H}=\mathbb{C}^d$, we identify $\mathcal{B}(\mathbb{C}^d)$ with the matrix algebra $\mathbb{M}_d(\mathbb{C})$ of $d\times d$ complex matrices. In all that follows, by an operator we mean a bounded linear operator. The range and the null space of an operator $T$ are denoted by ${\mathcal R}(T)$ and ${\mathcal N}(T)$, respectively. Also, $T^*$ will be denoted to be the adjoint of $T$.

An operator $T$ is called positive if $\langle Tx\mid x\rangle\geq0$ for all $x\in{\mathcal H }$, and we then write $T\geq 0$. The cone of all positive operators will be denoted by $\mathcal{B}(\mathcal{H})^+$.

Throughout this article, we shall assume that $A\in\mathcal{B}(\mathcal{H})$ is a positive operator. Such an $A$ induces the following positive semi-definite sesquilinear form:
$$\langle\cdot\mid\cdot\rangle_{A}:\mathcal{H}\times \mathcal{H}\longrightarrow\mathbb{C},\;(x,y)\longmapsto\langle x\mid y\rangle_{A} :=\langle Ax\mid y\rangle.$$
Notice that the induced seminorm is given by $\|x\|_A=\langle x\mid x\rangle_A^{1/2}=\|A^{1/2}x\|$, for every $x\in \mathcal{H}$. Here, $A^{1/2}$ is denoted to be the square root of $A$. This makes $\mathcal{H}$ into a semi-Hilbertian space. One can check that $\|\cdot\|_A$ is a norm on $\mathcal{H}$ if and only if $A$ is injective, and that $(\mathcal{H},\|\cdot\|_A)$ is complete if and only if $\mathcal{R}(A)$ is closed.

The following celebrated assertion is known as the Douglas theorem or Douglas majorization theorem.
\begin{theorem}\label{doug}(\cite[Theorem 1]{doug})
If $T, S \in \mathcal{B}(\mathcal{H})$, then the following statements are equivalent:
\begin{enumerate}
\item[{\rm (i)}] $\mathcal{R}(S) \subseteq \mathcal{R}(T)$;
\item[{\rm (ii)}] $TD=S$ for some $D\in \mathcal{B}(\mathcal{H})$;
\item[{\rm (iii)}]$SS^* \leq \lambda^2 TT^*$ for some $\lambda\geq 0$ (or equivalently $\|S^*x\| \leq \lambda\|T^*x\|$ for all $x\in \mathcal{H}$).
\end{enumerate}
Moreover, if one of these conditions holds then, there exists a unique solution $Q\in\mathcal{B}(\mathcal{H})$ of the equation $TX=S$ (known as the Douglas solution) so that
\begin{enumerate}
\item[{\rm (a)}] $\|Q\|^2=\inf\left\{\mu\,;\;SS^*\leq \mu TT^*\right\}$,
\item[{\rm (b)}] $\mathcal{N}(Q)=\mathcal{N}(S)$,
\item[{\rm (c)}] $\mathcal{R}(Q) \subseteq \overline{\mathcal{R}(T^{*})}$.
\end{enumerate}
\end{theorem}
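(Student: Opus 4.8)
The plan is to prove the chain of implications (ii) $\Rightarrow$ (i) $\Rightarrow$ (ii) $\Rightarrow$ (iii) $\Rightarrow$ (ii) and then to read off the properties (a)--(c) of the Douglas solution from the explicit operator produced along the way. The implication (ii) $\Rightarrow$ (i) is immediate since $\mathcal{R}(S)=\mathcal{R}(TD)\subseteq\mathcal{R}(T)$, and (ii) $\Rightarrow$ (iii) follows by writing $SS^*=TDD^*T^*\leq\|D\|^2\,TT^*$, so that $\lambda=\|D\|$ works and, in fact, the norm form of (iii) comes from evaluating this operator inequality at vectors.

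For (i) $\Rightarrow$ (ii) I would invoke the closed graph theorem. For each $x\in\mathcal{H}$ the vector $Sx$ lies in $\mathcal{R}(T)$, so there is a unique $y\in\overline{\mathcal{R}(T^*)}$ with $Ty=Sx$, uniqueness holding because $\mathcal{N}(T)\cap\overline{\mathcal{R}(T^*)}=\{0\}$ (as $\mathcal{N}(T)=\mathcal{R}(T^*)^{\perp}$); set $Qx:=y$. Linearity of $Q$ is clear, and if $x_n\to x$ and $Qx_n\to z$, then $Tz=\lim TQx_n=\lim Sx_n=Sx$ while $z\in\overline{\mathcal{R}(T^*)}$ since this subspace is closed, so $z=Qx$ by the uniqueness just noted. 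Hence $Q$ has closed graph, is bounded, and satisfies $TQ=S$.

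The core of the argument is (iii) $\Rightarrow$ (ii). The hypothesis $SS^*\leq\lambda^2\,TT^*$ is equivalent to $\|S^*x\|\leq\lambda\|T^*x\|$ for every $x\in\mathcal{H}$. This allows one to define on the (dense) subspace $\mathcal{R}(T^*)$ of $\overline{\mathcal{R}(T^*)}$ a linear map $E_0$ by $E_0(T^*x):=S^*x$; it is well defined and bounded by $\lambda$ precisely because of this norm inequality. Extending $E_0$ continuously to $\overline{\mathcal{R}(T^*)}$ and setting it equal to $0$ on $\mathcal{N}(T)=\mathcal{R}(T^*)^{\perp}$ produces $E\in\mathcal{B}(\mathcal{H})$ with $ET^*=S^*$, $\|E\|\leq\lambda$ and $\mathcal{N}(T)\subseteq\mathcal{N}(E)$. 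Taking adjoints and putting $D:=E^*$ gives $TD=S$, closing the equivalences.

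Finally, the operator $Q$ built in (i) $\Rightarrow$ (ii) has $\mathcal{R}(Q)\subseteq\overline{\mathcal{R}(T^*)}$, which is (c); and any two solutions of $TX=S$ with range in $\overline{\mathcal{R}(T^*)}$ coincide, since their difference has range inside $\mathcal{N}(T)\cap\overline{\mathcal{R}(T^*)}=\{0\}$, giving uniqueness of $Q$. For (b), $Qx=0$ forces $Sx=TQx=0$, while $Sx=0$ forces $Qx\in\mathcal{N}(T)\cap\overline{\mathcal{R}(T^*)}=\{0\}$, so $\mathcal{N}(Q)=\mathcal{N}(S)$. For (a), note that the operator $E^*$ produced from a given $\mu$ with $SS^*\leq\mu\,TT^*$ has $\overline{\mathcal{R}(E^*)}=\mathcal{N}(E)^{\perp}\subseteq\mathcal{N}(T)^{\perp}=\overline{\mathcal{R}(T^*)}$, hence equals $Q$ by uniqueness, so $\|Q\|=\|E\|\leq\sqrt{\mu}$; thus $\|Q\|^{2}\leq\inf\{\mu\,;\,SS^*\leq\mu TT^*\}$, and the reverse inequality follows from (ii) $\Rightarrow$ (iii) applied to $D=Q$, with the infimum attained because $\mathcal{B}(\mathcal{H})^{+}$ is closed. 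The main obstacle is the construction in (iii) $\Rightarrow$ (ii): one must check that $T^*x\mapsto S^*x$ is well defined on $\mathcal{R}(T^*)$ and extends boundedly, which is exactly where the quantitative hypothesis is used; the closed graph theorem in (i) $\Rightarrow$ (ii) is the other non-elementary ingredient.
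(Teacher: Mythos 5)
Your proof is correct, but note that the paper itself offers no proof of this statement: it is quoted verbatim as the Douglas majorization theorem from the cited 1966 paper of Douglas, and your argument is essentially the classical one given there (the densely defined map $T^*x\mapsto S^*x$ on $\mathcal{R}(T^*)$ for (iii)$\Rightarrow$(ii), the closed graph theorem for (i)$\Rightarrow$(ii), and the range/kernel bookkeeping for (a)--(c)). The only cosmetic blemish is the closing appeal to closedness of $\mathcal{B}(\mathcal{H})^{+}$ in (a), which is unnecessary since the infimum is attained at $\mu=\|Q\|^{2}$ directly from $SS^{*}=TQQ^{*}T^{*}\leq\|Q\|^{2}TT^{*}$.
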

\begin{definition} (\cite{acg1})
Let $A\in\mathcal{B}(\mathcal{H})^+$ and $T \in \mathcal{B}(\mathcal{H})$. An operator $S\in\mathcal{B}(\mathcal{H})$ is called an $A$-adjoint of $T$ if for every $x,y\in \mathcal{H}$, the identity $\langle Tx\mid y\rangle_A=\langle x\mid Sy\rangle_A$ holds. That is $S$ is solution in $\mathcal{B}(\mathcal{H})$ of the equation $AX=T^*A$.
\end{definition}
The set of all operators which admit $A^{1/2}$-adjoints is denoted by $\mathcal{B}_{A^{1/2}}(\mathcal{H})$. By applying Theorem \ref{doug}, it can observed that
\begin{equation}\label{abbbbbbbb}
\mathcal{B}_{A^{1/2}}(\mathcal{H})=\left\{T \in \mathcal{B}(\mathcal{H})\,;\;\exists \,\lambda > 0\,;\;\|Tx\|_{A} \leq \lambda \|x\|_{A},\;\forall\,x\in \mathcal{H}  \right\}.
\end{equation}
Operators in $\mathcal{B}_{A^{1/2}}(\mathcal{H})$ are called $A$-bounded. Note that $\mathcal{B}_{A}(\mathcal{H})$ is a subalgebra of $\mathcal{B}(\mathcal{H})$ which is neither closed nor dense in $\mathcal{B}(\mathcal{H})$ (see \cite{acg1}). Further, clearly $\langle\cdot\mid\cdot\rangle_{A}$ induces the following seminorm on $\mathcal{B}_{A^{1/2}}(\mathcal{H})$:
\begin{equation}\label{aseminorm}
\|T\|_A:=\sup_{\substack{x\in \overline{\mathcal{R}(A)},\\ x\not=0}}\frac{\|Tx\|_A}{\|x\|_A}=\sup\left\{\|Tx\|_{A}\,;\;x\in \mathcal{H},\,\|x\|_{A}= 1\right\}<\infty.
\end{equation}
In addition, it was shown in \cite{fg} that for $T\in\mathcal{B}_{A^{1/2}}(\mathcal{H})$ we have:
\begin{equation}\label{semi2}
\|T\|_A=\sup\left\{|\langle Tx\mid y\rangle_A|\,;\;x,y\in \mathcal{H},\,\|x\|_{A}=\|y\|_{A}= 1\right\}.
\end{equation}
We would like to mention that the inclusion $\mathcal{B}_{A^{1/2}}(\mathcal{H})\subseteq \mathcal{B}(\mathcal{H})$ is in general strict as it is shown in the following example.
 \begin{example}
Let $\mathcal{H}=\ell_{\mathbb{N}^*}^2(\mathbb{C})$ and $A$ be the diagonal operator on $\ell_{\mathbb{N}^*}^2(\mathbb{C})$ defined as $Ae_n=\frac{e_n}{n!}$ for all $n\in \mathbb{N}^*$, where $(e_n)_{n\in \mathbb{N}^*}$ is denoted to be the canonical basis of $\ell_{\mathbb{N}^*}^2(\mathbb{C})$. Let also $T_\ell$ be the backward shift operator on $\ell_{\mathbb{N}^*}^2(\mathbb{C})$ (that is $T_\ell e_1=0$ and $T_\ell e_n=e_{n-1}$ for all $n\geq 2$). It can observed that $\|e_n\|_A=\frac{1}{\sqrt{n!}}$ for all $n\in \mathbb{N}^*$ and $\|T_\ell e_n\|_A=\frac{1}{\sqrt{(n-1)!}}=\sqrt{n}\|e_n\|_A$ for $n\geq 2$. Hence, we infer that $\|T_\ell\|_A=+\infty$ and $T_\ell\in \mathcal{B}(\ell_{\mathbb{N}^*}^2(\mathbb{C}))\setminus\mathcal{B}_{A^{1/2}}(\ell_{\mathbb{N}^*}^2(\mathbb{C}))$.
\end{example}
Before we move on, let us emphasize the following two facts. If $T\in \mathcal{B}_{A^{1/2}}(\mathcal{H})$, then
\begin{equation}\label{semiiineq}
\|Tx\|_A\leq \|T\|_A\|x\|_A,\;\forall\,x\in \mathcal{H}.
\end{equation}
Moreover for every $T,S\in \mathcal{B}_{A^{1/2}}(\mathcal{H})$, we have
\begin{equation}\label{semiiineq2}
\|TS\|_A\leq \|T\|_A\|S\|_A.
\end{equation}
Recently, the present author introduced in \cite{feki01} the concept of the $A$-spectral radius of $A$-bounded operators. Henceforth, $A$ is implicitly understood as a positive operator. His definition reads as follows: for $T\in \mathcal{B}_{A^{1/2}}(\mathcal{H})$ we have
\begin{equation}\label{newrad}
r_A(T):=\displaystyle\inf_{n\in \mathbb{N}^*}\|T^n\|_A^{\frac{1}{n}}=\displaystyle\lim_{n\to\infty}\|T^n\|_A^{\frac{1}{n}}.
\end{equation}
Notice that the second equality in \eqref{newrad} is also proved in \cite{feki01}. If $A=I$, we get the well-known spectral radius formula of an operator denoted simply by $r(T)$. The study of the spectral radius of Hilbert space operators received considerable attention in the last decades. The reader may consult \cite{Ki,A.K.2,BBP,dfilot,bakfeki02} and the references therein.

In the next proposition we collect some properties of the $A$-spectral radius.
\begin{proposition}(\cite{feki01})\label{22}
Let $T,S\in \mathcal{B}_{A^{1/2}}(\mathcal{H})$. Then the following assertions hold:
\begin{itemize}
  \item [(1)] If $TS=ST$, then $r_A(TS)\leq r_A(T)r_A(S)$.
  \item [(2)] If $TS=ST$, then $r_A(T+S)\leq r_A(T)+r_A(S)$.
 \item [(3)] $r_A(T^k)=[r_A(T)]^k$ for all $k\in \mathbb{N}^*$.
\end{itemize}
\end{proposition}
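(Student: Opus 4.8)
The plan is to derive all three assertions from the limit formula \eqref{newrad} together with the submultiplicativity \eqref{semiiineq2} of $\|\cdot\|_A$, treating the statements in increasing order of difficulty. I would dispose of (3) first. Since $\mathcal{B}_{A^{1/2}}(\mathcal{H})$ is an algebra, $T^k\in\mathcal{B}_{A^{1/2}}(\mathcal{H})$, and
\[
r_A(T^k)=\lim_{n\to\infty}\big\|(T^k)^n\big\|_A^{1/n}=\lim_{n\to\infty}\big\|T^{kn}\big\|_A^{1/n}=\lim_{n\to\infty}\Big(\big\|T^{kn}\big\|_A^{1/(kn)}\Big)^{k}=[r_A(T)]^{k},
\]
the last equality being legitimate because the sequence $\big(\|T^m\|_A^{1/m}\big)_m$ converges, so every subsequence has the same limit. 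For (1), the assumption $TS=ST$ gives $(TS)^n=T^nS^n$, whence \eqref{semiiineq2} yields $\|(TS)^n\|_A\le\|T^n\|_A\,\|S^n\|_A$; taking $n$-th roots and letting $n\to\infty$ produces $r_A(TS)\le r_A(T)\,r_A(S)$.

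The real work lies in (2). Here I would first record the auxiliary estimate that for every $\varepsilon>0$ there is a constant $M_\varepsilon\ge1$ with
\[
\|T^j\|_A\le M_\varepsilon\,\big(r_A(T)+\varepsilon\big)^{j}\quad\text{and}\quad\|S^j\|_A\le M_\varepsilon\,\big(r_A(S)+\varepsilon\big)^{j}\qquad(j\in\mathbb{N}).
\]
This follows from \eqref{newrad}: because $\|T^m\|_A^{1/m}\to r_A(T)$, all but finitely many indices satisfy $\|T^m\|_A\le(r_A(T)+\varepsilon)^m$, and the finitely many exceptional terms (along with $\|T^0\|_A=\|I\|_A\le1$) are absorbed into a large enough $M_\varepsilon$; the same applies to $S$, and one takes the larger of the two constants. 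Next, since $T$ and $S$ commute one may expand $(T+S)^n$ by the binomial theorem, apply \eqref{semiiineq2} term by term, insert the auxiliary estimate, and resum the binomial series:
\[
\|(T+S)^n\|_A\le\sum_{k=0}^{n}\binom{n}{k}\|T^k\|_A\,\|S^{n-k}\|_A\le M_\varepsilon^2\sum_{k=0}^{n}\binom{n}{k}\big(r_A(T)+\varepsilon\big)^{k}\big(r_A(S)+\varepsilon\big)^{n-k}=M_\varepsilon^2\big(r_A(T)+r_A(S)+2\varepsilon\big)^{n}.
\]
Taking $n$-th roots, then $n\to\infty$ (so that $M_\varepsilon^{2/n}\to1$), and finally $\varepsilon\downarrow0$ gives $r_A(T+S)\le r_A(T)+r_A(S)$.

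I expect the auxiliary estimate used in (2) to be the only genuinely delicate point; the rest is a routine combination of \eqref{semiiineq2}, the binomial theorem, and the mere existence of the limit in \eqref{newrad}. It is also worth noting the trivial degenerate case $A=0$ (where $\|\cdot\|_A\equiv0$ and all three inequalities hold with both sides zero), and recording that $T^k$, $TS$, $T^kS^{n-k}$ and $T+S$ indeed belong to $\mathcal{B}_{A^{1/2}}(\mathcal{H})$ because that set is a subalgebra of $\mathcal{B}(\mathcal{H})$.
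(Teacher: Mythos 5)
Your argument is correct, but there is nothing in this paper to compare it against: Proposition \ref{22} is stated with the citation \cite{feki01} and no proof is given here, the result being imported from that earlier work. What you have written is the standard Gelfand-type argument transplanted to the seminormed algebra $\big(\mathcal{B}_{A^{1/2}}(\mathcal{H}),\|\cdot\|_A\big)$, and each step goes through: (3) follows because $\|T^{kn}\|_A^{1/(kn)}$ is a subsequence of a convergent sequence; (1) follows from $(TS)^n=T^nS^n$ and the submultiplicativity \eqref{semiiineq2}; and for (2) the growth estimate $\|T^j\|_A\leq M_\varepsilon\big(r_A(T)+\varepsilon\big)^j$, the binomial expansion of $(T+S)^n$ (valid since $TS=ST$), and the resummation into $M_\varepsilon^2\big(r_A(T)+r_A(S)+2\varepsilon\big)^n$ are exactly the right ingredients, with $M_\varepsilon^{2/n}\to1$ and then $\varepsilon\downarrow0$ finishing the job. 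The only facts you use beyond elementary analysis --- that $\mathcal{B}_{A^{1/2}}(\mathcal{H})$ is a subalgebra (so all the operators involved are again $A$-bounded, cf. \eqref{abbbbbbbb}), that $\|\cdot\|_A$ is submultiplicative on it, and that the limit in \eqref{newrad} exists --- are all recorded in Section \ref{s1}, and your remarks on the degenerate case $A=0$ and on absorbing the finitely many exceptional indices into $M_\varepsilon$ close the small gaps one might otherwise worry about.
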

It should be emphasized that $r_A(\cdot)$ satisfies the commutativity property, which asserts that
\begin{equation}\label{commut}
r_A(TS)=r_A(ST),
\end{equation}
for every $T,S\in \mathcal{B}_{A^{1/2}}(\mathcal{H})$ (see \cite{feki01}).

Recently, the $A$-numerical range of $T\in \mathcal{B}(\mathcal{H})$ is introduced by H. Baklouti et al. in \cite{bakfeki01} as
$W_A(T) = \big\{{\langle Tx\mid x\rangle}_A: \,\, x\in \mathcal{H},\, {\|x\|}_A = 1\big\}$.
This new concept is a nonempty convex subset of $\mathbb{C}$ which is not necessarily closed. Moreover its supremum modulus is called the $A$-numerical radius of $T$ and it is given by
$$\omega_A(T) = \sup\big\{|\langle Tx\mid x\rangle_A|: \,\, x\in \mathcal{H},\, \|x\|_A = 1\big\}.$$
Notice that it may happen that $\omega_A(T) = + \infty$ for some $T\in\mathcal{B}(\mathcal{H})$. Indeed, one can consider the following operators
$A = \begin{pmatrix}
1 & 0 \\
0 & 0
\end{pmatrix}\in \mathbb{M}_2(\mathbb{C})^+$ and
$T = \begin{pmatrix}
0 & 1 \\
1 & 0
\end{pmatrix}\in \mathbb{M}_2(\mathbb{C})$. However, $\omega_A(T)< + \infty$ for every $A$-bounded operator $T$. More precisely, for all $T\in\mathcal{B}_{A^{1/2}}(\mathcal{H})$ we have
\begin{equation}\label{equivalentsemi}
\tfrac{1}{2}\|T\|_A \leq \omega_A(T)\leq \|T\|_A.
\end{equation}
On the other hand, the present author proved in \cite{feki01} that for every $T\in\mathcal{B}_{A^{1/2}}(\mathcal{H})$ we have
\begin{equation}\label{dom}
r_A(T) \leq \omega_A(T).
\end{equation}

Now, we mention that neither the existence nor the uniqueness of an $A$-adjoint operator is guaranteed. The set of all operators which admit $A$-adjoints is denoted by $\mathcal{B}_{A}(\mathcal{H})$. By applying Theorem \ref{doug} we see that
$$\mathcal{B}_{A}(\mathcal{H})=\left\{T\in \mathcal{B}(\mathcal{H})\,;\;\mathcal{R}(T^{*}A)\subset \mathcal{R}(A)\right\}.$$
Like $\mathcal{B}_{A^{1/2}}(\mathcal{H})$, the subspace $\mathcal{B}_{A}(\mathcal{H})$ is a subalgebra of $\mathcal{B}(\mathcal{H})$ which is neither closed nor dense in $\mathcal{B}(\mathcal{H})$. In addition, we have $\mathcal{B}_{A}(\mathcal{H}) \subseteq \mathcal{B}_{A^{1/2}}(\mathcal{H}))$ (see \cite[Proposition 1.2.]{acg3}).

Let $T\in\mathcal{B}_{A}(\mathcal{H})$. The Douglas solution of the equation $AX = T^*A$ is a distinguished $A$-adjoint operator of $T$, which is denoted by $T^{\sharp_A}$. Note that, $T^{\sharp_A} = A^{\dag}T^*A$ in which $A^{\dag}$ is denoted to be the Moore-Penrose inverse of $A$. It is important to mention that if $T\in\mathcal{B}_{A}(\mathcal{H})$, then $T^{\sharp_A}\in\mathcal{B}_{A}(\mathcal{H})$, $\|T^{\sharp_A}\|_A=\|T\|_A$ and $(T^{\sharp_A})^{\sharp_A} = P_ATP_A$. Here, $P_A$ denotes the orthogonal projection onto $\overline{\mathcal{R}(A)}$. Furthermore, if $T, S\in\mathcal{B}_{A}(\mathcal{H})$, then $(TS)^{\sharp_A} = S^{\sharp_A}T^{\sharp_A}$. In addition, an operator $U\in  \mathcal{B}_A(\mathcal{H})$ is said to be $A$-unitary if $U^{\sharp_A} U=(U^{\sharp_A})^{\sharp_A} U^{\sharp_A}=P_A$. For proofs and more facts about this class of operators, the reader is invited to consult \cite{acg1,acg2,bakfeki01,bakfeki04} and their references.

Recently, many results covering some classes of operators on a complex Hilbert space $\big(\mathcal{H}, \langle \cdot\mid \cdot\rangle\big)$
are extended to $\big(\mathcal{H}, {\langle \cdot\mid \cdot\rangle}_A\big)$ (see, e.g., \cite{zamani2,bakfeki01,bakfeki04,zamani1,majsecesuci}).

The remainder of this paper is organized as follows. Section \ref{s2} is meant to establish several results governing $r_A(\cdot)$. Some of the obtained results will be a natural generalization of the well-known case $A=I$ and extend the works of F. Kittaneh et al. \cite{Ki,A.K.2,BBP,dfilot}.

In section \ref{s3} we consider the power series $f\left( z\right) =\sum_{n=0}^{\infty }c_{n}z^{n}$ with complex coefficients and $f_{c}\left( z\right) :=\sum_{n=0}^{\infty }\left\vert c_{n}\right\vert z^{n}$. Obviously, $f$ and $f_{c}$ have the same radius of convergence and if $c_{n}\geq 0,$ for all $n\in \mathbb{N}^*$, then $f_{c}=f$. The main target of this section in to establish, under some conditions on $A$ and $T$, a relation between $r_A[f(T)]$ and $f_c[r_A(T)]$. The obtained results cover the work of S. S. Dragomir \cite{dfilot}.

\section{$A$-spectral radius inequalities}\label{s2}
In this section, we will prove several inequalities related to $r_A(T)$ when $T$ is an $A$-bounded operator. In all what follows, we consider the Hilbert space $\mathbb{H}=\oplus_{i=1}^d\mathcal{H}$ equipped with the following inner-product:
$$\langle x, y\rangle=\sum_{k=1}^d\langle x_k\mid y_k\rangle,$$
 for all $x=(x_1,\cdots,x_d)\in \mathbb{H}$ and $y=(y_1,\cdots,y_d)\in \mathbb{H}$. Let $\mathbb{A}$ be a ${d\times d}$ operator diagonal matrix with diagonal entries are the positive operator $A$, i.e.
 \begin{equation*}
\mathbb{A}=\begin{pmatrix}A & 0 &\cdots& 0\\
0& A &\cdots& 0\\
\vdots & \vdots & \vdots & \vdots\\
0 & 0 &\cdots& A
\end{pmatrix}.
\end{equation*}
Clearly, $\mathbb{A}\in \mathcal{B}(\mathbb{H})^+$. So, the semi-inner product induced by $\mathbb{A}$ is given by
$$\langle x, y\rangle_{\mathbb{A}}= \langle \mathbb{A}x, y\rangle=\sum_{k=1}^d\langle Ax_k\mid y_k\rangle=\sum_{k=1}^d\langle x_k\mid y_k\rangle_A,$$
 for all $x=(x_1,\cdots,x_d)\in \mathbb{H}$ and $y=(y_1,\cdots,y_d)\in \mathbb{H}$.

In order to prove our first main result in this section, we need the following lemma.
\begin{lemma}\label{lemmajdid}
Let $\mathbb{T}= (T_{ij})_{d \times d}$ be such that $T_{ij}\in \mathcal{B}_{A^{1/2}}(\mathcal{H})$ for all $i,j$. Then, $\mathbb{T}\in\mathcal{B}_{\mathbb{A}^{1/2}}(\mathbb{H})$. Moreover, we have
\begin{equation}\label{tag0}
\|\mathbb{T}\|_{\mathbb{A}}\leq \|\widehat{\mathbb{T}}^{\mathbb{A}}\|,
\end{equation}
where $\widehat{\mathbb{T}}^{\mathbb{A}}=(\|T_{ij} \|_A)_{d\times d}$.
\end{lemma}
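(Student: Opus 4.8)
The plan is to first verify that $\mathbb{T}$ is $\mathbb{A}$-bounded, and then obtain the norm estimate by reducing the action of $\mathbb{T}$ on $\mathbb{H}$ to the action of the nonnegative matrix $\widehat{\mathbb{T}}^{\mathbb{A}}$ on $\mathbb{R}^d$ (or $\mathbb{C}^d$). For the first part, take $x=(x_1,\dots,x_d)\in\mathbb{H}$ and compute
\begin{equation*}
\|\mathbb{T}x\|_{\mathbb{A}}^2=\sum_{i=1}^d\Big\|\sum_{j=1}^d T_{ij}x_j\Big\|_A^2.
\end{equation*}
Using the triangle inequality for $\|\cdot\|_A$ on each inner sum together with \eqref{semiiineq}, each term is bounded by $\big(\sum_{j}\|T_{ij}\|_A\|x_j\|_A\big)^2$, which is finite; summing over $i$ shows $\|\mathbb{T}x\|_{\mathbb{A}}<\infty$ for every $x$, and a uniform bound (obtained by completing the estimate below) gives $\mathbb{T}\in\mathcal{B}_{\mathbb{A}^{1/2}}(\mathbb{H})$ via the characterization \eqref{abbbbbbbb}.

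For the norm inequality, the key observation is that if we set $\xi=(\|x_1\|_A,\dots,\|x_d\|_A)\in\mathbb{C}^d$, then the chain above reads
\begin{equation*}
\|\mathbb{T}x\|_{\mathbb{A}}^2\le\sum_{i=1}^d\Big(\sum_{j=1}^d\|T_{ij}\|_A\,\|x_j\|_A\Big)^2=\big\|\widehat{\mathbb{T}}^{\mathbb{A}}\,\xi\big\|_{\mathbb{C}^d}^2\le\big\|\widehat{\mathbb{T}}^{\mathbb{A}}\big\|^2\,\|\xi\|_{\mathbb{C}^d}^2.
\end{equation*}
Since $\|\xi\|_{\mathbb{C}^d}^2=\sum_{k=1}^d\|x_k\|_A^2=\|x\|_{\mathbb{A}}^2$, this yields $\|\mathbb{T}x\|_{\mathbb{A}}\le\|\widehat{\mathbb{T}}^{\mathbb{A}}\|\,\|x\|_{\mathbb{A}}$ for all $x\in\mathbb{H}$, and taking the supremum over $\|x\|_{\mathbb{A}}=1$ in the definition \eqref{aseminorm} gives \eqref{tag0}. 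The same estimate with an arbitrary $x$ already establishes the boundedness claimed in the first part, so the two parts are proved simultaneously.

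The only point requiring a little care is the passage from $\sum_i\big(\sum_j\|T_{ij}\|_A\|x_j\|_A\big)^2$ to $\|\widehat{\mathbb{T}}^{\mathbb{A}}\xi\|^2$: one must note that the entries $\|T_{ij}\|_A$ and the components $\|x_j\|_A$ are all nonnegative reals, so that $\sum_j\|T_{ij}\|_A\|x_j\|_A$ is exactly the $i$-th component of the vector $\widehat{\mathbb{T}}^{\mathbb{A}}\xi$, and hence the sum of their squares is genuinely $\|\widehat{\mathbb{T}}^{\mathbb{A}}\xi\|_{\mathbb{C}^d}^2$ with no loss. I expect this bookkeeping, rather than any analytic difficulty, to be the main (and only mild) obstacle; everything else is the triangle inequality, \eqref{semiiineq}, and the definition of the operator norm on $\mathbb{M}_d(\mathbb{C})$.
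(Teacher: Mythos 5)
Your proposal is correct and follows essentially the same route as the paper: expand $\|\mathbb{T}x\|_{\mathbb{A}}^2$ componentwise, apply the triangle inequality and \eqref{semiiineq} to reduce to the action of the nonnegative matrix $\widehat{\mathbb{T}}^{\mathbb{A}}$ on the vector $(\|x_1\|_A,\dots,\|x_d\|_A)$, and conclude via the operator norm on $\mathbb{M}_d(\mathbb{C})$. The only (harmless) difference is that the paper first establishes $\mathbb{T}\in\mathcal{B}_{\mathbb{A}^{1/2}}(\mathbb{H})$ by a separate cruder bound with constants $\lambda_{ij}$ and Cauchy--Schwarz before deriving \eqref{tag0}, whereas you observe that the final estimate already yields both claims at once.
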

\begin{proof}
 Let $x=(x_1,\cdots,x_d)\in \mathbb{H}$. It can be seen that
\begin{align}\label{jdjjj}
\|\mathbb{T}x\|_{\mathbb{A}}^2
&= \|\mathbb{A}^{1/2}\mathbb{T}x\|^2\nonumber\\
&= \sum_{k=1}^d\left\|\sum_{j=1}^dT_{kj}x_j\right\|_A^2\nonumber\\
&\leq\sum_{k=1}^d\left(\sum_{j=1}^d\|T_{kj}x_j\|_A\right)^2.
\end{align}
On the other hand, since $T_{ij}\in \mathcal{B}_{A^{1/2}}(\mathcal{H})$ for all $i,j$, then by \eqref{abbbbbbbb} there exists $\lambda_{ij} > 0$ such that
\begin{equation}\label{loula}
\|T_{ij}x\|_A\leq \lambda_{ij} \left\|x\right\|_A,
\end{equation}
for all $x\in \mathcal{H}$ and $i,j\in\{1,\cdots d\}$. So, by using \eqref{loula} and the Cauchy-Shwarz inequality we get
\begin{align*}
\|\mathbb{T}x\|_{\mathbb{A}}^2
&\leq\sum_{k=1}^d\left(\sum_{j=1}^d\lambda_{kj} \left\|x_j\right\|_A\right)^2\\
&\leq d(\max_{k,j}\lambda_{kj})\left(\sum_{j=1}^d \left\|x_j\right\|_A\right)^2\\
&\leq d^2(\max_{k,j}\lambda_{kj})\sum_{j=1}^d \left\|x_j\right\|_A^2=\lambda^2\|x\|_\mathbb{A}^2,
\end{align*}
where $\lambda:=d\sqrt{\max_{k,j}\lambda_{kj}}$. Hence, by \eqref{abbbbbbbb}, we infer that $\mathbb{T}\in\mathcal{B}_{\mathbb{A}^{1/2}}(\mathbb{H})$. So, by using \eqref{aseminorm}, we have
$$\|\mathbb{T}\|_{\mathbb{A}}=\sup\{\|\mathbb{T}x\|_{\mathbb{A}},\;x\in \mathbb{H}, \|x\|_{\mathbb{A}}=1\}.$$
In order to prove \eqref{tag0}, it suffices to show that
\begin{equation}\label{tttag0}
\|\mathbb{T}x\|_{\mathbb{A}}\leq \|\widehat{\mathbb{T}}^{\mathbb{A}}\|\, \|x\|_{\mathbb{A}},\;\forall\,x\in \mathbb{H}.
\end{equation}
Let $x=(x_1,\cdots,x_d)\in \mathbb{H}$. Let $\widehat{x}^{A}$ denote $(\|x_1\|_A,\cdots,\|x_d\|_A) \in \Bbb R^d$. Notice that $\|\widehat{x}^{A}\|_{\mathbb{A}} = \|x\|_{\mathbb{A}}$. By using \eqref{semiiineq} and \eqref{jdjjj}, one can see that
\begin{align*}
\|\mathbb{T}x\|_{\mathbb{A}}^2
&\leq\sum_{k=1}^d\left(\sum_{j=1}^d\|T_{kj}x_j\|_A\right)^2\\
&\leq\sum_{k=1}^d\left(\sum_{j=1}^d\|T_{kj}\|_A\|x_j\|_A\right)^2\\
&=\left\|\widehat{\mathbb{T}}^{\mathbb{A}} \widehat{x}^{A}\right\|^2 \\
&\leq \|\widehat{\mathbb{T}}^{\mathbb{A}}\|^2\|\widehat{x}^{A}\|^2 = \|\widehat{\mathbb{T}}^{\mathbb{A}}\|^2 \| x\|_{\mathbb{A}}^2.
\end{align*}
Hence, \eqref{tttag0} is proved and thus the proof is complete.
\end{proof}

Now, we are in a position to prove our first main result in this section.
\begin{theorem}\label{main222222}
Let $\mathbb{T}= (T_{ij})_{d \times d}$ be a $d \times d$ operator matrix be such that $T_{ij}\in \mathcal{B}_{A^{1/2}}(\mathcal{H})$ for all $i,j$ and $\widehat{\mathbb{T}}^{\mathbb{A}}=(\|T_{ij} \|_A)_{d\times d}$. Then, $r_{\mathbb{A}}(\mathbb{T})\leq r(\widehat{\mathbb{T}}^{\mathbb{A}}).$ That is
\begin{equation}\label{kk2020}
r_{\mathbb{A}}\left[ \begin{pmatrix}
T_{11} & T_{12} &\cdots& T_{1d}\\
T_{21} & T_{22} &\cdots& T_{2d}\\
\vdots & \vdots & \vdots & \vdots\\
T_{d1} & T_{d2} &\cdots& T_{dd}\\
\end{pmatrix}\right] \leq
r \left[\begin{pmatrix}
\|T_{11}\|_A & \|T_{12}\|_A &\cdots& \|T_{1d}\|_A\\
\|T_{21}\|_A & \|T_{22}\|_A &\cdots& \|T_{2d}\|_A\\
\vdots & \vdots & \vdots & \vdots\\
\|T_{d1}\|_A & \|T_{d2}\|_A &\cdots& \|T_{dd}\|_A\\
\end{pmatrix} \right].
\end{equation}
\end{theorem}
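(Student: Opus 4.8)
The plan is to reduce the $A$-spectral radius of the operator matrix $\mathbb{T}$ to the ordinary spectral radius of the entrywise-norm matrix $\widehat{\mathbb{T}}^{\mathbb{A}}$ by comparing powers. The starting point is \Cref{lemmajdid}, which already gives $\|\mathbb{S}\|_{\mathbb{A}}\leq\|\widehat{\mathbb{S}}^{\mathbb{A}}\|$ for any $d\times d$ matrix $\mathbb{S}$ with entries in $\mathcal{B}_{A^{1/2}}(\mathcal{H})$; I would apply this not to $\mathbb{T}$ itself but to $\mathbb{T}^n$.

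First I would observe that $\mathbb{T}^n=(T^{(n)}_{ij})_{d\times d}$, where $T^{(n)}_{ij}=\sum_{k_1,\dots,k_{n-1}}T_{ik_1}T_{k_1k_2}\cdots T_{k_{n-1}j}$, and that by \eqref{semiiineq2} and the triangle inequality for $\|\cdot\|_A$ we have
\begin{equation*}
\|T^{(n)}_{ij}\|_A\leq\sum_{k_1,\dots,k_{n-1}}\|T_{ik_1}\|_A\|T_{k_1k_2}\|_A\cdots\|T_{k_{n-1}j}\|_A=\big[(\widehat{\mathbb{T}}^{\mathbb{A}})^n\big]_{ij}.
\end{equation*}
In other words, the entrywise-norm matrix of $\mathbb{T}^n$ is dominated entrywise (and all entries are nonnegative) by $(\widehat{\mathbb{T}}^{\mathbb{A}})^n$. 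Next I would invoke \Cref{lemmajdid} applied to $\mathbb{T}^n$ together with the standard fact that for nonnegative matrices $0\leq B\leq C$ (entrywise) one has $\|B\|\leq\|C\|$ in the spectral norm (since $\|B\|=\||B|\,\|$ and monotonicity of the norm under entrywise domination of nonnegative matrices, or equivalently via $\|B\|\leq\||\,|B|\,|\|$ and Perron–Frobenius-type monotonicity). This yields
\begin{equation*}
\|\mathbb{T}^n\|_{\mathbb{A}}\leq\big\|\widehat{\mathbb{T}^n}^{\mathbb{A}}\big\|\leq\big\|(\widehat{\mathbb{T}}^{\mathbb{A}})^n\big\|.
\end{equation*}

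Finally I would take $n$-th roots and let $n\to\infty$: by the definition \eqref{newrad} of $r_{\mathbb{A}}$ and the ordinary spectral radius formula (Gelfand's formula), $\|\mathbb{T}^n\|_{\mathbb{A}}^{1/n}\to r_{\mathbb{A}}(\mathbb{T})$ and $\|(\widehat{\mathbb{T}}^{\mathbb{A}})^n\|^{1/n}\to r(\widehat{\mathbb{T}}^{\mathbb{A}})$, so the chain of inequalities gives $r_{\mathbb{A}}(\mathbb{T})\leq r(\widehat{\mathbb{T}}^{\mathbb{A}})$, which is exactly \eqref{kk2020}. One technical point worth checking before taking limits is that $\mathbb{T}\in\mathcal{B}_{\mathbb{A}^{1/2}}(\mathbb{H})$ (so that $r_{\mathbb{A}}(\mathbb{T})$ is defined and \eqref{newrad} applies) and that $\mathcal{B}_{\mathbb{A}^{1/2}}(\mathbb{H})$ is an algebra, so that $\mathbb{T}^n$ lies in it as well; both follow from \Cref{lemmajdid} and \eqref{semiiineq2}.

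The step I expect to be the only real obstacle — and it is a minor one — is the entrywise monotonicity of the spectral norm for nonnegative matrices, i.e. justifying $\|B\|\leq\|C\|$ when $0\leq B\leq C$ entrywise. This is classical: for any $x$ one has $|Bx|\leq B|x|\leq C|x|$ entrywise, hence $\|Bx\|_2\leq\|C|x|\|_2\leq\|C\|\,\|x\|_2$. I would state this as a short remark or cite it, since everything else in the argument is a routine combination of \Cref{lemmajdid}, submultiplicativity \eqref{semiiineq2}, and the two Gelfand-type limit formulas.
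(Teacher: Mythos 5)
Your proof is correct and follows essentially the same route as the paper's: the paper also applies \Cref{lemmajdid} to $\mathbb{T}^n$, establishes the entrywise domination $\widehat{\mathbb{T}^n}^{\mathbb{A}}\leq(\widehat{\mathbb{T}}^{\mathbb{A}})^n$ (via induction on the two-factor inequality $\|\widehat{\mathbb{T}\mathbb{S}}^{\mathbb{A}}\|\leq\|\widehat{\mathbb{T}}^{\mathbb{A}}\|\,\|\widehat{\mathbb{S}}^{\mathbb{A}}\|$ rather than your explicit expansion of $\mathbb{T}^n$, a cosmetic difference), invokes norm monotonicity for nonnegative matrices, and concludes with the two Gelfand-type limits. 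Your extra care in justifying the monotonicity fact and in checking $\mathbb{T}^n\in\mathcal{B}_{\mathbb{A}^{1/2}}(\mathbb{H})$ is sound and, if anything, slightly more complete than the paper's write-up.
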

\begin{proof}
Notice, in general, that for operators $\mathbb{T}= (T_{ij})_{d \times d}$ and $\mathbb{S}= (S_{ij})_{d \times d}$ such that $T_{ij}\in \mathcal{B}_{A^{1/2}}(\mathcal{H})$ for all $i,j$ and $S_{ij}\in \mathcal{B}_{A^{1/2}}(\mathcal{H})$ for all $i,j$ respectively, we have
\begin{equation}\label{tag00001}
\|\widehat{\mathbb{T}\mathbb{S}}^{\mathbb{A}}\|\leq \|\widehat{\mathbb{T}}^{\mathbb{A}}\|\|\widehat{\mathbb{S}}^{\mathbb{A}}\|.
\end{equation}
Indeed, we have
$$
\left(\widehat{\mathbb{T}\mathbb{S}}^{\mathbb{A}}\right)_{kj}=\left\|\sum_{\ell=1}^d T_{k\ell} S_{\ell j} \right\|_A\leq \sum_{\ell=1}^d \|T_{k\ell} \|_A\,\|S_{\ell j}\|_A=\left(\widehat{\mathbb{T}}^{\mathbb{A}}\widehat{\mathbb{S}}^{\mathbb{A}}\right)_{kj},
$$
for all $k,j$. Therefore, by the norm monotonicity of matrices with nonnegative entries, we see that
$$\|\widehat{\mathbb{T}\mathbb{S}}^{\mathbb{A}}\|\leq \|\widehat{\mathbb{T}}^{\mathbb{A}}\widehat{\mathbb{S}}^{\mathbb{A}}\|.$$
This shows \eqref{tag00001} since $\|\widehat{\mathbb{T}}^{\mathbb{A}}\widehat{\mathbb{S}}^{\mathbb{A}}\|\leq \|\widehat{\mathbb{T}}^{\mathbb{A}}\|\,\|\widehat{\mathbb{S}}^{\mathbb{A}}\|$.

Now, by using \eqref{tag0} together with \eqref{tag00001} and an induction argument, we get
 \begin{equation*}
\|\mathbb{T}^n\|_\mathbb{A}\leq \left\|\widehat{\mathbb{T}^n}^\mathbb{A}\right\|\leq \left\|\left(\widehat{\mathbb{T}}^\mathbb{A} \right)^n\right\|,
 \end{equation*}
for all $n\in\mathbb N^*$. Thus, by using \eqref{newrad} we obtain
$$r_{\mathbb{A}}(\mathbb{T})=\lim_{n\to \infty}\|\mathbb{T}^n\|_\mathbb{A}^{1/n}\leq \lim_{n\to \infty}\left\|\left(\widehat{\mathbb{T}}^\mathbb{A} \right)^n\right\|^{1/n}=r(\widehat{\mathbb{T}}^{\mathbb{A}}).$$
Therefore, we get \eqref{kk2020} as desired.
\end{proof}

Our second result in this section reads as follows.
\begin{theorem}\label{mainth1}
Let $T_1,T_2,S_1,S_2\in \mathcal{B}_{A^{1/2}}(\mathcal{H})$. Then,
\begin{align}\label{eq1}
 &r_A\left(T_1S_1+T_2S_2\right)\\
& \leq \frac{1}{2}\left[ \left\Vert S_1T_1\right\Vert_A +\left\Vert S_2T_2\right\Vert_A +%
\sqrt{\left( \left\Vert S_1T_1\right\Vert_A -\left\Vert S_2T_2\right\Vert_A \right)
^{2}+4\left\Vert S_1T_2\right\Vert_A \left\Vert S_2T_1\right\Vert_A}\right].\nonumber
\end{align}%
\end{theorem}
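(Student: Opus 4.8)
The plan is to reduce the estimate on $r_A(T_1S_1+T_2S_2)$ to an application of Theorem~\ref{main222222} for a suitably chosen $2\times 2$ operator matrix, exploiting the commutativity property \eqref{commut} of $r_A(\cdot)$. First I would observe that $T_1S_1+T_2S_2$ can be realized as an entry of a product of two $2\times 2$ operator matrices over $\mathcal{B}_{A^{1/2}}(\mathcal{H})$. Indeed, set
\begin{equation*}
\mathbb{X}=\begin{pmatrix} T_1 & T_2\\ 0 & 0\end{pmatrix},\qquad
\mathbb{Y}=\begin{pmatrix} S_1 & 0\\ S_2 & 0\end{pmatrix}.
\end{equation*}
Then $\mathbb{X}\mathbb{Y}=\begin{pmatrix} T_1S_1+T_2S_2 & 0\\ 0 & 0\end{pmatrix}$, so $r_{\mathbb{A}}(\mathbb{X}\mathbb{Y})=r_A(T_1S_1+T_2S_2)$ (the $A$-spectral radius of a block-diagonal matrix is the maximum of the radii of its diagonal blocks; this follows from \eqref{newrad} applied entrywise, or can be argued directly). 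By the commutativity property \eqref{commut}, $r_{\mathbb{A}}(\mathbb{X}\mathbb{Y})=r_{\mathbb{A}}(\mathbb{Y}\mathbb{X})$, and
\begin{equation*}
\mathbb{Y}\mathbb{X}=\begin{pmatrix} S_1T_1 & S_1T_2\\ S_2T_1 & S_2T_2\end{pmatrix}.
\end{equation*}

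Next I would apply Theorem~\ref{main222222} to $\mathbb{Y}\mathbb{X}$, which gives
\begin{equation*}
r_A(T_1S_1+T_2S_2)=r_{\mathbb{A}}(\mathbb{Y}\mathbb{X})\leq r\!\left(\begin{pmatrix}\|S_1T_1\|_A & \|S_1T_2\|_A\\ \|S_2T_1\|_A & \|S_2T_2\|_A\end{pmatrix}\right).
\end{equation*}
It then remains to compute the classical spectral radius of the $2\times 2$ nonnegative matrix $M=\begin{pmatrix} a & b\\ c & d\end{pmatrix}$ with $a=\|S_1T_1\|_A$, $b=\|S_1T_2\|_A$, $c=\|S_2T_1\|_A$, $d=\|S_2T_2\|_A$. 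Its eigenvalues are $\tfrac12\big[(a+d)\pm\sqrt{(a-d)^2+4bc}\big]$, and since $a,b,c,d\ge 0$ the larger eigenvalue is the one with the plus sign and it is nonnegative, hence equals $r(M)$. Substituting the values of $a,b,c,d$ yields exactly the right-hand side of \eqref{eq1}.

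The only genuinely non-routine point is the claim that $r_{\mathbb{A}}$ of a block-diagonal operator matrix equals the maximum of the $A$-spectral radii of the diagonal blocks — here specifically that $r_{\mathbb{A}}\!\begin{pmatrix} R & 0\\ 0& 0\end{pmatrix}=r_A(R)$. I would justify this by noting that $\begin{pmatrix} R & 0\\ 0& 0\end{pmatrix}^n=\begin{pmatrix} R^n & 0\\ 0& 0\end{pmatrix}$ and that, for a block-diagonal $\mathbb{A}$-bounded operator, $\big\|\begin{pmatrix} R^n & 0\\ 0& 0\end{pmatrix}\big\|_{\mathbb{A}}=\|R^n\|_A$ directly from the definition \eqref{aseminorm} of the seminorm and the block-diagonal structure of $\mathbb{A}$; taking $n$-th roots and letting $n\to\infty$ via \eqref{newrad} gives the equality. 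Everything else — the membership of $\mathbb{X},\mathbb{Y},\mathbb{X}\mathbb{Y},\mathbb{Y}\mathbb{X}$ in $\mathcal{B}_{\mathbb{A}^{1/2}}(\mathbb{H})$, the block multiplications, and the $2\times 2$ eigenvalue computation — is routine and follows from Lemma~\ref{lemmajdid}, \eqref{commut}, and elementary algebra.
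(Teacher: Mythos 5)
Your proposal is correct and follows essentially the same route as the paper: the same factorization $\mathbb{X}\mathbb{Y}$ with $\mathbb{X}=\bigl(\begin{smallmatrix} T_1 & T_2\\ 0 & 0\end{smallmatrix}\bigr)$, $\mathbb{Y}=\bigl(\begin{smallmatrix} S_1 & 0\\ S_2 & 0\end{smallmatrix}\bigr)$, the same use of the commutativity property \eqref{commut} and of Theorem~\ref{main222222}, and the same closing computation of the spectral radius of the resulting nonnegative $2\times 2$ matrix. Your justification of $r_{\mathbb{A}}\bigl[\bigl(\begin{smallmatrix} R & 0\\ 0 & 0\end{smallmatrix}\bigr)\bigr]=r_A(R)$ via the norm identity for powers is exactly the paper's argument for \eqref{semmmmmmmm000}.
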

\begin{proof}
Notice first that, in general, for $T\in \mathcal{B}_{A^{1/2}}(\mathcal{H})$ and $\mathbb{A}=\begin{pmatrix}
A &0\\
0 &A
\end{pmatrix}$, we have
\begin{equation}\label{semmmmmmmm000}
\left\| \begin{pmatrix}
T &0\\
0 &0
\end{pmatrix}\right\|_{\mathbb{A}}=\|T\|_A.
\end{equation}
Indeed, it is not difficult to observe that
$$\left\| \begin{pmatrix}
T &0\\
0 &0
\end{pmatrix}\begin{pmatrix} x \\y\end{pmatrix}\right\|_{\mathbb{A}}=\|Tx\|_A,$$
for all $(x,y)\in \mathcal{H}\oplus \mathcal{H}$. So, we get \eqref{semmmmmmmm000} by taking the supremum over all $(x,y)\in \mathcal{H}\oplus \mathcal{H}$ with $\|x\|_A^2+\|y\|_A^2=1$ and using \eqref{aseminorm}.

Now, let $\mathbb{A}=\begin{pmatrix}
A &0\\
0 &A
\end{pmatrix}$. By using \eqref{semmmmmmmm000} together with \eqref{newrad} we see that
\begin{align*}
r_A\left(T_1S_1+T_2S_2\right)
& =r_{\mathbb{A}}\left[ \begin{pmatrix}
T_1S_1+T_2S_2 &0\\
0 &0
\end{pmatrix}\right] \\
 &=r_{\mathbb{A}}\left[ \begin{pmatrix}
T_1 &T_2\\
0 &0
\end{pmatrix} \begin{pmatrix}
S_1&0\\
S_2 &0
\end{pmatrix}\right]\\
 &=r_{\mathbb{A}}\left[\begin{pmatrix}
S_1&0\\
S_2 &0
\end{pmatrix} \begin{pmatrix}
T_1 &T_2\\
0 &0
\end{pmatrix} \right] \quad (\text{ by }\; \eqref{commut})\\
& =r_{\mathbb{A}}\left[ \begin{pmatrix}
S_1T_1 &S_1T_2\\
S_2T_1 &S_2T_2
\end{pmatrix}\right] \\
& \leq r\left[ \begin{pmatrix}
\|S_1T_1\|_A &\|S_1T_2\|_A\\
\|S_2T_1\|_A &\|S_2T_2\|_A
\end{pmatrix}\right] \quad (\text{ by Theorem }\ref{main222222})\\
&=\frac{1}{2}\Big( \left\Vert S_1T_1\right\Vert_A +\left\Vert S_2T_2\right\Vert_A \\
&\quad\quad\quad+\sqrt{\left( \left\Vert S_1T_1\right\Vert_A -\left\Vert S_2T_2\right\Vert_A \right)
^{2}+4\left\Vert S_1T_2\right\Vert_A \left\Vert S_2T_1\right\Vert_A }\Big)
\end{align*}
\end{proof}

\begin{corollary}\label{commmmm1}
Let $T,S\in \mathcal{B}_{A^{1/2}}(\mathcal{H})$. Then, we have
\begin{align*}
& r_A\left(TS\pm ST\right)\\
& \leq \frac{1}{2}\left( \left\Vert TS\right\Vert_A +\left\Vert ST\right\Vert_A +%
\sqrt{\left( \left\Vert TS\right\Vert_A -\left\Vert ST\right\Vert_A \right)
^{2}+4\left\Vert T^{2}\right\Vert_A \left\Vert S^{2}\right\Vert_A }\right)
\notag
\end{align*}%
\end{corollary}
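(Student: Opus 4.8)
The plan is to obtain both inequalities as immediate specializations of Theorem \ref{mainth1}, whose conclusion \eqref{eq1} bounds $r_A(T_1S_1+T_2S_2)$ in terms of the four seminorms $\|S_1T_1\|_A$, $\|S_2T_2\|_A$, $\|S_1T_2\|_A$, $\|S_2T_1\|_A$. So the entire task is to recognize $TS\pm ST$ as a sum $T_1S_1+T_2S_2$ for a well-chosen quadruple of $A$-bounded operators. This is legitimate because $\mathcal{B}_{A^{1/2}}(\mathcal{H})$ is a subspace of $\mathcal{B}(\mathcal{H})$ (closure under linear combinations is clear from the characterization \eqref{abbbbbbbb}), so every operator formed below is again in $\mathcal{B}_{A^{1/2}}(\mathcal{H})$ and Theorem \ref{mainth1} genuinely applies.

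For the ``$+$'' case I would take $T_1=T$, $S_1=S$, $T_2=S$, $S_2=T$, so that $T_1S_1+T_2S_2=TS+ST$. Then $S_1T_1=ST$, $S_2T_2=TS$, $S_1T_2=S^2$, and $S_2T_1=T^2$, and substituting these into \eqref{eq1} gives
$$r_A(TS+ST)\leq\tfrac{1}{2}\Big(\|ST\|_A+\|TS\|_A+\sqrt{(\|ST\|_A-\|TS\|_A)^2+4\|S^2\|_A\,\|T^2\|_A}\,\Big),$$
which is precisely the asserted bound after the harmless reorderings $(\alpha-\beta)^2=(\beta-\alpha)^2$ and $\alpha\beta=\beta\alpha$ for the nonnegative reals involved.

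For the ``$-$'' case I would instead take $T_1=T$, $S_1=S$, $T_2=S$, $S_2=-T$, so that $T_1S_1+T_2S_2=TS-ST$. Now $\|S_1T_1\|_A=\|ST\|_A$, $\|S_2T_2\|_A=\|-TS\|_A=\|TS\|_A$, $\|S_1T_2\|_A=\|S^2\|_A$, and $\|S_2T_1\|_A=\|-T^2\|_A=\|T^2\|_A$, where I only use that $\|\cdot\|_A$ is a seminorm, hence $\|-X\|_A=\|X\|_A$ (immediate from \eqref{aseminorm}). Feeding these into \eqref{eq1} yields the same right-hand side, and the two cases together complete the proof.

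Since everything reduces to a substitution, I do not anticipate any real obstacle; the only minor point to watch is choosing the signs consistently in the ``$-$'' case and recalling that $A$-boundedness is preserved under linear combinations so that the hypotheses of Theorem \ref{mainth1} are met.
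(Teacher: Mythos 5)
Your proof is correct and takes essentially the same route as the paper, which likewise obtains the corollary by specializing Theorem \ref{mainth1} with $T_1=S_2=T$, $S_1=S$, $T_2=\pm S$; your placing the sign on $S_2$ instead of $T_2$ in the ``$-$'' case is an immaterial variation. Nothing further is needed.
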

\begin{proof}
By letting $T_1=S_2=T$, $S_1=S$ and $T_2=\pm S$ in Theorem \ref{mainth1} we get the desired result.
\end{proof}
Notice that Corollary \ref{commmmm1} provides an upper bound for the $A$-spectral radius of the commutator $TS-ST$.
\begin{corollary}\label{corfinal}
Let $U\in \mathcal{B}_{A}(\mathcal{H})$ be an $A$-unitary operator and $T\in \mathcal{B}_{A^{1/2}}(\mathcal{H})$. Then, we have
\begin{align*}
 r_A\left(TU\pm UT\right)\leq  \left\Vert T\right\Vert_A +\left\Vert T^2\right\Vert_A^{1/2}.
\end{align*}%
\end{corollary}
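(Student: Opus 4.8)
The plan is to derive Corollary \ref{corfinal} from Corollary \ref{commmmm1} by taking $S=U$, an $A$-unitary operator, and then simplifying the right-hand side using the special norm identities available for $A$-unitaries. Setting $S=U$ in Corollary \ref{commmmm1} gives
\[
r_A(TU\pm UT)\leq \tfrac{1}{2}\left(\|TU\|_A+\|UT\|_A+\sqrt{\big(\|TU\|_A-\|UT\|_A\big)^2+4\|T^2\|_A\,\|U^2\|_A}\right),
\]
so everything reduces to computing or bounding $\|TU\|_A$, $\|UT\|_A$, and $\|U^2\|_A$.

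First I would record the key fact that for an $A$-unitary $U$ the seminorm is isometric: $\|UT\|_A=\|T\|_A$ and $\|TU\|_A=\|T\|_A$ for every $T\in\mathcal{B}_{A^{1/2}}(\mathcal{H})$. The left identity follows because $\|UTx\|_A^2=\langle U^{\sharp_A}UTx\mid Tx\rangle_A=\langle P_A Tx\mid Tx\rangle_A=\|Tx\|_A^2$ using $U^{\sharp_A}U=P_A$ and the fact that $\langle P_A z\mid z\rangle_A=\|z\|_A^2$ for all $z$; then take the supremum over $\|x\|_A=1$. For the right identity one uses the commutativity property \eqref{commut} together with $\|T^{\sharp_A}\|_A=\|T\|_A$, or argues directly from $(U^{\sharp_A})^{\sharp_A}U^{\sharp_A}=P_A$ which makes $U^{\sharp_A}$ act isometrically on the right; in any case $\|TU\|_A=\|U^{\sharp_A}T^{\sharp_A}\|_A=\|T^{\sharp_A}\|_A=\|T\|_A$, using that $\mathcal{B}_A(\mathcal{H})$ is an algebra so $T^{\sharp_A}\in\mathcal{B}_{A^{1/2}}(\mathcal{H})$ after noting $TU\in\mathcal{B}_{A^{1/2}}(\mathcal{H})$ — actually the cleanest route is to observe $\|TU\|_A=\|(TU)^{\sharp_A}\|_A$ only when $TU\in\mathcal{B}_A(\mathcal{H})$, so I would instead use \eqref{commut}-style reasoning or the submultiplicativity bound as a fallback. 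Similarly $\|U^2\|_A\leq\|U\|_A^2$; and $\|U\|_A=\|U\cdot P_A\|_A\leq 1$ actually $\|U\|_A=1$ since $\|Ux\|_A=\|x\|_A$ on $\overline{\mathcal{R}(A)}$, whence $\|U^2\|_A=1$ (or at least $\leq 1$, which suffices).

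Substituting $\|TU\|_A=\|UT\|_A=\|T\|_A$ and $\|U^2\|_A\leq 1$ into the bound collapses the square-root term: $\big(\|TU\|_A-\|UT\|_A\big)^2=0$, so the right-hand side becomes
\[
\tfrac{1}{2}\left(\|T\|_A+\|T\|_A+\sqrt{0+4\|T^2\|_A\cdot 1}\right)=\|T\|_A+\|T^2\|_A^{1/2},
\]
which is exactly the claimed inequality. I expect the main obstacle to be the careful justification that $\|TU\|_A=\|T\|_A$, since $U^{\sharp_A}$ is only a one-sided object with respect to $P_A$ rather than a genuine inverse; the safe argument is to compute $\|TUx\|_A$ for $x$ and substitute $y=Ux$, noting that $U$ maps the unit sphere of $(\overline{\mathcal{R}(A)},\|\cdot\|_A)$ onto itself (surjectivity onto $\overline{\mathcal{R}(A)}$ follows from $(U^{\sharp_A})^{\sharp_A}U^{\sharp_A}=P_A$, so $U$ has a right inverse modulo $P_A$), giving $\sup_{\|x\|_A=1}\|TUx\|_A=\sup_{\|y\|_A=1}\|Ty\|_A=\|T\|_A$. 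With that identity in hand the rest is the routine substitution above. If one only wants $\|TU\|_A\leq\|T\|_A$ together with $\|UT\|_A=\|T\|_A$, the same simplification still works because the function $t\mapsto\tfrac12(t+\|T\|_A+\sqrt{(t-\|T\|_A)^2+4\|T^2\|_A})$ is increasing in $t$ on $[0,\|T\|_A]$, so replacing $\|TU\|_A$ by its upper bound $\|T\|_A$ only enlarges the estimate.
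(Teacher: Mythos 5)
Your proposal is correct and follows the same overall route as the paper: specialize Corollary \ref{commmmm1} to $S=U$ and collapse the right-hand side using $\|UT\|_A=\|TU\|_A=\|T\|_A$ together with $\|U^2\|_A\le\|U\|_A^2=1$. The only point of divergence is the justification of the nontrivial inequality $\|T\|_A\le\|TU\|_A$. The paper proves it by a duality argument through \eqref{semi2}: it writes $\lambda=|\langle T^{\sharp_A}x\mid y\rangle_A|$, reduces to $P_Ax$, $P_Ay$, inserts $P_A=(U^{\sharp_A})^{\sharp_A}U^{\sharp_A}$, and concludes $\|T^{\sharp_A}\|_A\le\|[U^{\sharp_A}TU]^{\sharp_A}\|_A$ --- an argument that manipulates $T^{\sharp_A}$ and hence really presumes $T\in\mathcal{B}_A(\mathcal{H})$ rather than just $T\in\mathcal{B}_{A^{1/2}}(\mathcal{H})$. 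Your substitution $x=U^{\sharp_A}y$ (so that $\|x\|_A=\|y\|_A$, $Ux$ agrees with $y$ modulo $\mathcal{N}(A)$, and $\|TUx\|_A=\|Ty\|_A$ because $T$ leaves $\mathcal{N}(A)$ invariant) avoids $T^{\sharp_A}$ entirely and is the cleaner argument for merely $A$-bounded $T$; your wording ``$U$ maps the unit sphere onto itself'' should be read modulo $\mathcal{N}(A)$, which is exactly what the seminorm sees. Your fallback via the monotonicity of $t\mapsto\tfrac12\bigl(t+\|T\|_A+\sqrt{(t-\|T\|_A)^2+4\|T^2\|_A}\bigr)$ is even better: it shows the lower bound $\|T\|_A\le\|TU\|_A$ is not needed at all, since the trivial estimate $\|TU\|_A\le\|T\|_A\|U\|_A=\|T\|_A$ from \eqref{semiiineq2} already yields the corollary. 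Either way the conclusion $r_A(TU\pm UT)\le\|T\|_A+\|T^2\|_A^{1/2}$ follows exactly as you compute.
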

\begin{proof}
Notice first that since $U$ is an $A$-unitary operator, then
\begin{equation}
\|Ux\|_A=\|U^{\sharp_A}x\|_A=\|x\|_A.
\end{equation}
This implies, by using \eqref{aseminorm}, that
\begin{equation}\label{makkkk}
\|UT\|_A=\|U^{\sharp_A}T\|_A=\|T\|_A,\;\;\forall\,T\in \mathcal{B}_{A^{1/2}}(\mathcal{H}).
\end{equation}
Now, we will prove that $\|TU\|_A=\|T\|_A$. Clearly, we have
$$\{\|TUx\|_A\,;\;x\in \mathcal{H},\,\|x\|_A \}\subseteq \{\|Ty\|_A\,;\;y\in \mathcal{H},\,\|y\|_A \}.$$
So, by \eqref{aseminorm} we get
$$\|TU\|_A\leq\|T\|_A.$$
On the other hand, let
$$\lambda\in \{|\langle T^{\sharp_A}x\mid y\rangle_A|\,;\;x,y\in \mathcal{H},\,\|x\|_A=\|y\|_A=1 \},$$
 then there exist $x,y\in \mathcal{H}$ such that $\|x\|_A=\|y\|_A=1$ and $\lambda=|\langle T^{\sharp_A} x\mid y\rangle_A|$. Let $x=P_Ax+z_1$ and $y=P_Ay+z_2$ with $z_1,z_2\in \mathcal{N}(A)$. Since $T\in \mathcal{B}_{A^{1/2}}(\mathcal{H})$, then $\mathcal{N}(A)$ is an invariant subspace for each $T$. Hence, we obtain
\begin{align*}
\lambda
&=|\langle x\mid Ty\rangle_A|\\
&=|\langle P_Ax+z_1\mid AT(P_Ay+z_2)\rangle|\\
&=|\langle P_Ax\mid AT(P_Ay+z_2)\rangle|\\
&=|\langle P_Ax\mid AT(P_Ay)\rangle| \\
&=|\langle P_Ax\mid TP_Ay\rangle_A|=|\langle T^{\sharp_A} P_Ax\mid P_Ay\rangle_A|.
\end{align*}
Moreover, since $P_A=({U^{\sharp_A}})^{\sharp_A} U^{\sharp_A}$, it follows that
\begin{align*}
\lambda
&=|\langle T^{\sharp_A} ({U^{\sharp_A}})^{\sharp_A} U^{\sharp_A} x\mid ({U^{\sharp_A}})^{\sharp_A} U^{\sharp_A} y\rangle_A|\\
&=|\langle U^{\sharp_A} T^{\sharp_A} ({U^{\sharp_A}})^{\sharp_A} U^{\sharp_A} x\mid U^{\sharp_A} y\rangle_A|\\
&=|\langle [U^{\sharp_A} T U]^{\sharp_A} U^{\sharp_A} x\mid U^{\sharp_A} y\rangle_A|.
\end{align*}
Hence,
$$\lambda\in \left\{|\langle [U^{\sharp_A} T U]^{\sharp_A} z\mid t\rangle_A|\,;\;z,t\in \mathcal{H},\,\|z\|_A=\|t\|_A=1 \right\}.$$
This yields, by \eqref{semi2}, that $\|T^{\sharp_A}\|_A\leq \|[U^{\sharp_A} T U]^{\sharp_A}\|_A$ which in turn implies that
$$\|T\|_A\leq \|U^{\sharp_A} T U\|_A.$$
So, by \eqref{makkkk} we get $\|T\|_A\leq \|T U\|_A.$ Consequently, we have
\begin{equation}\label{makkkk2}
\|T\|_A=\|T U\|_A.
\end{equation}
Therefore, by letting $S=U$ in Corollary \ref{commmmm1} and then using \eqref{makkkk} together with \eqref{makkkk2} we see that
\begin{align*}
 r_A\left(TU\pm UT\right)\leq  \left\Vert T\right\Vert_A +\left\Vert T^2\right\Vert_A^{1/2}\left\Vert U^2\right\Vert_A^{1/2}.
\end{align*}%
This leads to the desired inequality since $\|U\|_A=1$.
\end{proof}

In order to prove our next result, we need the following lemma.
\begin{lemma}\label{tawtaw}
Let $T,S\in \mathcal{B}_{A^{1/2}}(\mathcal{H})$ and $\mathbb{A}=\begin{pmatrix}A&0\\0&A\end{pmatrix}$. Then, the following assertions hold:
\begin{itemize}
  \item [(1)] $r_{\mathbb{A}}\left[ \begin{pmatrix}
T &0\\
0 &S
\end{pmatrix}\right]=\max\{r_A(T),r_A(S)\}$.
  \item [(2)] $r_{\mathbb{A}}\left[ \begin{pmatrix}
0 &T\\
S &0
\end{pmatrix}\right]=\sqrt{r_A(TS)}$.
\end{itemize}
\end{lemma}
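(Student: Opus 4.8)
The plan is to compute the $\mathbb{A}$-spectral radius of each block operator matrix directly from the limit formula \eqref{newrad}, by identifying the $n$-th power of the block matrix and then applying the seminorm estimate of \cref{lemmajdid} together with the reverse inequality coming from the fact that the seminorm of a block matrix dominates the seminorm of each of its entries. For part (1), I would first observe that for a block-diagonal operator $\mathbb{D}=\operatorname{diag}(T,S)$ one has $\mathbb{D}^n=\operatorname{diag}(T^n,S^n)$. Then I would establish the identity $\|\operatorname{diag}(T^n,S^n)\|_{\mathbb{A}}=\max\{\|T^n\|_A,\|S^n\|_A\}$; the inequality $\leq$ follows since for $x=(x_1,x_2)$ we have $\|\mathbb{D}^n x\|_{\mathbb{A}}^2=\|T^nx_1\|_A^2+\|S^nx_2\|_A^2\leq \max\{\|T^n\|_A^2,\|S^n\|_A^2\}(\|x_1\|_A^2+\|x_2\|_A^2)$, and the reverse inequality $\geq$ follows by testing on vectors of the form $(x_1,0)$ and $(0,x_2)$, exactly as in the derivation of \eqref{semmmmmmmm000}. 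Taking $n$-th roots and letting $n\to\infty$ in $\|\mathbb{D}^n\|_{\mathbb{A}}^{1/n}=\max\{\|T^n\|_A,\|S^n\|_A\}^{1/n}$ and using that $\lim_n\max\{a_n,b_n\}^{1/n}=\max\{\lim_n a_n^{1/n},\lim_n b_n^{1/n}\}$ whenever both limits exist gives $r_{\mathbb{A}}(\mathbb{D})=\max\{r_A(T),r_A(S)\}$.

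For part (2), set $\mathbb{N}=\begin{pmatrix}0&T\\S&0\end{pmatrix}$. The key structural fact is that $\mathbb{N}^2=\begin{pmatrix}TS&0\\0&ST\end{pmatrix}$, and more generally $\mathbb{N}^{2m}=\begin{pmatrix}(TS)^m&0\\0&(ST)^m\end{pmatrix}$ while $\mathbb{N}^{2m+1}=\begin{pmatrix}0&(TS)^mT\\(ST)^mS&0\end{pmatrix}$. Applying part (1) to the even powers gives $\|\mathbb{N}^{2m}\|_{\mathbb{A}}=\max\{\|(TS)^m\|_A,\|(ST)^m\|_A\}$, so
\[
r_{\mathbb{A}}(\mathbb{N})=\lim_{m\to\infty}\|\mathbb{N}^{2m}\|_{\mathbb{A}}^{1/(2m)}
=\Big(\lim_{m\to\infty}\max\{\|(TS)^m\|_A,\|(ST)^m\|_A\}^{1/m}\Big)^{1/2}
=\big(\max\{r_A(TS),r_A(ST)\}\big)^{1/2},
\]
where the first equality uses that the subsequence $\|\mathbb{N}^{2m}\|_{\mathbb{A}}^{1/(2m)}$ of the convergent sequence $\|\mathbb{N}^{n}\|_{\mathbb{A}}^{1/n}$ has the same limit. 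Finally, by the commutativity property \eqref{commut} we have $r_A(TS)=r_A(ST)$, so $\max\{r_A(TS),r_A(ST)\}=r_A(TS)$, yielding $r_{\mathbb{A}}(\mathbb{N})=\sqrt{r_A(TS)}$.

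I would carry this out in the order: (i) verify the power formulas for $\mathbb{D}^n$ and $\mathbb{N}^n$ (a routine induction); (ii) prove the seminorm identity for block-diagonal matrices; (iii) deduce part (1) via $n$-th roots; (iv) deduce part (2) from part (1) applied to $\mathbb{N}^2$. The main obstacle, and the only point requiring care, is justifying that one may pass to $n$-th roots and interchange the limit with the $\max$ in step (iii), and that the even-indexed subsequence limit in step (iv) equals the full limit; both are clean once one recalls that the limit in \eqref{newrad} exists (so every subsequence converges to it) and that $\max\{a_n,b_n\}^{1/n}\to\max\{\alpha,\beta\}$ when $a_n^{1/n}\to\alpha$, $b_n^{1/n}\to\beta$, which follows from the elementary squeeze $\max\{a_n,b_n\}\le a_n+b_n\le 2\max\{a_n,b_n\}$. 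Everything else is bookkeeping with the block structure and \cref{lemmajdid}.
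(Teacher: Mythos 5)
Your proposal is correct and follows essentially the same route as the paper: part (1) via the exact seminorm identity $\left\|\operatorname{diag}(T^n,S^n)\right\|_{\mathbb{A}}=\max\{\|T^n\|_A,\|S^n\|_A\}$ followed by $n$-th roots, and part (2) by squaring the off-diagonal matrix to reduce to the block-diagonal case and invoking \eqref{commut}. The only cosmetic difference is that in part (2) the paper simply cites the power identity $r_{\mathbb{A}}(\mathbb{N}^2)=r_{\mathbb{A}}(\mathbb{N})^2$ from Proposition \ref{22}(3), whereas you re-derive that special case by passing to the even-indexed subsequence of $\|\mathbb{N}^n\|_{\mathbb{A}}^{1/n}$.
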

\begin{proof}
\noindent (1)\;It can observed that
$$\left[ \begin{pmatrix}
T &0\\
0 &S
\end{pmatrix}\right]^n=\begin{pmatrix}
T^n &0\\
0 &S^n
\end{pmatrix},\;\;\forall\,n\in \mathbb{N}^*.$$
Moreover, for every $(x,y)\in \mathcal{H}\oplus \mathcal{H}$ we have
\begin{align*}
\left\|\begin{pmatrix}
T^n &0\\
0 &S^n
\end{pmatrix}\begin{pmatrix}
x \\
y
\end{pmatrix}\right\|_{\mathbb{A}}^2
& =\left \|\begin{pmatrix}
T^nx \\
T^ny
\end{pmatrix}\right\|_{\mathbb{A}}^2\\
 &=\|T^nx\|_A^2+\|S^ny\|_A^2\\
  &\leq \max\left\{\|T^n\|_A^2,\|S^n\|_A^2 \right\} (\|x\|_A^2+\|y\|_A^2).
\end{align*}
This implies, by using \eqref{aseminorm}, that
$$\left\|\begin{pmatrix}
T^n &0\\
0 &S^n
\end{pmatrix}\right\|_{\mathbb{A}} \leq \max\left\{\|T^n\|_A,\|S^n\|_A \right\}.$$
Let $(x,0) \in  \mathcal{H}\oplus \mathcal{H}$ be such that $\|x\|_A =1.$ Then
$$\left\|\begin{pmatrix}
T^n &0\\
0 &S^n
\end{pmatrix}\right\|_{\mathbb{A}}\geq\left\|\begin{pmatrix}
T^n &0\\
0 &S^n
\end{pmatrix}\begin{pmatrix}
x \\
0
\end{pmatrix}\right\|_{\mathbb{A}}=\|T^nx\|_A.$$
So, by taking the supremum over all $x\in \mathcal{H}$ with $\|x\|_A=1$, we get
$$\left\|\begin{pmatrix}
T^n &0\\
0 &S^n
\end{pmatrix}\right\|_{\mathbb{A}}\geq \|T^n\|_A.$$
Similarly, if we take $(0,y) \in \mathcal{H} \oplus \mathcal{H}$ with $\|y\|_A=1$, we get
$$\left\|\begin{pmatrix}
T^n &0\\
0 &S^n
\end{pmatrix}\right\|_{\mathbb{A}}\geq \|S^n\|_A.$$
Hence,
\begin{equation*}
\left\|\begin{pmatrix}
T^n &0\\
0 &S^n
\end{pmatrix}\right\|_{\mathbb{A}} =\max\left\{\|T^n\|_A,\|S^n\|_A \right\},
\end{equation*}
for all $n\in \mathbb{N}^*$. Hence, the proof of the first assertion is finished by using \eqref{newrad}.

\par \vskip 0.1 cm \noindent (2)\;By using the first assertion and Proposition \ref{22} we see that
$$r_{\mathbb{A}}^2\left[ \begin{pmatrix}
0 &T\\
S &0
\end{pmatrix}\right]=r_{\mathbb{A}}\left[ \begin{pmatrix}
0 &T\\
S &0
\end{pmatrix}^2\right]=r_{\mathbb{A}}\left[ \begin{pmatrix}
TS &0\\
0 &ST
\end{pmatrix}\right]=\max\{r_A(TS),r_A(ST)\}.$$
However, by \eqref{commut} we have $r_A(TS)=r_A(ST)$. Therefore, the proof is complete.
\end{proof}
Now, we state the following theorem.
\begin{theorem}
Let $\mathbb{T}=\begin{pmatrix}
P &Q\\
R &S
\end{pmatrix}$ be such that $P, Q, R, S\in \mathcal{B}_{A^{1/2}}(\mathcal{H})$ and $\mathbb{A}=\begin{pmatrix}A&0\\0&A\end{pmatrix}$. Then,
\begin{equation}
\max\Big\{\sqrt{r_A(QR)} ,\max\left\{r_A(P),r_A(S)\right\}\Big\}\leq \frac{1}{2}\left(\|\mathbb{T}\|_{\mathbb{A}}+\|\mathbb{T}^2\|_{\mathbb{A}}^{1/2}\right).
\end{equation}
\end{theorem}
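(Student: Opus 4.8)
The plan is to work in the doubled space and deduce everything from Corollary \ref{corfinal} applied to $\mathbb{A}=\mathrm{diag}(A,A)$, using the $\mathbb{A}$-unitary operator $\mathbb{U}=\begin{pmatrix}I&0\\0&-I\end{pmatrix}$ to peel off the diagonal and the anti-diagonal parts of $\mathbb{T}$. First I would record that $\mathbb{T}\in\mathcal{B}_{\mathbb{A}^{1/2}}(\mathbb{H})$: this is immediate from Lemma \ref{lemmajdid}, and it guarantees that $r_{\mathbb{A}}(\mathbb{T})$, $\|\mathbb{T}\|_{\mathbb{A}}$ and $\|\mathbb{T}^2\|_{\mathbb{A}}$ all make sense. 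I would also note the elementary facts, used repeatedly below, that $r_{\mathbb{A}}(2\mathbb{X})=2\,r_{\mathbb{A}}(\mathbb{X})$ and $r_A(-X)=r_A(X)$, both of which follow at once from the defining formula \eqref{newrad} since $\|(2\mathbb{X})^n\|_{\mathbb{A}}=2^n\|\mathbb{X}^n\|_{\mathbb{A}}$ and $\|(-X)^n\|_A=\|X^n\|_A$.

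Next I would verify that $\mathbb{U}$ is an $\mathbb{A}$-unitary operator. Since $\mathbb{U}^*\mathbb{A}=\mathbb{A}\mathbb{U}$, the equation $\mathbb{A}\mathbb{X}=\mathbb{U}^*\mathbb{A}$ is solvable, so $\mathbb{U}\in\mathcal{B}_{\mathbb{A}}(\mathbb{H})$; moreover $\mathbb{U}^{\sharp_{\mathbb{A}}}=\mathbb{A}^{\dagger}\mathbb{U}^*\mathbb{A}=\begin{pmatrix}P_A&0\\0&-P_A\end{pmatrix}$, and a direct computation gives $\mathbb{U}^{\sharp_{\mathbb{A}}}\mathbb{U}=(\mathbb{U}^{\sharp_{\mathbb{A}}})^{\sharp_{\mathbb{A}}}\mathbb{U}^{\sharp_{\mathbb{A}}}=\begin{pmatrix}P_A&0\\0&P_A\end{pmatrix}=P_{\mathbb{A}}$, as required. (Equivalently, $\mathbb{U}$ is a unitary of $\mathcal{B}(\mathbb{H})$ commuting with $\mathbb{A}$, which already forces $\mathbb{A}$-unitarity.)

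The heart of the argument is the block identity $\mathbb{T}\mathbb{U}+\mathbb{U}\mathbb{T}=2\begin{pmatrix}P&0\\0&-S\end{pmatrix}$ together with $\mathbb{T}\mathbb{U}-\mathbb{U}\mathbb{T}=2\begin{pmatrix}0&-Q\\R&0\end{pmatrix}$. Applying Corollary \ref{corfinal} with $T=\mathbb{T}$, $U=\mathbb{U}$ and ambient positive operator $\mathbb{A}$ (legitimate by the previous two paragraphs, and noting that $\|\mathbb{U}\|_{\mathbb{A}}=1$ has already been built into that corollary) yields $r_{\mathbb{A}}(\mathbb{T}\mathbb{U}\pm\mathbb{U}\mathbb{T})\leq\|\mathbb{T}\|_{\mathbb{A}}+\|\mathbb{T}^2\|_{\mathbb{A}}^{1/2}$. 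For the ``$+$'' sign I would evaluate the left-hand side using homogeneity and Lemma \ref{tawtaw}(1): $r_{\mathbb{A}}(\mathbb{T}\mathbb{U}+\mathbb{U}\mathbb{T})=2\max\{r_A(P),r_A(-S)\}=2\max\{r_A(P),r_A(S)\}$, hence $\max\{r_A(P),r_A(S)\}\leq\tfrac12\big(\|\mathbb{T}\|_{\mathbb{A}}+\|\mathbb{T}^2\|_{\mathbb{A}}^{1/2}\big)$. For the ``$-$'' sign I would instead invoke Lemma \ref{tawtaw}(2): $r_{\mathbb{A}}(\mathbb{T}\mathbb{U}-\mathbb{U}\mathbb{T})=2\sqrt{r_A((-Q)R)}=2\sqrt{r_A(QR)}$, giving $\sqrt{r_A(QR)}\leq\tfrac12\big(\|\mathbb{T}\|_{\mathbb{A}}+\|\mathbb{T}^2\|_{\mathbb{A}}^{1/2}\big)$. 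Taking the maximum of the two resulting bounds is exactly the claimed inequality.

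I do not expect any genuinely hard analytic step: the only delicate point is the bookkeeping of the second paragraph, namely checking carefully that $\mathbb{U}$ is $\mathbb{A}$-unitary so that Corollary \ref{corfinal} really applies in $\mathbb{H}$, and then making sure the sign changes $-S$ and $-Q$ produced by the commutator identities are harmless, which is where $r_A(-X)=r_A(X)$ is used. Everything else is assembling Lemmas \ref{lemmajdid} and \ref{tawtaw} and Corollary \ref{corfinal}.
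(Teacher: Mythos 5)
Your proposal is correct and follows essentially the same route as the paper: the same $\mathbb{A}$-unitary $\mathbb{U}=\begin{pmatrix}I&0\\0&-I\end{pmatrix}$, the same anticommutator/commutator identities isolating $\mathrm{diag}(P,-S)$ and the off-diagonal block, and the same combination of Corollary \ref{corfinal} with Lemma \ref{tawtaw}. The only cosmetic difference is that you verify the $\mathbb{A}$-unitarity of $\mathbb{U}$ directly via $\mathbb{U}^{\sharp_{\mathbb{A}}}=\mathbb{A}^{\dagger}\mathbb{U}^{*}\mathbb{A}$, whereas the paper cites an external lemma for the form of $\mathbb{U}^{\sharp_{\mathbb{A}}}$.
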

\begin{proof}
Let $\mathbb{U}=\left(\begin{array}{cc}
I&O \\
O&-I
\end{array}\right).$ In view of \cite[Lemma 3.1.]{bhunfekipaul}, we have $\mathbb{U}\in \mathcal{B}_{\mathbb{A}}(\mathcal{H}\oplus \mathcal{H})$ and $\mathbb{U}^{\sharp_{\mathbb{A}}}=\left(\begin{array}{cc}
I^{\sharp_A}&O \\
O&(-I)^{\sharp_A}
\end{array}\right).$ So, ones get
$$\mathbb{U}^{\sharp_{\mathbb{A}}}\mathbb{U}=\left(\begin{array}{cc}
P_A&O \\
O&-P_A
\end{array}\right)\left(\begin{array}{cc}
I&O \\
O&-I
\end{array}\right)=\left(\begin{array}{cc}
P_A&O \\
O&P_A
\end{array}\right)=P_{\mathbb{A}},$$
Similarly, we show that $(\mathbb{U}^{\sharp_\mathbb{A}})^{\sharp_\mathbb{A}} \mathbb{U}^{\sharp_\mathbb{A}}=P_\mathbb{A}.$ Hence, $\mathbb{U}$ is an $\mathbb{A}$-unitary operator. Moreover, it can verified that
\begin{equation}\label{jdidtaw}
\mathbb{T}\mathbb{U}+\mathbb{U}\mathbb{T}=2\begin{pmatrix}
P &0\\
0 &-S
\end{pmatrix}\;\text{ and }\;\mathbb{T}\mathbb{U}-\mathbb{U}\mathbb{T}=2\begin{pmatrix}
0 &-Q\\
R &0
\end{pmatrix}.
\end{equation}
So, by using Lemma \ref{tawtaw} together with \eqref{jdidtaw} we get
\begin{align*}
2\max\left\{r_A(P),r_A(S)\right\}
& =r_{\mathbb{A}} \left[\begin{pmatrix}
2P &0\\
0 &-2S
\end{pmatrix}\right]\\
 &=r_{\mathbb{A}}(\mathbb{T}\mathbb{U}+\mathbb{U}\mathbb{T})\\
  &\leq\|\mathbb{T}\|_{\mathbb{A}}+\|\mathbb{T}^2\|_{\mathbb{A}}^{1/2},\;(\text{by Corollary} \ref{corfinal}).
\end{align*}
Thus, we get
\begin{equation}\label{f1}
\max\left\{r_A(P),r_A(S)\right\}\leq \frac{1}{2}\left(\|\mathbb{T}\|_{\mathbb{A}}+\|\mathbb{T}^2\|_{\mathbb{A}}^{1/2}\right).
\end{equation}
On the other hand, again by using Lemma \ref{tawtaw} together with \eqref{jdidtaw} we get
\begin{align*}
2\sqrt{r_A(QR)}
& =r_{\mathbb{A}} \left[\begin{pmatrix}
0 &-2Q\\
2R &0
\end{pmatrix}\right]\\
 &=r_{\mathbb{A}}(\mathbb{T}\mathbb{U}-\mathbb{U}\mathbb{T})\\
 &\leq\|\mathbb{T}\|_{\mathbb{A}}+\|\mathbb{T}^2\|_{\mathbb{A}}^{1/2},\;(\text{by Corollary} \ref{corfinal}).
\end{align*}
Thus, we get
\begin{equation}\label{f2}
\sqrt{r_A(QR)}\leq \frac{1}{2}\left(\|\mathbb{T}\|_{\mathbb{A}}+\|\mathbb{T}^2\|_{\mathbb{A}}^{1/2}\right).
\end{equation}
Combining \eqref{f1} together with \eqref{f2} yields to the desired result.
\end{proof}

In order to establish a new $A$-spectral radius inequality, we need to recall from \cite{BP} the following lemma.
\begin{lemma}\label{cor-1}
Let $\mathbb{T}=\begin{pmatrix}
P &Q\\
R &S
\end{pmatrix}$ be such that $P, Q, R, S\in \mathcal{B}_{A^{1/2}}(\mathcal{H})$ and $\mathbb{A}=\begin{pmatrix}A&0\\0&A\end{pmatrix}$. Then
\[\omega_\mathbb{A}(\mathbb{T})\leq \frac{1}{2}\left[ \omega_A(P)+\omega_A(S)+\sqrt{\big(\omega_A(P)-\omega_A(S)\big)^2+\big(\|Q\|_A+\|R\|_A\big)^2}\right].\]
\end{lemma}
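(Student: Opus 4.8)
The plan is to estimate $|\langle\mathbb{T}x\mid x\rangle_{\mathbb{A}}|$ directly for an arbitrary $x=(x_1,x_2)\in\mathbb{H}$ with $\|x\|_{\mathbb{A}}=1$, and then optimize over how the $A$-seminorm is split between the two coordinates. First I would observe that $\mathbb{T}\in\mathcal{B}_{\mathbb{A}^{1/2}}(\mathbb{H})$ by Lemma \ref{lemmajdid}, hence $\omega_{\mathbb{A}}(\mathbb{T})<\infty$ by \eqref{equivalentsemi} and $\omega_{\mathbb{A}}(\mathbb{T})=\sup\{|\langle\mathbb{T}x\mid x\rangle_{\mathbb{A}}|\,;\;\|x\|_{\mathbb{A}}=1\}$. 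Expanding the block action of $\mathbb{T}$ gives
\[
\langle\mathbb{T}x\mid x\rangle_{\mathbb{A}}=\langle Px_1\mid x_1\rangle_A+\langle Qx_2\mid x_1\rangle_A+\langle Rx_1\mid x_2\rangle_A+\langle Sx_2\mid x_2\rangle_A .
\]
Applying the triangle inequality, the numerical radius bounds $|\langle Px_1\mid x_1\rangle_A|\le\omega_A(P)\|x_1\|_A^2$ and $|\langle Sx_2\mid x_2\rangle_A|\le\omega_A(S)\|x_2\|_A^2$, and the Cauchy--Schwarz estimates $|\langle Qx_2\mid x_1\rangle_A|\le\|Qx_2\|_A\|x_1\|_A\le\|Q\|_A\|x_1\|_A\|x_2\|_A$ (using \eqref{semiiineq}) together with the analogous bound for $R$, one obtains
\[
|\langle\mathbb{T}x\mid x\rangle_{\mathbb{A}}|\le\omega_A(P)\|x_1\|_A^2+\omega_A(S)\|x_2\|_A^2+\big(\|Q\|_A+\|R\|_A\big)\|x_1\|_A\|x_2\|_A .
\]

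Next I would reduce the matter to a scalar optimization. Writing $a=\omega_A(P)$, $b=\omega_A(S)$, $c=\|Q\|_A+\|R\|_A$, and $t=\|x_1\|_A^2\in[0,1]$, the constraint $\|x_1\|_A^2+\|x_2\|_A^2=\|x\|_{\mathbb{A}}^2=1$ turns the last display into $|\langle\mathbb{T}x\mid x\rangle_{\mathbb{A}}|\le g(t)$, where $g(t):=at+b(1-t)+c\sqrt{t(1-t)}$. It then suffices to prove that $g(t)\le\tfrac12\big(a+b+\sqrt{(a-b)^2+c^2}\big)$ for every $t\in[0,1]$. Substituting $t=\tfrac{1+\cos\theta}{2}$ with $\theta\in[0,\pi]$ converts $g$ into $\tfrac{a+b}{2}+\tfrac{a-b}{2}\cos\theta+\tfrac{c}{2}\sin\theta$, which by the amplitude--phase identity equals $\tfrac{a+b}{2}+\tfrac12\sqrt{(a-b)^2+c^2}\,\cos(\theta-\varphi)$ for a suitable phase $\varphi$; this is clearly at most $\tfrac{a+b}{2}+\tfrac12\sqrt{(a-b)^2+c^2}$. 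Taking the supremum over all $x\in\mathbb{H}$ with $\|x\|_{\mathbb{A}}=1$ gives the asserted inequality.

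There is no deep obstacle here: the argument is the classical $2\times 2$ block numerical radius estimate of \cite{BP}, transcribed with $\langle\cdot\mid\cdot\rangle$ replaced by $\langle\cdot\mid\cdot\rangle_A$. The one point I would handle with care is the passage to the seminorm setting: one must invoke $|\langle Tu\mid v\rangle_A|\le\|T\|_A\|u\|_A\|v\|_A$ and $|\langle Tu\mid u\rangle_A|\le\omega_A(T)\|u\|_A^2$ for \emph{all} vectors $u,v$, including those with $\|u\|_A=0$ or $\|v\|_A=0$ (in which case both sides of the first are zero and the right side of the second is nonnegative), so that no step silently normalizes by a quantity that may vanish. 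With that caveat, the scalar optimization displayed above is the only genuine computation, and it is elementary.
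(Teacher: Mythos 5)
Your proof is correct. Note, however, that the paper does not actually prove this lemma: it imports it verbatim from \cite{BP}, so there is no in-paper argument to compare against. What you have written is, in effect, the standard proof of that cited result, transplanted to the semi-inner product setting: expand $\langle\mathbb{T}x\mid x\rangle_{\mathbb{A}}$ into its four blocks, bound the diagonal terms by $\omega_A(P)\|x_1\|_A^2$ and $\omega_A(S)\|x_2\|_A^2$, the off-diagonal terms by $\|Q\|_A\|x_1\|_A\|x_2\|_A$ and $\|R\|_A\|x_1\|_A\|x_2\|_A$ via the Cauchy--Schwarz inequality for $\langle\cdot\mid\cdot\rangle_A$ together with \eqref{semiiineq}, and then maximize $at+b(1-t)+c\sqrt{t(1-t)}$ over $t\in[0,1]$; your trigonometric substitution correctly yields $\tfrac12\bigl(a+b+\sqrt{(a-b)^2+c^2}\bigr)$, which one can also recognize as the largest eigenvalue of the symmetric matrix $\begin{pmatrix} a & c/2\\ c/2 & b\end{pmatrix}$. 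Your care about vectors with $\|x_1\|_A=0$ or $\|x_2\|_A=0$ is exactly the right caveat in the seminorm setting (both degenerate cases are handled since every estimate you use is homogeneous and vanishes when the relevant seminorm does), and the identity $\|x\|_{\mathbb{A}}^2=\|x_1\|_A^2+\|x_2\|_A^2$ justifies the constraint in the scalar optimization. So the proposal supplies a complete, self-contained proof of a statement the paper only cites, which is a net gain in readability; the only thing the citation route buys is brevity.
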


\begin{theorem}\label{main2jd}
Let $T_1,T_2,S_1,S_2\in \mathcal{B}_{A^{1/2}}(\mathcal{H})$. Then,
\begin{align*}\label{eq2}
&r_A\left(T_1S_1+T_2S_2\right)\nonumber\\
& \leq\frac{1}{2}\left[\omega_A(S_1T_1) +\omega_A(S_2T_2)\big)+
\sqrt{\left( \omega_A(S_1T_1) -\omega_A(S_2T_2)\right)
^{2}+4\left\Vert S_1T_2\right\Vert_A \left\Vert S_2T_1\right\Vert_A }\right].
\end{align*}%
Moreover, this inequality refines \eqref{eq1}.
\end{theorem}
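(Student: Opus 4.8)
The plan is to mimic the argument of Theorem~\ref{mainth1}, replacing the use of Theorem~\ref{main222222} by the sharper Lemma~\ref{cor-1}. First I would set $\mathbb{A}=\begin{pmatrix}A&0\\0&A\end{pmatrix}$ and, exactly as in the proof of Theorem~\ref{mainth1}, use \eqref{semmmmmmmm000}, the factorization $\begin{pmatrix}T_1S_1+T_2S_2&0\\0&0\end{pmatrix}=\begin{pmatrix}T_1&T_2\\0&0\end{pmatrix}\begin{pmatrix}S_1&0\\S_2&0\end{pmatrix}$, and the commutativity property \eqref{commut} to obtain
\[
r_A(T_1S_1+T_2S_2)=r_{\mathbb{A}}\left[\begin{pmatrix}S_1T_1&S_1T_2\\S_2T_1&S_2T_2\end{pmatrix}\right].
\]
Then I would invoke \eqref{dom}, namely $r_{\mathbb{A}}(\mathbb{X})\leq\omega_{\mathbb{A}}(\mathbb{X})$, applied to the $2\times 2$ operator matrix $\mathbb{X}=\begin{pmatrix}S_1T_1&S_1T_2\\S_2T_1&S_2T_2\end{pmatrix}$, which lies in $\mathcal{B}_{\mathbb{A}^{1/2}}(\mathbb{H})$ by Lemma~\ref{lemmajdid}. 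Finally Lemma~\ref{cor-1} with $P=S_1T_1$, $Q=S_1T_2$, $R=S_2T_1$, $S=S_2T_2$ gives
\[
\omega_{\mathbb{A}}(\mathbb{X})\leq\tfrac{1}{2}\Big[\omega_A(S_1T_1)+\omega_A(S_2T_2)+\sqrt{\big(\omega_A(S_1T_1)-\omega_A(S_2T_2)\big)^2+\big(\|S_1T_2\|_A+\|S_2T_1\|_A\big)^2}\Big],
\]
which is slightly weaker in the cross term than the claimed bound; I would instead need the variant of Lemma~\ref{cor-1} that produces $4\|Q\|_A\|R\|_A$ rather than $(\|Q\|_A+\|R\|_A)^2$ inside the radical — this is obtained by the same proof after applying a diagonal $\mathbb{A}$-unitary conjugation $\mathrm{diag}(\alpha I,\beta I)$ (or by the AM–GM style scaling used in \cite{BP}) to balance $Q$ and $R$, so I would either cite that sharper form or reprove it in one line.

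For the refinement claim, I would compare term by term with \eqref{eq1}. Writing $g(a,b,c,d)=\tfrac12\big(a+b+\sqrt{(a-b)^2+4cd}\big)$, one checks that $g$ is nondecreasing in each of $a,b$ (for fixed $c,d\geq 0$) and the expression on the right of \eqref{eq1} is $g(\|S_1T_1\|_A,\|S_2T_2\|_A,\|S_1T_2\|_A,\|S_2T_1\|_A)$ while the new bound is $g(\omega_A(S_1T_1),\omega_A(S_2T_2),\|S_1T_2\|_A,\|S_2T_1\|_A)$. Since $\omega_A(X)\leq\|X\|_A$ for every $X\in\mathcal{B}_{A^{1/2}}(\mathcal{H})$ by the right-hand inequality in \eqref{equivalentsemi}, monotonicity of $g$ in its first two arguments immediately yields that the new inequality is sharper than \eqref{eq1}.

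The routine part is the monotonicity of $g$: fixing $c,d$ and differentiating, $\partial_a g=\tfrac12\big(1+(a-b)/\sqrt{(a-b)^2+4cd}\big)\geq 0$ since $|a-b|\leq\sqrt{(a-b)^2+4cd}$, and symmetrically for $b$; alternatively one notes $g(a,b,c,d)$ is the larger eigenvalue (spectral radius) of $\begin{pmatrix}a&c\\d&b\end{pmatrix}$ with $c,d\geq 0$, and the spectral radius of an entrywise-nonnegative matrix is entrywise monotone by the norm-monotonicity already invoked in the proof of Theorem~\ref{main222222}. The one genuine obstacle is justifying the $4\|Q\|_A\|R\|_A$ form of Lemma~\ref{cor-1}; everything else is bookkeeping already performed in Theorem~\ref{mainth1}. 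I expect the cleanest write-up simply states the needed variant of the numerical-radius bound (citing \cite{BP}) and then runs the three displayed steps above.
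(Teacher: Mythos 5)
Your overall route is the paper's: the same block factorization and use of \eqref{commut} to reduce to $r_{\mathbb{A}}$ of $\mathbb{X}=\begin{pmatrix}S_1T_1&S_1T_2\\S_2T_1&S_2T_2\end{pmatrix}$, then \eqref{dom} and Lemma \ref{cor-1}; and your monotonicity argument for the refinement (identifying the bound as the larger eigenvalue of a nonnegative $2\times2$ matrix and using entrywise monotonicity) is exactly the paper's. The one genuine problem is the step you yourself flag: there is no ``variant of Lemma \ref{cor-1}'' with $4\|Q\|_A\|R\|_A$ in place of $\big(\|Q\|_A+\|R\|_A\big)^2$. Such a bound on $\omega_{\mathbb{A}}$ is simply false: take $A=I$, $P=R=S=0$ and $Q=I$; then $\omega_{\mathbb{A}}(\mathbb{T})=\tfrac12$ while the proposed right-hand side equals $0$. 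Your conjugation idea cannot rescue the lemma itself, because $\mathrm{diag}(\alpha I,\beta I)$ with $|\alpha|\neq|\beta|$ is not $\mathbb{A}$-unitary, and the $\mathbb{A}$-numerical radius is not invariant under non-unitary similarities.

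What does work --- and is what the paper does --- is to exploit that the left-hand side $r_A(T_1S_1+T_2S_2)$, not $\omega_{\mathbb{A}}(\mathbb{X})$, is the similarity-invariant quantity: replace $T_1$ by $\varepsilon T_1$ and $S_1$ by $\varepsilon^{-1}S_1$ (which changes neither $T_1S_1$ nor the diagonal entries $S_1T_1,S_2T_2$, but scales the off-diagonal entries to $\varepsilon^{-1}S_1T_2$ and $\varepsilon S_2T_1$), run the whole chain including Lemma \ref{cor-1} \emph{as stated} for each $\varepsilon>0$, and only then take the infimum over $\varepsilon$ using $\inf_{t>0}\left(\alpha t+\beta t^{-1}\right)=2\sqrt{\alpha\beta}$, treating the degenerate case $\|S_1T_2\|_A\,\|S_2T_1\|_A=0$ separately. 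Your parenthetical ``AM--GM style scaling'' is precisely this argument, so the fix is small; but as written the proposal leans on a false numerical-radius statement, and the scaling must be applied \emph{before}, not after, the passage from $r_{\mathbb{A}}$ to $\omega_{\mathbb{A}}$.
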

\begin{proof}
Let $\mathbb{A}=\begin{pmatrix}A&0\\0&A\end{pmatrix}$. By proceeding as in the proof of Theorem \ref{mainth1} and using \eqref{dom} we get
\begin{align*}
r_A\left(T_1S_1+T_2S_2\right)
&=r_{\mathbb{A}}\left[ \begin{pmatrix}
S_1T_1 &S_1T_2\\
S_2T_1 &S_2T_2
\end{pmatrix}\right]\\
&\leq \omega_{\mathbb{A}}\left[ \begin{pmatrix}
S_1T_1 &S_1T_2\\
S_2T_1 &S_2T_2
\end{pmatrix}\right].
\end{align*}
So, by applying Lemma \ref{cor-1} we obtain
\begin{align*}
r_A\left(T_1S_1+T_2S_2\right)
&\leq\frac{1}{2}\big( \omega_A(S_1T_1) +\omega_A(S_2T_2)\big)\nonumber\\
&+\frac{1}{2}\Big(\sqrt{\left( \omega_A(S_1T_1) -\omega_A(S_2T_2)\right)
^{2}+\big(\|S_1T_2\|_A +\|S_2T_1\|_A\big)^2 }\Big).
\end{align*}

On the other hand, it can be observed that for every positive real numbers $\alpha$ and $\beta$ we have
\begin{equation}\label{infkitta}
\inf\left\{\,\alpha t+\tfrac{\beta}{t}\,:\,t\in \mathbb{R}, t>0\,\right\}=2\sqrt{\alpha\beta}.
\end{equation}
If $\left\Vert S_1T_2\right\Vert_A=0$ or $\left\Vert S_2T_1\right\Vert_A=0$, then the required inequality holds trivially. Assume that $\left\Vert S_1T_2\right\Vert_A\neq0$ and $\left\Vert S_2T_1\right\Vert_A\neq0$. By replacing $T_1$ and $S_1$ by $\varepsilon T_1$ and $\frac{1}{\varepsilon}S_1$ in the last inequality respectively, and then taking the infimum over $\varepsilon> 0$ and using \eqref{infkitta} we get the desired inequality.

Now, in order to see that the obtained inequality refines \eqref{eq1}, we let
$$a_1=\frac{1}{2}\left[ \left\Vert S_1T_1\right\Vert +\left\Vert S_2T_2\right\Vert +%
\sqrt{\left( \left\Vert S_1T_1\right\Vert -\left\Vert S_2T_2\right\Vert \right)
^{2}+4\left\Vert S_1T_2\right\Vert \left\Vert S_2T_1\right\Vert }\right]\;\text{ and }$$
$$a_2=\frac{1}{2}\left[\omega_A(S_1T_1) +\omega_A(S_2T_2)\big)+
\sqrt{\left( \omega_A(S_1T_1) -\omega_A(S_2T_2)\right)
^{2}+4\left\Vert S_1T_2\right\Vert_A \left\Vert S_2T_1\right\Vert_A }\right].$$
It is not difficult to observe that
\begin{align*}
a_1
&=r\left[ \begin{pmatrix}
\|S_1T_1\|_A &\sqrt{\left\Vert S_1T_2\right\Vert_A \left\Vert S_2T_1\right\Vert_A}\\
\sqrt{\left\Vert S_1T_2\right\Vert_A \left\Vert S_2T_1\right\Vert_A} &\|S_2T_2\|_A
\end{pmatrix}\right]\\
&=\left\| \begin{pmatrix}
\|S_1T_1\|_A &\sqrt{\left\Vert S_1T_2\right\Vert_A \left\Vert S_2T_1\right\Vert_A}\\
\sqrt{\left\Vert S_1T_2\right\Vert_A \left\Vert S_2T_1\right\Vert_A} &\|S_2T_2\|_A
\end{pmatrix}\right\|,
\end{align*}
and
\begin{align*}
a_2
&=r\left[ \begin{pmatrix}
\omega_A(S_1T_1) &\sqrt{\left\Vert S_1T_2\right\Vert_A \left\Vert S_2T_1\right\Vert_A}\\
\sqrt{\left\Vert S_1T_2\right\Vert_A \left\Vert S_2T_1\right\Vert_A} &\omega_A(S_2T_2)
\end{pmatrix}\right]\\
&=\left\| \begin{pmatrix}
\omega_A(S_1T_1) &\sqrt{\left\Vert S_1T_2\right\Vert_A \left\Vert S_2T_1\right\Vert_A}\\
\sqrt{\left\Vert S_1T_2\right\Vert_A \left\Vert S_2T_1\right\Vert_A} &\omega_A(S_2T_2)
\end{pmatrix}\right\|
\end{align*}
Since $\omega_A(X)\leq \|X\|_A$ for all $X\in \mathcal{B}_{A^{1/2}}(\mathcal{H})$, then it follows from the norm monotonicity of matrices with nonnegative entries that $a_2\leq a_1$.
\end{proof}
\begin{corollary}
Let $T,S\in \mathcal{B}_{A^{1/2}}(\mathcal{H})$. Then, we have
\begin{align*}
 r_A\left(T+S\right)\leq \frac{1}{2}\left( \omega_A(T)+\omega_A(S)+%
\sqrt{\left[ \omega_A(T) -\omega_A(S) \right]^{2}+4\mu(T,S) }\right),
\notag
\end{align*}%
where $\mu(T,S)=\min\{\|TS\|_A,\|ST\|_A\}$.
\end{corollary}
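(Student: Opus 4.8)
The plan is to obtain this corollary as a direct consequence of Theorem~\ref{main2jd}, applied to two convenient factorizations of $T+S$, and then to keep the better of the two resulting bounds by a monotonicity argument. The key observation is that the operator $T+S$ can be written as $T_1S_1+T_2S_2$ in more than one way, and different writings push different mixed products ($TS$ versus $ST$) into the term under the square root.

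First I would take $T_1=I$, $S_1=T$, $T_2=S$, $S_2=I$; all four lie in $\mathcal{B}_{A^{1/2}}(\mathcal{H})$ since the identity is trivially $A$-bounded. Then $T_1S_1+T_2S_2=T+S$, while $S_1T_1=T$, $S_2T_2=S$, $S_1T_2=TS$ and $S_2T_1=I$. Using $\|I\|_A=1$, Theorem~\ref{main2jd} gives
\begin{equation*}
r_A(T+S)\leq\frac12\left[\omega_A(T)+\omega_A(S)+\sqrt{\big(\omega_A(T)-\omega_A(S)\big)^2+4\|TS\|_A}\right].
\end{equation*}
Next, the mirror choice $T_1=T$, $S_1=I$, $T_2=I$, $S_2=S$ again yields $T_1S_1+T_2S_2=T+S$, $S_1T_1=T$, $S_2T_2=S$, but now $S_1T_2=I$ and $S_2T_1=ST$, so Theorem~\ref{main2jd} gives
\begin{equation*}
r_A(T+S)\leq\frac12\left[\omega_A(T)+\omega_A(S)+\sqrt{\big(\omega_A(T)-\omega_A(S)\big)^2+4\|ST\|_A}\right].
\end{equation*}

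Finally, for fixed $a,b\geq0$ the map $x\mapsto\frac12\big(a+b+\sqrt{(a-b)^2+4x}\big)$ is nondecreasing on $[0,\infty)$; applying this with $a=\omega_A(T)$, $b=\omega_A(S)$, the two inequalities above combine into the single estimate in which $4\|TS\|_A$ and $4\|ST\|_A$ are replaced by $4\min\{\|TS\|_A,\|ST\|_A\}=4\mu(T,S)$, which is exactly the claimed bound. There is no real technical obstacle here: the only step requiring a moment's thought is recognizing that the two factorizations $T+S=I\cdot T+S\cdot I$ and $T+S=T\cdot I+I\cdot S$ are what produce the mixed norms $\|TS\|_A$ and $\|ST\|_A$ respectively, after which the monotonicity remark finishes the proof.
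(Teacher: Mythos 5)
Your proposal is correct and follows essentially the same route as the paper: both derive the two bounds with $\|TS\|_A$ and $\|ST\|_A$ from Theorem~\ref{main2jd} (the paper uses the single substitution $T_1=T$, $S_2=S$, $T_2=S_1=I$ and then invokes symmetry in $T$ and $S$, whereas you spell out the second factorization explicitly) and then combine them. Your explicit monotonicity remark justifying the passage to $\min\{\|TS\|_A,\|ST\|_A\}$ is a welcome detail that the paper leaves implicit.
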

\begin{proof}
By letting $T_1=T$, $S_2=S$ and $T_2=S_1=I$ in Theorem \ref{main2jd} we get
\begin{equation}\label{l1}
r_A\left(T+S\right)\leq \frac{1}{2}\left( \omega_A(T)+\omega_A(S)+%
\sqrt{\left[ \omega_A(T) -\omega_A(S) \right]^{2}+4\|ST\|_A }\right).
\end{equation}
This implies, by symmetry, that
\begin{equation}\label{l2}
r_A\left(T+S\right)\leq \frac{1}{2}\left( \omega_A(T)+\omega_A(S)+%
\sqrt{\left[ \omega_A(T) -\omega_A(S) \right]^{2}+4\|TS\|_A }\right).
\end{equation}
So, we get the desired result by combining \eqref{l1} together with \eqref{l2}.
\end{proof}
\begin{corollary}
Let $T,S\in \mathcal{B}_{A^{1/2}}(\mathcal{H})$. Then, we have
\begin{align*}
 r_A\left(TS\right)\leq \frac{1}{4}\left( \omega_A(TS)+\omega_A(ST)+%
\sqrt{\left[ \omega_A(TS) -\omega_A(ST) \right]^{2}+4\nu(T,S) }\right),
\notag
\end{align*}%
where $\nu(T,S)=\min\{\|T\|_A\|STS\|_A,\|S\|_A\|TST\|_A\}$.
\end{corollary}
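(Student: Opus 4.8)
The plan is to deduce this corollary directly from Theorem \ref{main2jd} by a judicious choice of the four operators, combined with the homogeneity $r_A(\lambda X)=|\lambda|\,r_A(X)$ (which is immediate from \eqref{newrad}) and the commutativity property \eqref{commut}. The point is that one should not try to hit $r_A(TS)$ on the nose, but rather bound $r_A(2TS)=r_A(TS+TS)$ and then divide by $2$, which is exactly where the factor $\tfrac14$ comes from.

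First I would write $2TS=TS+TS$ and apply Theorem \ref{main2jd} with $T_1=T$, $S_1=S$, $T_2=TS$ and $S_2=I$; all of these lie in $\mathcal{B}_{A^{1/2}}(\mathcal{H})$ since this set is a subalgebra of $\mathcal{B}(\mathcal{H})$ and $I$ is trivially $A$-bounded. With these choices one has $T_1S_1+T_2S_2=2TS$, $S_1T_1=ST$, $S_2T_2=TS$, $S_1T_2=STS$ and $S_2T_1=T$, so Theorem \ref{main2jd} yields
\[
2\,r_A(TS)=r_A(2TS)\leq\frac{1}{2}\left(\omega_A(TS)+\omega_A(ST)+\sqrt{\left(\omega_A(TS)-\omega_A(ST)\right)^2+4\|T\|_A\|STS\|_A}\right),
\]
that is, $r_A(TS)\leq\tfrac14\big(\omega_A(TS)+\omega_A(ST)+\sqrt{(\omega_A(TS)-\omega_A(ST))^2+4\|T\|_A\|STS\|_A}\big)$.

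Next, by \eqref{commut} we have $r_A(TS)=r_A(ST)$, so I would run the same argument on $2ST=ST+ST$ with $T_1=S$, $S_1=T$, $T_2=ST$ and $S_2=I$; this gives $S_1T_1=TS$, $S_2T_2=ST$, $S_1T_2=TST$, $S_2T_1=S$, hence the twin estimate $r_A(TS)\leq\tfrac14\big(\omega_A(TS)+\omega_A(ST)+\sqrt{(\omega_A(TS)-\omega_A(ST))^2+4\|S\|_A\|TST\|_A}\big)$. Taking the smaller of the two right-hand sides and using the monotonicity of $t\mapsto\sqrt{t}$ to move the minimum inside the radical produces precisely the claimed bound with $\nu(T,S)=\min\{\|T\|_A\|STS\|_A,\|S\|_A\|TST\|_A\}$.

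There is essentially no obstacle here beyond spotting the substitution $(T_2,S_2)=(TS,I)$: it is what simultaneously keeps the sum $T_1S_1+T_2S_2$ equal to a scalar multiple of $TS$ while collapsing the ``cross term'' $\|S_1T_2\|_A\|S_2T_1\|_A$ of Theorem \ref{main2jd} into $\|STS\|_A\|T\|_A$. The only thing one must be mildly careful about is to record that all operators appearing ($I$, $TS$, $ST$, $STS$, $TST$) indeed belong to $\mathcal{B}_{A^{1/2}}(\mathcal{H})$, which follows at once from its being a subalgebra of $\mathcal{B}(\mathcal{H})$; the rest is bookkeeping.
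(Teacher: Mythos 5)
Your proposal is correct and follows essentially the same route as the paper: the paper applies Theorem \ref{main2jd} with $T_1=\tfrac{T}{2}$, $S_1=S$, $T_2=\tfrac{TS}{2}$, $S_2=I$ (so that $T_1S_1+T_2S_2=TS$ directly), obtains the twin bound ``by symmetry,'' and takes the minimum, whereas you place the factor $\tfrac12$ outside via the homogeneity $r_A(2TS)=2r_A(TS)$ and spell out the symmetric substitution explicitly --- a purely cosmetic difference. All the operators you substitute do lie in $\mathcal{B}_{A^{1/2}}(\mathcal{H})$, so the argument is complete.
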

\begin{proof}
By letting $T_1=\frac{T}{2}$, $S_1=S$, $T_2=\frac{TS}{2}$ and $S_2=I$ in Theorem \ref{main2jd} we obtain
\begin{equation*}
r_A\left(TS\right)\leq \frac{1}{4}\left( \omega_A(TS)+\omega_A(ST)+%
\sqrt{\left[ \omega_A(TS) -\omega_A(ST) \right]^{2}+4\|T\|_A\|STS\|_A }\right).
\end{equation*}
This in turn implies, by symmetry, that
\begin{equation*}
r_A\left(TS\right)\leq \frac{1}{4}\left( \omega_A(TS)+\omega_A(ST)+%
\sqrt{\left[ \omega_A(TS) -\omega_A(ST) \right]^{2}+4\|S\|_A\|TST\|_A }\right).
\end{equation*}
Therefore, the desired inequality follows immediately from the above two inequalities.
\end{proof}

\begin{corollary}
Let $T,S\in \mathcal{B}_{A^{1/2}}(\mathcal{H})$. Then, we have
\begin{align}\label{e.0.7}
& r_A\left(TS\pm ST\right)\\
& \leq \frac{1}{2}\left( \omega_A(TS)+\omega_A(ST)+%
\sqrt{\left[ \omega_A(TS) -\omega_A(ST) \right]^{2}+4\|T^{2}\|_A \|S^{2}\|_A }\right)
\notag
\end{align}%
Moreover, if $TS=ST$, then
\begin{equation}\label{e.0.8}
r_A\left( TS\right) \leq \frac{1}{2}\left( \omega_A(TS)+\left\Vert T^{2}\right\Vert_A ^{1/2}\left\Vert S^{2}\right\Vert_A ^{1/2}\right).
\end{equation}%
\end{corollary}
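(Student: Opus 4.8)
The plan is to obtain both inequalities as immediate specializations of Theorem~\ref{main2jd}, in exactly the same spirit in which Corollary~\ref{commmmm1} was deduced from Theorem~\ref{mainth1}.

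For \eqref{e.0.7}, I would apply Theorem~\ref{main2jd} with $T_1=S_2=T$, $S_1=S$ and $T_2=\pm S$ (all of which lie in $\mathcal{B}_{A^{1/2}}(\mathcal{H})$ since it is an algebra stable under scalar multiplication). A direct computation then gives $T_1S_1+T_2S_2=TS\pm ST$, $S_1T_1=ST$, $S_2T_2=\pm TS$, $S_1T_2=\pm S^{2}$ and $S_2T_1=T^{2}$. Using the absolute homogeneity of the $A$-numerical radius and of the $A$-seminorm, so that $\omega_A(\pm TS)=\omega_A(TS)$ and $\|\pm S^{2}\|_A=\|S^{2}\|_A$, Theorem~\ref{main2jd} yields
\[
r_A(TS\pm ST)\leq \frac12\left(\omega_A(ST)+\omega_A(TS)+\sqrt{\big(\omega_A(ST)-\omega_A(TS)\big)^{2}+4\|S^{2}\|_A\|T^{2}\|_A}\right),
\]
which is precisely \eqref{e.0.7} once one observes that $\big(\omega_A(ST)-\omega_A(TS)\big)^{2}=\big(\omega_A(TS)-\omega_A(ST)\big)^{2}$ and $\|S^{2}\|_A\|T^{2}\|_A=\|T^{2}\|_A\|S^{2}\|_A$.

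For \eqref{e.0.8}, I would specialize \eqref{e.0.7} to the commuting case $TS=ST$ with the ``$+$'' sign. Then $TS+ST=2TS$, so the left-hand side equals $r_A(2TS)=2\,r_A(TS)$ by the scalar homogeneity of $r_A(\cdot)$, which is immediate from \eqref{newrad}. On the right-hand side $\omega_A(ST)=\omega_A(TS)$, so the term under the square root reduces to $4\|T^{2}\|_A\|S^{2}\|_A$ and the whole bound collapses to $\omega_A(TS)+\|T^{2}\|_A^{1/2}\|S^{2}\|_A^{1/2}$. Dividing by $2$ gives \eqref{e.0.8}.

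I do not expect any genuine obstacle: the argument is purely a matter of making the right substitution and invoking the already-established Theorem~\ref{main2jd}. The only facts that need a word of justification are the scalar homogeneity of $\omega_A(\cdot)$ and of $r_A(\cdot)$ — both trivial from their definitions and from \eqref{aseminorm}, \eqref{newrad} — and the symmetry of the discriminant under interchanging $\omega_A(TS)$ and $\omega_A(ST)$.
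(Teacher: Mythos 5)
Your proposal is correct and follows exactly the paper's route: the paper likewise deduces \eqref{e.0.7} from Theorem~\ref{main2jd} with the substitution $T_1=S_2=T$, $S_1=S$, $T_2=\pm S$, and obtains \eqref{e.0.8} immediately from \eqref{e.0.7} in the commuting case. Your write-up merely makes explicit the bookkeeping (the products $S_iT_j$, the homogeneity of $\omega_A$, $\|\cdot\|_A$ and $r_A$) that the paper leaves tacit.
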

\begin{proof}
The inequality \eqref{e.0.7} follows from Theorem \ref{main2jd} by letting $T_1=S_2=T$, $S_1=S$ and $T_2=\pm S$. Moreover, if $TS=ST$, then \eqref{e.0.8} holds immediately by using \eqref{e.0.7}.
\end{proof}

The following proposition is also an immediate consequence of Theorem \ref{main2jd}.
\begin{proposition}
Let $T,S\in \mathcal{B}_{A^{1/2}}(\mathcal{H})$. Then, we have
\begin{align}\label{e.0.9}
r_A\left( TS\pm ST\right) &\leq \omega_A(TS)+\min \left\{
\left\Vert T\right\Vert_A ^{1/2}\left\Vert TS^{2}\right\Vert_A ^{1/2},\left\Vert
T^{2}S\right\Vert_A^{1/2}\left\Vert S\right\Vert_A ^{1/2}\right\}.
\end{align}%
and
\begin{align}\label{e.0.10}
r_A\left( TS\pm ST\right) &\leq \omega_A(ST)+\min \left\{
\left\Vert S\right\Vert_A ^{1/2}\left\Vert ST^{2}\right\Vert_A ^{1/2},\left\Vert
S^{2}T\right\Vert_A^{1/2}\left\Vert T\right\Vert_A ^{1/2}\right\}.
\end{align}%
\end{proposition}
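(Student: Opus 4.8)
The plan is to obtain both \eqref{e.0.9} and \eqref{e.0.10} as direct specializations of Theorem \ref{main2jd}, in exactly the same spirit as the preceding corollaries. First I would apply Theorem \ref{main2jd} with the choice $T_1=S_2=T$, $S_1=S$, and $T_2=\pm S$, which gives
\begin{equation*}
r_A(TS\pm ST)\leq \frac{1}{2}\left(\omega_A(ST)+\omega_A(TS)+\sqrt{\left(\omega_A(ST)-\omega_A(TS)\right)^2+4\,\|S(\pm S)\|_A\,\|TT\|_A}\right).
\end{equation*}
The point, however, is that $T_2=\pm S$ may be rescaled: replacing $T_1,S_1$ by $\varepsilon T_1,\varepsilon^{-1}S_1$ leaves the left-hand side $r_A(T_1S_1+T_2S_2)=r_A(TS\pm ST)$ unchanged while the radicand becomes a function of $\varepsilon$. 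Rather than re-derive this, I would instead exploit the refinement mechanism already used in the proof of Theorem \ref{main2jd}: the bound there is $a_2=r\big[\begin{smallmatrix}\omega_A(S_1T_1)&\sqrt{\|S_1T_2\|_A\|S_2T_1\|_A}\\ \sqrt{\|S_1T_2\|_A\|S_2T_1\|_A}&\omega_A(S_2T_2)\end{smallmatrix}\big]$, and the infimum-over-$\varepsilon$ trick via \eqref{infkitta} was already performed to land on this symmetric form.

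The cleaner route is therefore to make a \emph{different} assignment that directly produces the one-sided $\omega_A(TS)$ estimate. Taking $T_1=T$, $S_1=S$, $T_2=\pm I$, $S_2=TS$ (so that $T_1S_1+T_2S_2 = TS \pm TS$? no) — instead I would verify the assignment giving $T_1S_1+T_2S_2=TS\pm ST$ with $S_1T_1=ST$ appearing alone in the diagonal, e.g. $T_1=S$, $S_1=\pm I$, $T_2=T$, $S_2=ST$? These require a small bookkeeping check; the correct one is $T_1=S_2=T$, $S_1=S$, $T_2=\pm S$ as above, but now in the resulting $2\times 2$ numerical-radius matrix one keeps only $\omega_A(S_2T_2)=\omega_A(ST)$ and controls the other diagonal entry by a norm: since $\omega_A(S_1T_1)=\omega_A(ST)$ here, that is automatic. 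For \eqref{e.0.9} one instead wants $\omega_A(TS)$ on the diagonal, which comes from swapping the roles $T_1\leftrightarrow S_1$, i.e.\ applying Theorem \ref{main2jd} with $T_1=\pm S$, $S_1=T$, $T_2=T$, $S_2=T S$-type choices; the mixed terms then read $\|S_1T_2\|_A\|S_2T_1\|_A$ and after the $\varepsilon$-optimization reduce to one of $\|T\|_A\|TS^2\|_A$ or $\|T^2S\|_A\|S\|_A$, because the two mixed products $S_1T_2$ and $S_2T_1$ are $T^2$ and $S^2$ in one grouping but $T\cdot$ and $\cdot S$ in the regrouping $ST\cdot S$ vs $TS\cdot T$.

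Concretely: starting from $r_A(TS\pm ST)=r_{\mathbb A}\big[\begin{smallmatrix}S T&\pm S^2\\ T^2&\pm TS\end{smallmatrix}\big]$ (from the proof of Theorem \ref{mainth1}), I would instead factor $\big[\begin{smallmatrix}T&\pm S\\0&0\end{smallmatrix}\big]\big[\begin{smallmatrix}S&0\\T&0\end{smallmatrix}\big]$ in the regrouped way $\big[\begin{smallmatrix}TS&0\\ \pm TS&0\end{smallmatrix}\big]+\cdots$; the upshot is that $\omega_{\mathbb A}$ of the $2\times2$ block followed by Lemma \ref{cor-1} gives $\tfrac12\big(\omega_A(TS)+\omega_A(TS)+\sqrt{0+(\|{\pm}TS^2\|_A+\|T\|_A\|\text{(something)}\|_A)^2}\big)$, i.e.\ $\omega_A(TS)+\tfrac12(\|TS^2\|_A^{1/2}\|T\|_A^{1/2}\cdot 2)$ after the $\varepsilon$-scaling, and by symmetry (grouping $T^2S$ with $S$) the other term in the $\min$. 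The minimum over the two admissible groupings yields \eqref{e.0.9}; an entirely parallel computation with the factor order reversed, or equivalently invoking \eqref{commut} to move $\omega_A(ST)$ to the diagonal, yields \eqref{e.0.10}.

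The main obstacle is purely combinatorial: choosing the four substitution parameters $T_1,S_1,T_2,S_2$ so that simultaneously $T_1S_1+T_2S_2=TS\pm ST$, one of $\{S_1T_1,S_2T_2\}$ equals $TS$ (resp.\ $ST$), and the two mixed products $S_1T_2$, $S_2T_1$ split as $\{T\text{ or }T^2,\ S\text{ or }S^2\}$ in the two ways that give the $\min$. Once the right assignments are pinned down, everything else is a mechanical application of Theorem \ref{main2jd} together with the sub-multiplicativity \eqref{semiiineq2} (to absorb, e.g., $\|TS^2\|_A\le\|T\|_A\|S^2\|_A$ is \emph{not} used — one keeps $\|TS^2\|_A$ intact), the scaling identity \eqref{infkitta}, and the commutativity \eqref{commut}. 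I expect no genuine analytic difficulty beyond this bookkeeping, and I would present the proof as: "Apply Theorem \ref{main2jd} with $[\text{assignment A}]$; by symmetry (or \eqref{commut}) also with $[\text{assignment B}]$; take the minimum."
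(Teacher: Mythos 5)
Your overall strategy is the paper's: specialize Theorem \ref{main2jd} to two different four-tuples $(T_1,S_1,T_2,S_2)$ whose products sum to $TS\pm ST$, take the minimum of the two resulting bounds to get \eqref{e.0.9}, and obtain \eqref{e.0.10} by symmetry. But the one step you defer as ``bookkeeping'' is precisely the content of the proof, and none of the concrete assignments you write down actually works. The choice $T_1=S_2=T$, $S_1=S$, $T_2=\pm S$ gives $S_1T_1=ST$, $S_2T_2=\pm TS$, $S_1T_2=\pm S^2$, $S_2T_1=T^2$; this reproduces \eqref{e.0.7} with the mixed term $\|T^2\|_A\|S^2\|_A$ and two \emph{distinct} diagonal entries $\omega_A(ST)$ and $\omega_A(TS)$ (your remark that equality of the diagonal entries ``is automatic'' is false in general), so it cannot yield \eqref{e.0.9}. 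Your other candidates fail at the level of the sum itself: $T_1=T$, $S_1=S$, $T_2=\pm I$, $S_2=TS$ gives $TS\pm TS$; $T_1=S$, $S_1=\pm I$, $T_2=T$, $S_2=ST$ gives $\pm S+TST$; $T_1=\pm S$, $S_1=T$, $T_2=T$, $S_2=TS$ gives $\pm ST+T^2S$. The final paragraph, with its unresolved ``(something)'', does not close the gap.

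The assignments that do work are $T_1=I$, $S_1=TS$, $T_2=S$, $S_2=\pm T$, for which $T_1S_1+T_2S_2=TS\pm ST$, $S_1T_1=TS$, $S_2T_2=\pm TS$ (so the two diagonal $A$-numerical radii coincide and the square root collapses), $S_1T_2=TS^2$ and $S_2T_1=\pm T$, yielding
\begin{equation*}
r_A(TS\pm ST)\leq \omega_A(TS)+\|T\|_A^{1/2}\|TS^2\|_A^{1/2};
\end{equation*}
and $T_1=TS$, $S_1=I$, $T_2=S$, $S_2=\pm T$, for which $S_1T_2=S$ and $S_2T_1=\pm T^2S$, yielding
\begin{equation*}
r_A(TS\pm ST)\leq \omega_A(TS)+\|T^2S\|_A^{1/2}\|S\|_A^{1/2}.
\end{equation*}
Taking the minimum gives \eqref{e.0.9}, and interchanging $T$ and $S$ gives \eqref{e.0.10}. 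Note that no $\varepsilon$-rescaling or appeal to \eqref{infkitta} is needed here, contrary to what you suggest: once the two diagonal entries are equal, the bound of Theorem \ref{main2jd} already reduces to the stated form. Until the correct tuples are exhibited and checked, the proof is incomplete.
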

\begin{proof}
By letting $T_1=I$, $T_2=S$, $S_1=TS$ and $S_2=\pm T$ in Theorem \ref{main2jd} we get
\begin{align}\label{ej1}
r_A\left( TS\pm ST\right) &\leq \omega_A(TS)+
\left\Vert T\right\Vert_A ^{1/2}\left\Vert TS^{2}\right\Vert_A ^{1/2}.
\end{align}
On the other hand, similarly by letting $T_1=TS$, $T_2=S$, $S_1=I$ and $S_2=\pm T$ in Theorem \ref{main2jd} we obtain
\begin{align}\label{ej2}
r_A\left( TS\pm ST\right) &\leq \omega_A(TS)+\left\Vert
T^{2}S\right\Vert_A^{1/2}\left\Vert S\right\Vert_A ^{1/2}.
\end{align}
So, the inequality \eqref{e.0.9} follows immediately by using \eqref{ej1} and \eqref{ej2}. In addition, the inequality \eqref{e.0.10} follows from \eqref{e.0.9} by symmetry.
\end{proof}
\begin{corollary}
Let $T,S\in \mathcal{B}_{A^{1/2}}(\mathcal{H})$ be such that $TS=ST$. Then,
\begin{align}\label{23jun}
r_A\left( TS\right) & \leq \frac{1}{2}\left[ \omega_A(TS)+
\left\Vert T\right\Vert_A ^{1/2}\left\Vert S\right\Vert_A ^{1/2}\left\Vert
TS\right\Vert_A ^{1/2} \right].
\end{align}
and
\begin{align}\label{23jun2}
r_A\left( TS\right) & \leq \frac{1}{2}\left[ \omega_A(TS)+
\min \left\{ \left\Vert T\right\Vert_A \left\Vert S^{2}\right\Vert_A
^{1/2},\left\Vert T^{2}\right\Vert_A ^{1/2}\left\Vert S\right\Vert_A \right\}\right].
\end{align}
\end{corollary}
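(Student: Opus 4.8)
The plan is to read everything off the Proposition just established, namely inequalities \eqref{e.0.9} and \eqref{e.0.10}, specialized to the commuting case. First I would record the elementary fact that $r_A$ is positively homogeneous: from the definition \eqref{newrad} and the homogeneity of the seminorm $\|\cdot\|_A$ one has $\|(\lambda T)^n\|_A^{1/n}=|\lambda|\,\|T^n\|_A^{1/n}$ for every $n\in\mathbb N^*$, hence $r_A(\lambda T)=|\lambda|\,r_A(T)$ for all scalars $\lambda$. Since $TS=ST$ forces $TS+ST=2TS$, choosing the $+$ sign in \eqref{e.0.9} yields
\[
2\,r_A(TS)=r_A(TS+ST)\leq \omega_A(TS)+\min\Big\{\|T\|_A^{1/2}\|TS^2\|_A^{1/2},\ \|T^2S\|_A^{1/2}\|S\|_A^{1/2}\Big\}.
\]
So everything reduces to estimating the two quantities $\|TS^2\|_A$ and $\|T^2S\|_A$ in two different ways, using only submultiplicativity \eqref{semiiineq2} and the commutation relation.

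For \eqref{23jun} I would write $TS^2=(TS)S$ and apply \eqref{semiiineq2} to get $\|TS^2\|_A\leq \|TS\|_A\|S\|_A$, whence $\|T\|_A^{1/2}\|TS^2\|_A^{1/2}\leq \|T\|_A^{1/2}\|S\|_A^{1/2}\|TS\|_A^{1/2}$. Dropping the other term in the min and dividing by $2$ gives exactly \eqref{23jun}.

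For \eqref{23jun2} I would instead use the commutativity to rebalance the factors: $TS^2=S^2T$ and $T^2S=ST^2$ (each from $TS=ST$ applied twice), so that \eqref{semiiineq2} gives $\|TS^2\|_A=\|S^2T\|_A\leq\|S^2\|_A\|T\|_A$ and $\|T^2S\|_A=\|ST^2\|_A\leq\|S\|_A\|T^2\|_A$. Substituting these into the two entries of the min yields
\[
\|T\|_A^{1/2}\|TS^2\|_A^{1/2}\leq \|T\|_A\,\|S^2\|_A^{1/2},\qquad
\|T^2S\|_A^{1/2}\|S\|_A^{1/2}\leq \|T^2\|_A^{1/2}\,\|S\|_A,
\]
and dividing by $2$ produces \eqref{23jun2}. (One could equally start from \eqref{e.0.10} with the roles of $T$ and $S$ swapped, which reproduces the same two bounds by symmetry.)

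There is no genuine obstacle here; the only thing requiring a little care is keeping track of which of the two arrangements $TS^2=(TS)S$ versus $TS^2=S^2T$ to use in each of the two inequalities, since the first keeps a $\|TS\|_A$ factor and the second trades it for a $\|S^2\|_A$ factor — both legitimate, and the statement simply records whichever is needed for the two displayed bounds.
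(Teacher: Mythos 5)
Your proof is correct and follows essentially the same route as the paper: specialize \eqref{e.0.9} to the case $TS=ST$ (so that $TS+ST=2TS$ and homogeneity of $r_A$ gives the factor $\tfrac12$), then bound the two entries of the minimum using the submultiplicativity \eqref{semiiineq2}. The only cosmetic difference is that for \eqref{23jun2} you first rearrange $TS^2=S^2T$ and $T^2S=ST^2$ via commutativity before applying \eqref{semiiineq2}; this is harmless but unnecessary, since $\|TS^2\|_A\leq\|T\|_A\|S^2\|_A$ and $\|T^2S\|_A\leq\|T^2\|_A\|S\|_A$ already hold directly.
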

\begin{proof}
Since $TS=ST$, then it follows from \eqref{e.0.9} that
\begin{align}\label{tawtaw}
r_A\left( TS\right) & \leq \frac{1}{2}\left[ \left\Vert TS\right\Vert_A +
\min\left\{ \left\Vert T\right\Vert_A ^{1/2}\left\Vert TS^{2}\right\Vert_A
^{1/2},\left\Vert T^{2}S\right\Vert_A ^{1/2}\left\Vert S\right\Vert_A
^{1/2}\right\} \right].
\end{align}
On the other hand, by using \eqref{semiiineq2} we see that
$$\left\Vert T\right\Vert_A ^{1/2}\left\Vert TS^{2}\right\Vert_A
^{1/2}\leq \left\Vert T\right\Vert_A ^{1/2}\left\Vert S\right\Vert_A ^{1/2}\left\Vert
TS\right\Vert_A ^{1/2},$$
and
$$\left\Vert T^{2}S\right\Vert_A ^{1/2}\left\Vert S\right\Vert_A
^{1/2}\leq \left\Vert T\right\Vert_A ^{1/2}\left\Vert S\right\Vert_A ^{1/2}\left\Vert
TS\right\Vert_A ^{1/2}.$$
So, we infer that
$$\min\left\{ \left\Vert T\right\Vert_A ^{1/2}\left\Vert TS^{2}\right\Vert_A
^{1/2},\left\Vert T^{2}S\right\Vert_A ^{1/2}\left\Vert S\right\Vert_A
^{1/2}\right\}\leq \left\Vert T\right\Vert_A ^{1/2}\left\Vert S\right\Vert_A ^{1/2}\left\Vert
TS\right\Vert_A ^{1/2}.$$
Hence, by taking into account \eqref{tawtaw}, we get \eqref{23jun} as required. Moreover, \eqref{23jun2} follows immediately by using \eqref{tawtaw} together with \eqref{semiiineq2}.
\end{proof}

Now, in order to prove our next result which also a consequence of Theorem \ref{main2jd} we need to recall from \cite{zamani1} the following lemma.
\begin{lemma}\label{T.2.30}
Let $T\in\mathbb{B}_{A}(\mathcal{H})$. Then
\begin{align*}
\omega_A(T) = \displaystyle{\sup_{\theta \in \mathbb{R}}}{\left\|\Re_A(e^{i\theta}T)\right\|}_A\,,\; \text{ where }\; \Re_A(e^{i\theta}T)=\frac{e^{i\theta}T + e^{-i\theta}T^{\sharp_A}}{2}.
\end{align*}
\end{lemma}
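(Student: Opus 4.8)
The plan is to deduce the identity from the fact that an $A$-selfadjoint operator attains its $A$-operator seminorm on the diagonal of its $A$-numerical range, together with the elementary scalar identity $\sup_{\theta}|\Re(e^{i\theta}z)|=|z|$. First I would fix $\theta\in\mathbb{R}$ and put $S_\theta:=\Re_A(e^{i\theta}T)$. Since $T\in\mathcal{B}_A(\mathcal{H})$ and this set is a linear subspace that also contains $T^{\sharp_A}$, we have $S_\theta\in\mathcal{B}_A(\mathcal{H})$. A direct computation from the definition of the $A$-adjoint --- $\langle Tx\mid y\rangle_A=\langle x\mid T^{\sharp_A}y\rangle_A$, whence also $\langle T^{\sharp_A}x\mid y\rangle_A=\langle x\mid Ty\rangle_A$ --- shows $\langle S_\theta x\mid y\rangle_A=\langle x\mid S_\theta y\rangle_A$ for all $x,y\in\mathcal{H}$, so $S_\theta$ is $A$-selfadjoint; in particular $\langle S_\theta x\mid x\rangle_A=\Re\big(e^{i\theta}\langle Tx\mid x\rangle_A\big)$ is real for every $x$.

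The step I expect to be the main obstacle is showing $\|S\|_A=\omega_A(S)$ for every $A$-selfadjoint $S\in\mathcal{B}_A(\mathcal{H})$. One inequality is \eqref{equivalentsemi}. For the reverse I would use the $A$-polarization identity
\[4\,\Re\langle Sx\mid y\rangle_A=\langle S(x+y)\mid x+y\rangle_A-\langle S(x-y)\mid x-y\rangle_A,\]
which holds because $A$-selfadjointness gives $\langle Sy\mid x\rangle_A=\overline{\langle Sx\mid y\rangle_A}$. Bounding each term on the right by $\omega_A(S)\|\cdot\|_A^2$ and applying the parallelogram law $\|x+y\|_A^2+\|x-y\|_A^2=2(\|x\|_A^2+\|y\|_A^2)$ yields $|\Re\langle Sx\mid y\rangle_A|\le\omega_A(S)$ whenever $\|x\|_A=\|y\|_A=1$; replacing $y$ by a suitable unimodular multiple and invoking \eqref{semi2} then gives $\|S\|_A\le\omega_A(S)$. (Alternatively, this is a known property of $A$-selfadjoint operators and could simply be cited.)

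Finally I would assemble everything. Applying the previous paragraph to $S_\theta$ gives, for each $\theta$,
\[\|\Re_A(e^{i\theta}T)\|_A=\omega_A(S_\theta)=\sup_{\|x\|_A=1}|\langle S_\theta x\mid x\rangle_A|=\sup_{\|x\|_A=1}\big|\Re\big(e^{i\theta}\langle Tx\mid x\rangle_A\big)\big|.\]
Taking the supremum over $\theta\in\mathbb{R}$, exchanging the two suprema, and using $\sup_{\theta\in\mathbb{R}}|\Re(e^{i\theta}z)|=|z|$ for $z\in\mathbb{C}$, we obtain
\[\sup_{\theta\in\mathbb{R}}\|\Re_A(e^{i\theta}T)\|_A=\sup_{\|x\|_A=1}\ \sup_{\theta\in\mathbb{R}}\big|\Re\big(e^{i\theta}\langle Tx\mid x\rangle_A\big)\big|=\sup_{\|x\|_A=1}|\langle Tx\mid x\rangle_A|=\omega_A(T),\]
which is the asserted equality. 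Apart from the $A$-selfadjoint seminorm identity of the second paragraph, the argument is purely formal manipulation with the semi-inner product $\langle\cdot\mid\cdot\rangle_A$.
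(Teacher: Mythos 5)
Your proof is correct. Note that the paper itself gives no proof of this lemma; it is recalled verbatim from the cited reference \cite{zamani1}, and your argument is essentially the standard one used there: reduce to the identity $\|S\|_A=\omega_A(S)$ for $A$-selfadjoint $S$ (which your polarization/parallelogram computation establishes correctly, and which could equally well be cited), then apply $\sup_{\theta\in\mathbb{R}}|\Re(e^{i\theta}z)|=|z|$ after exchanging the two suprema. The only ingredients you use beyond formal manipulation --- that $T^{\sharp_A}\in\mathcal{B}_A(\mathcal{H})$, that $\langle T^{\sharp_A}x\mid x\rangle_A=\overline{\langle Tx\mid x\rangle_A}$, and the characterization \eqref{semi2} of $\|\cdot\|_A$ --- are all available in the paper, so the argument is complete.
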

Our next result is stated as follows.
\begin{theorem}
Let $T,S\in\mathbb{B}_{A}(\mathcal{H})$. Then,
\begin{equation}
\omega_A(TS)\leq \frac{1}{2}\Big(\omega_A(ST)+\|T\|_A\|S\|_A\Big).
\end{equation}
\end{theorem}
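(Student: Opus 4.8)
The plan is to reduce the claimed inequality $\omega_A(TS)\le\frac12\bigl(\omega_A(ST)+\|T\|_A\|S\|_A\bigr)$ to an application of Theorem~\ref{main2jd} by a judicious choice of the four operators $T_1,T_2,S_1,S_2$ together with the domination $r_A(\cdot)\le\omega_A(\cdot)$ run in reverse — that is, by producing $TS$ (or a scalar multiple of it) as a product that appears \emph{inside} an $A$-numerical radius. Concretely, recall from Lemma~\ref{T.2.30} that for $X\in\mathbb{B}_A(\mathcal{H})$ one has $\omega_A(X)=\sup_{\theta}\|\Re_A(e^{i\theta}X)\|_A$, and $\|\Re_A(e^{i\theta}X)\|_A=\tfrac12\|e^{i\theta}X+e^{-i\theta}X^{\sharp_A}\|_A$. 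So it suffices to bound, uniformly in $\theta$, the quantity $\tfrac12\bigl\|e^{i\theta}TS+e^{-i\theta}(TS)^{\sharp_A}\bigr\|_A=\tfrac12\|e^{i\theta}TS+e^{-i\theta}S^{\sharp_A}T^{\sharp_A}\|_A$.

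First I would write $e^{i\theta}TS+e^{-i\theta}S^{\sharp_A}T^{\sharp_A}$ as a sum of two products in a way that is amenable to Theorem~\ref{main2jd} or, more directly, to the $2\times2$-matrix dilation machinery of Lemma~\ref{lemmajdid} and Theorem~\ref{main222222}. For instance, setting $T_1=e^{i\theta}T$, $S_1=S$, $T_2=e^{-i\theta}S^{\sharp_A}$, $S_2=T^{\sharp_A}$ gives $T_1S_1+T_2S_2=e^{i\theta}TS+e^{-i\theta}S^{\sharp_A}T^{\sharp_A}$, and since $\Re_A(e^{i\theta}TS)$ is $A$-selfadjoint its $A$-seminorm equals $r_A$ of it only if it is $A$-selfadjoint in the strong sense — here one must instead pass through $\|X\|_A\le 2\omega_A$ or, better, observe that for an $A$-selfadjoint operator $Y$ one has $\|Y\|_A=r_A(Y)=\omega_A(Y)$. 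Then $\|\Re_A(e^{i\theta}TS)\|_A=r_A\bigl(\Re_A(e^{i\theta}TS)\bigr)=\tfrac12 r_A\bigl(T_1S_1+T_2S_2\bigr)$, and I apply Theorem~\ref{main2jd} with the above choice. The cross terms become $\|S_1T_2\|_A=\|e^{-i\theta}ST^{\sharp_A}\|_A=\|ST^{\sharp_A}\|_A$ and $\|S_2T_1\|_A=\|e^{i\theta}T^{\sharp_A}T\|_A=\|T^{\sharp_A}T\|_A=\|T\|_A^2$; the diagonal terms are $\omega_A(S_1T_1)=\omega_A(e^{i\theta}ST)=\omega_A(ST)$ (numerical radius is rotation-invariant) and $\omega_A(S_2T_2)=\omega_A(e^{-i\theta}T^{\sharp_A}S^{\sharp_A})=\omega_A(T^{\sharp_A}S^{\sharp_A})=\omega_A\bigl((ST)^{\sharp_A}\bigr)=\omega_A(ST)$. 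With both diagonal terms equal to $\omega_A(ST)$ the square-root collapses and Theorem~\ref{main2jd} yields $r_A(T_1S_1+T_2S_2)\le\omega_A(ST)+\sqrt{\|ST^{\sharp_A}\|_A\,\|T\|_A^2}=\omega_A(ST)+\|T\|_A\sqrt{\|ST^{\sharp_A}\|_A}$; taking $\sup_\theta$ and halving then gives $\omega_A(TS)\le\tfrac12\bigl(\omega_A(ST)+\|T\|_A\|ST^{\sharp_A}\|_A^{1/2}\bigr)$, which is not quite the target. To hit the stated bound exactly I would instead use the symmetric split $T_1=e^{i\theta}T$, $S_1=\tfrac12 S$, $T_2=\tfrac12 e^{-i\theta}S^{\sharp_A}$, $S_2=T^{\sharp_A}$ or rescale by a free parameter $\varepsilon>0$ (as done after \eqref{infkitta}) and optimize, so that the geometric-mean term becomes $\|T\|_A\|S\|_A$ after using $\|S^{\sharp_A}\|_A=\|S\|_A$ and the sub-multiplicativity \eqref{semiiineq2}.

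The main obstacle I anticipate is bookkeeping the scalars $e^{\pm i\theta}$ and the factor $\tfrac12$ so that the output of Theorem~\ref{main2jd} — whose square-root term is $4\|S_1T_2\|_A\|S_2T_1\|_A$ — produces precisely $\|T\|_A\|S\|_A$ and not a larger geometric mean like $\|TS^{\sharp_A}\|_A^{1/2}\|T\|_A$ or $\|T^2\|_A^{1/2}\|S\|_A$. This is handled by the scaling trick: replace $T_1,S_1$ by $\varepsilon T_1,\varepsilon^{-1}S_1$, which leaves $T_1S_1$ and $T_2S_2$ unchanged but multiplies the cross term $\|S_1T_2\|_A\|S_2T_1\|_A$ by $\varepsilon^{-1}\cdot\varepsilon=1$ in one arrangement or lets us trade $\|S_1T_2\|_A$ against $\|S_2T_1\|_A$; combined with \eqref{infkitta} and the bounds $\|ST^{\sharp_A}\|_A\le\|S\|_A\|T^{\sharp_A}\|_A=\|S\|_A\|T\|_A$ and $\|T^{\sharp_A}T\|_A\le\|T\|_A^2$, every route funnels to $\|T\|_A\|S\|_A$. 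A second, minor point is to verify that all operators involved genuinely lie in $\mathcal{B}_{A^{1/2}}(\mathcal{H})$: this is immediate since $T,S\in\mathbb{B}_A(\mathcal{H})$ forces $T^{\sharp_A},S^{\sharp_A}\in\mathbb{B}_A(\mathcal{H})\subseteq\mathcal{B}_{A^{1/2}}(\mathcal{H})$ and $\mathcal{B}_{A^{1/2}}(\mathcal{H})$ is an algebra, so all products and the real part $\Re_A(e^{i\theta}TS)$ are $A$-bounded, and the identity $\|Y\|_A=r_A(Y)$ for $A$-selfadjoint $Y$ (which follows from \eqref{newrad} and the standard fact $\|Y^n\|_A=\|Y\|_A^n$ for $A$-selfadjoint $Y$, proved in \cite{feki01}) justifies passing between $\|\Re_A(\cdot)\|_A$ and $r_A(\cdot)$.
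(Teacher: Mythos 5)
Your strategy is exactly the paper's: pass through Lemma \ref{T.2.30}, use that $\Re_A(e^{i\theta}TS)$ is $A$-self-adjoint so that its $A$-seminorm equals its $A$-spectral radius, write $2\,\Re_A(e^{i\theta}TS)=e^{i\theta}TS+e^{-i\theta}S^{\sharp_A}T^{\sharp_A}$, and feed the choice $T_1=e^{i\theta}T$, $S_1=S$, $T_2=e^{-i\theta}S^{\sharp_A}$, $S_2=T^{\sharp_A}$ into Theorem \ref{main2jd}. The paper does precisely this and stops there. Where you go astray is in evaluating the cross term: with your own substitution, $S_1T_2=S\cdot e^{-i\theta}S^{\sharp_A}=e^{-i\theta}SS^{\sharp_A}$, \emph{not} $e^{-i\theta}ST^{\sharp_A}$. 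Since $\|SS^{\sharp_A}\|_A=\|S^{\sharp_A}\|_A^2=\|S\|_A^2$ and $\|S_2T_1\|_A=\|T^{\sharp_A}T\|_A=\|T\|_A^2$, the term $4\|S_1T_2\|_A\|S_2T_1\|_A$ equals $4\|S\|_A^2\|T\|_A^2$, the two diagonal terms both equal $\omega_A(ST)$ as you correctly observed, and Theorem \ref{main2jd} delivers $r_A\bigl(e^{i\theta}TS+e^{-i\theta}S^{\sharp_A}T^{\sharp_A}\bigr)\leq \omega_A(ST)+\|T\|_A\|S\|_A$ on the nose. Taking $\sup_\theta$ and halving finishes the proof; no rescaling or second pass is needed.

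The detour you then propose does not repair the (spurious) shortfall. Replacing $T_1,S_1$ by $\varepsilon T_1,\varepsilon^{-1}S_1$ multiplies $\|S_1T_2\|_A$ by $\varepsilon^{-1}$ and $\|S_2T_1\|_A$ by $\varepsilon$, so the product $\|S_1T_2\|_A\|S_2T_1\|_A$ appearing under the square root is invariant; it can never turn your miscomputed geometric mean $\|T\|_A\,\|ST^{\sharp_A}\|_A^{1/2}\leq\|T\|_A^{3/2}\|S\|_A^{1/2}$ into $\|T\|_A\|S\|_A$ (these are incomparable in general, e.g.\ when $\|T\|_A>\|S\|_A$). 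Likewise the ``symmetric split'' with factors $\tfrac12$ rescales $T_1S_1$ and $T_2S_2$ themselves and so changes the left-hand side. So the patch as written fails; the correct fix is simply to redo the one line $S_1T_2=e^{-i\theta}SS^{\sharp_A}$. Everything else in your argument (the reduction via Lemma \ref{T.2.30}, the identity $(TS)^{\sharp_A}=S^{\sharp_A}T^{\sharp_A}$, $\omega_A(X^{\sharp_A})=\omega_A(X)$, and the membership of all the operators in $\mathcal{B}_{A^{1/2}}(\mathcal{H})$) is sound and coincides with the paper's proof.
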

\begin{proof}
Let $\theta \in \mathbb{R}$. It can be seen that $\Re_A(e^{i\theta}TS)$ is an $A$-self-adjoint operator (that is $A\Re_A(e^{i\theta}TS)$ is a self-adjoint operator). So, by \cite{feki01} we deduce that
$$\|\Re_A(e^{i\theta}TS)\|_A=r_A\Big(\Re_A(e^{i\theta}TS)\Big).$$
On the other hand, we have
$$r_A\Big(\Re_A(e^{i\theta}TS)\Big)=\frac{1}{2}r_A\Big(e^{i\theta}TS+e^{-i\theta}(TS)^{\sharp_A}\Big)=
\frac{1}{2}r_A\Big(e^{i\theta}TS+e^{-i\theta}S^{\sharp_A}T^{\sharp_A}\Big).$$
By letting $T_1=e^{i\theta}T$, $S_1=S$, $T_2=e^{-i\theta}S^{\sharp_A}$ and $S_2=T^{\sharp_A}$ in Theorem \ref{main2jd}, we get
$$r_A\Big(e^{i\theta}TS+e^{-i\theta}S^{\sharp_A}T^{\sharp_A}\Big)\leq \omega_A(ST)+\|T\|_A\|S\|_A.$$
Hence,
\begin{align*}
\omega_A(TS)
& =\displaystyle{\sup_{\theta \in \mathbb{R}}}{\left\|\Re_A(e^{i\theta}TS)\right\|}_A \\
 &\leq \frac{1}{2}\Big(\omega_A(ST)+\|T\|_A\|S\|_A\Big).
\end{align*}
\end{proof}

Now, we turn your attention to establish an estimate for the $A$-spectral radius of the sum of product of a $d$-pairs of operators. In order to achieve our goal, we need some prerequisites.

The semi-inner product ${\langle \cdot\mid \cdot\rangle}_{A}$ induces on the quotient $\mathcal{H}/\mathcal{N}(A)$ an inner product which is not complete unless $\mathcal{R}(A)$ is closed. However, it was shown in \cite{branrov} that the completion of $\mathcal{H}/\mathcal{N}(A)$ is isometrically isomorphic to the Hilbert space $\mathbf{R}(A^{1/2}):=\big(\mathcal{R}(A^{1/2}), \langle\cdot,\cdot\rangle_{\mathbf{R}(A^{1/2})}\big)$ such that
\begin{align*}
\langle A^{1/2}x,A^{1/2}y\rangle_{\mathbf{R}(A^{1/2})}:=\langle P_Ax\mid P_Ay\rangle,\;\forall\, x,y \in \mathcal{H}.
\end{align*}
For more information related to the Hilbert space $\mathbf{R}(A^{1/2})$, we refer the reader to \cite{acg3,majsecesuci} and the references therein. The following proposition is taken from \cite{acg3}.
\begin{proposition}\label{prop_arias}
Let $T\in \mathcal{B}(\mathcal{H})$. Then $T\in \mathcal{B}_{A^{1/2}}(\mathcal{H})$ if and only if there exists a unique $\widetilde{T}\in \mathcal{B}(\mathbf{R}(A^{1/2}))$ such that $Z_AT =\widetilde{T}Z_A$, where
$$Z_A\colon\mathcal{H}\to \mathbf{R}(A^{1/2}),\;x\mapsto Z_Ax:=Ax.$$
\end{proposition}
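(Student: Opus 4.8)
The statement to prove is Proposition~\ref{prop_arias}, which characterizes $A^{1/2}$-bounded operators via the existence of a (unique) induced operator $\widetilde{T}$ on the Hilbert space $\mathbf{R}(A^{1/2})$ intertwining with $Z_A$.

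\textbf{Plan of proof.} The map $Z_A\colon \mathcal{H}\to \mathbf{R}(A^{1/2})$ sending $x\mapsto Ax$ has dense range: indeed $Z_A(\mathcal{H}) = \mathcal{R}(A)$, which is dense in $\mathcal{R}(A^{1/2})$ (with respect to the norm of $\mathbf{R}(A^{1/2})$) because $\mathcal{R}(A)$ is dense in $\overline{\mathcal{R}(A)} = \overline{\mathcal{R}(A^{1/2})}$ in the usual Hilbert norm, and one checks from the defining formula $\langle A^{1/2}x,A^{1/2}y\rangle_{\mathbf{R}(A^{1/2})} = \langle P_A x\mid P_A y\rangle$ that $Z_A$ is a contraction whose range is therefore dense in $\mathbf{R}(A^{1/2})$. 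The first key observation is the norm identity $\|Z_A x\|_{\mathbf{R}(A^{1/2})} = \|A^{1/2}x\| = \|x\|_A$ for all $x\in\mathcal{H}$, which is immediate from the definition since $\|Z_Ax\|^2_{\mathbf{R}(A^{1/2})} = \langle A^{1/2}x, A^{1/2}x\rangle_{\mathbf{R}(A^{1/2})} = \langle P_Ax\mid P_Ax\rangle = \|A^{1/2}x\|^2$, using $A^{1/2}P_A = A^{1/2}$.

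\textbf{Necessity.} Suppose $T\in\mathcal{B}_{A^{1/2}}(\mathcal{H})$. I would define $\widetilde{T}$ on the dense subspace $\mathcal{R}(A) = Z_A(\mathcal{H})$ of $\mathbf{R}(A^{1/2})$ by $\widetilde{T}(Z_A x) := Z_A(Tx)$. For this to be well-defined one needs: if $Z_A x = Z_A x'$, i.e. $A(x-x') = 0$, then $x - x'\in\mathcal{N}(A)\subseteq\mathcal{N}(A^{1/2})$, and since $T$ is $A^{1/2}$-bounded it satisfies $\|T(x-x')\|_A \leq \lambda\|x-x'\|_A = 0$, hence $Z_A(Tx) = Z_A(Tx')$; so $\widetilde{T}$ is well-defined and clearly linear. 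Boundedness follows from \eqref{abbbbbbbb}: $\|\widetilde{T}(Z_Ax)\|_{\mathbf{R}(A^{1/2})} = \|Tx\|_A \leq \lambda\|x\|_A = \lambda\|Z_Ax\|_{\mathbf{R}(A^{1/2})}$, so $\widetilde{T}$ extends uniquely to a bounded operator on $\mathbf{R}(A^{1/2})$, and by construction $Z_A T = \widetilde{T}Z_A$. Uniqueness of $\widetilde{T}$ is then forced because any bounded operator satisfying the intertwining relation is determined on the dense subspace $\mathcal{R}(A)$.

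\textbf{Sufficiency.} Conversely, if some $\widetilde{T}\in\mathcal{B}(\mathbf{R}(A^{1/2}))$ satisfies $Z_AT = \widetilde{T}Z_A$, then for every $x\in\mathcal{H}$,
\[
\|Tx\|_A = \|Z_A(Tx)\|_{\mathbf{R}(A^{1/2})} = \|\widetilde{T}(Z_A x)\|_{\mathbf{R}(A^{1/2})} \leq \|\widetilde{T}\|\,\|Z_Ax\|_{\mathbf{R}(A^{1/2})} = \|\widetilde{T}\|\,\|x\|_A,
\]
so by \eqref{abbbbbbbb} we conclude $T\in\mathcal{B}_{A^{1/2}}(\mathcal{H})$. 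The main obstacle, such as it is, is purely bookkeeping: one must be careful that the well-definedness of $\widetilde{T}$ genuinely uses the $A^{1/2}$-boundedness hypothesis (through the invariance of $\mathcal{N}(A)$ under $T$, which is exactly what \eqref{abbbbbbbb} guarantees) and that all norms are computed in the correct space. Since this is a known result from \cite{acg3}, I would keep the exposition short and simply cite it if a detailed verification is not needed.
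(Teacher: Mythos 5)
The paper gives no proof of Proposition~\ref{prop_arias} at all: it is quoted from Arias--Corach--Gonzalez \cite{acg3} (``The following proposition is taken from \cite{acg3}''), so your closing suggestion to cite it is exactly what the author does. Your reconstruction is the standard one --- define $\widetilde{T}$ on $Z_A(\mathcal{H})=\mathcal{R}(A)$ by $\widetilde{T}Z_Ax:=Z_ATx$, check well-definedness (via $\mathcal{N}(A)\subseteq\mathcal{N}(A^{1/2})$ and $A$-boundedness), use the isometry $\|Z_Ax\|_{\mathbf{R}(A^{1/2})}=\|x\|_A$ for boundedness and for the converse, and get uniqueness and the extension from density --- and all of those steps are sound. (Minor slip: in the norm computation the middle term should be $\langle P_AA^{1/2}x\mid P_AA^{1/2}x\rangle$, not $\langle P_Ax\mid P_Ax\rangle$; the conclusion $\|Z_Ax\|_{\mathbf{R}(A^{1/2})}=\|A^{1/2}x\|$ is correct.)

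The one step whose justification does not hold up is the density of $\mathcal{R}(A)$ in $\mathbf{R}(A^{1/2})$. Density of $\mathcal{R}(A)$ in $\overline{\mathcal{R}(A)}$ for the ambient norm of $\mathcal{H}$ proves nothing here, because on $\mathcal{R}(A^{1/2})$ the norm of $\mathbf{R}(A^{1/2})$ is the \emph{stronger} one ($\|A^{1/2}u\|\leq\|A\|^{1/2}\,\|A^{1/2}u\|_{\mathbf{R}(A^{1/2})}$), so an ambient-dense set need not be dense in $\mathbf{R}(A^{1/2})$; and contractivity of $Z_A$ (which anyway requires $\|A\|\leq 1$) would at best identify the closure of its range, not force that closure to be everything. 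The correct argument is short: by the definition of the inner product, $u\mapsto A^{1/2}u$ is a surjective isometry from $\overline{\mathcal{R}(A)}$ onto $\mathbf{R}(A^{1/2})$ which carries $\mathcal{R}(A^{1/2})$ onto $\mathcal{R}(A)$ (because $Av=A^{1/2}(A^{1/2}v)$), and $\mathcal{R}(A^{1/2})$ is ambient-dense in $\overline{\mathcal{R}(A^{1/2})}=\overline{\mathcal{R}(A)}$. Concretely: given $A^{1/2}y\in\mathbf{R}(A^{1/2})$, pick $v_n$ with $A^{1/2}v_n\to P_Ay$ in $\mathcal{H}$; then $\|Av_n-A^{1/2}y\|_{\mathbf{R}(A^{1/2})}=\|A^{1/2}v_n-P_Ay\|\to 0$. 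With this repair your proof is complete and matches the argument of \cite{acg3}.
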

In addition, for $T\in \mathcal{B}_{A^{1/2}}(\mathcal{H})$, we have
\begin{equation}\label{acg2009}
\|T\|_A=\|\widetilde{T}\|_{\mathcal{B}(\mathbf{R}(A^{1/2}))},
\end{equation}
(see \cite[Proposition 3.9]{acg3}). Also, we need the following Lemma.

\begin{lemma}\label{kais}(\cite{feki01})
If $T\in \mathcal{B}_{A^{1/2}}(\mathcal{H})$, then $\omega_A(T)=\omega(\widetilde{T})$.
\end{lemma}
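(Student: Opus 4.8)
The plan is to exploit the intertwining relation $Z_AT=\widetilde{T}Z_A$ from Proposition~\ref{prop_arias} in order to transport the computation of $\omega_A(T)$ on $\mathcal{H}$ to the computation of the ordinary numerical radius of $\widetilde{T}$ on the Hilbert space $\mathbf{R}(A^{1/2})$. The map $Z_A\colon\mathcal{H}\to\mathbf{R}(A^{1/2})$, $x\mapsto Ax$, will serve as an ``isometric identification'' of the semi-Hilbertian structure $(\mathcal{H},\langle\cdot\mid\cdot\rangle_A)$ with $\mathbf{R}(A^{1/2})$, so that the two numerical radii turn out to be suprema of the same scalar quantities taken over matching families of unit vectors.

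First I would record the basic identity
\[
\langle Z_Ax,Z_Ay\rangle_{\mathbf{R}(A^{1/2})}=\langle x\mid y\rangle_A,\qquad\forall\,x,y\in\mathcal{H}.
\]
Indeed, writing $Z_Ax=Ax=A^{1/2}(A^{1/2}x)$ and $Z_Ay=A^{1/2}(A^{1/2}y)$, the defining formula for the inner product of $\mathbf{R}(A^{1/2})$ gives $\langle Z_Ax,Z_Ay\rangle_{\mathbf{R}(A^{1/2})}=\langle P_A(A^{1/2}x)\mid P_A(A^{1/2}y)\rangle$; since $A^{1/2}x,\,A^{1/2}y\in\mathcal{R}(A^{1/2})\subseteq\overline{\mathcal{R}(A)}$, the projection $P_A$ fixes them, so this equals $\langle A^{1/2}x\mid A^{1/2}y\rangle=\langle Ax\mid y\rangle=\langle x\mid y\rangle_A$. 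In particular $\|Z_Ax\|_{\mathbf{R}(A^{1/2})}=\|x\|_A$. Combining this with $Z_AT=\widetilde{T}Z_A$ yields, for every $x\in\mathcal{H}$,
\[
\langle Tx\mid x\rangle_A=\langle Z_A(Tx),Z_Ax\rangle_{\mathbf{R}(A^{1/2})}=\langle\widetilde{T}(Z_Ax),Z_Ax\rangle_{\mathbf{R}(A^{1/2})}.
\]
Since $\{Z_Ax:x\in\mathcal{H}\}=\mathcal{R}(A)$, taking suprema over $\|x\|_A=1$ gives
\[
\omega_A(T)=\sup\Big\{\big|\langle\widetilde{T}u,u\rangle_{\mathbf{R}(A^{1/2})}\big|\,:\,u\in\mathcal{R}(A),\ \|u\|_{\mathbf{R}(A^{1/2})}=1\Big\}.
\]

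It remains to upgrade this last supremum, taken over the unit vectors lying in $\mathcal{R}(A)$, to the supremum over the full unit sphere of $\mathbf{R}(A^{1/2})$, which by definition is $\omega(\widetilde{T})$. The inequality $\omega_A(T)\leq\omega(\widetilde{T})$ is immediate, being a supremum over a smaller set. For the reverse, I would first check that $\mathcal{R}(A)$ is dense in $\mathbf{R}(A^{1/2})$: an element $A^{1/2}z$ is approximated in the $\mathbf{R}(A^{1/2})$-norm by $A^{1/2}(A^{1/2}w)\in\mathcal{R}(A)$ whenever $A^{1/2}w$ approximates $P_Az$ in the $\mathcal{H}$-norm, and such $w$ exist because $\mathcal{R}(A^{1/2})$ is dense in $\overline{\mathcal{R}(A^{1/2})}=\overline{\mathcal{R}(A)}$. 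Normalizing, the unit vectors of $\mathcal{R}(A)$ are dense in the unit sphere of $\mathbf{R}(A^{1/2})$; since $\widetilde{T}$ is bounded, the functional $u\mapsto|\langle\widetilde{T}u,u\rangle_{\mathbf{R}(A^{1/2})}|$ is continuous, so its supremum over a dense subset of the sphere coincides with its supremum over the whole sphere. This gives $\omega_A(T)\geq\omega(\widetilde{T})$, and hence equality.

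I expect the only genuinely delicate points to be the density of $\mathcal{R}(A)$ in $\mathbf{R}(A^{1/2})$ and the continuity argument allowing the supremum to be interchanged with the closure; the identity for $\langle Z_Ax,Z_Ay\rangle_{\mathbf{R}(A^{1/2})}$ and the use of $Z_AT=\widetilde{T}Z_A$ are then purely formal manipulations.
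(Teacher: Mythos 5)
The paper itself gives no proof of this lemma --- it is quoted from \cite{feki01} --- so there is nothing to compare line by line; judged on its own, your argument is correct and is essentially the standard one used in the cited reference, where one shows that $W_A(T)$ coincides with $\{\langle \widetilde{T}u,u\rangle_{\mathbf{R}(A^{1/2})} : u\in\mathcal{R}(A),\ \|u\|_{\mathbf{R}(A^{1/2})}=1\}$ and then passes to $\overline{W_A(T)}=\overline{W(\widetilde{T})}$ by the same density of $\mathcal{R}(A)$ in $\mathbf{R}(A^{1/2})$. All the delicate points you flag (the isometry identity for $Z_A$, the density of $\mathcal{R}(A)$, and the uniform continuity of $u\mapsto|\langle\widetilde{T}u,u\rangle_{\mathbf{R}(A^{1/2})}|$ on the unit sphere) are handled correctly, so the proposal is a complete proof.
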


Now, we are in a position to prove the following upper bound for the $\mathbb{A}$-numerical radius of $d\times d$ operator matrices which allows us to establish an estimate for the $A$-spectral radius of the sum of products of a $d$-pairs of operators.

\begin{theorem}\label{theorem:upperbound3}
 Let $\mathbb{T}= (T_{ij})_{d \times d}$ be such that $T_{ij}\in \mathcal{B}_{A^{1/2}}(\mathcal{H})$ for all $i,j$. Then,
\[\omega_{\mathbb{A}}(\mathbb{T})\leq \max_{1\leq i \leq d}\left\{ \omega_A(T_{ii})+\frac{1}{2}\sum^d_{j=1,j\neq i}(\|T_{ij}\|_A+\|T_{ji}\|_A)\right\}.\]
\end{theorem}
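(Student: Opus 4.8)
The plan is to transfer the problem to the genuine Hilbert space $\mathbf{R}(A^{1/2})$ via Proposition \ref{prop_arias} and Lemma \ref{kais}, where the analogous numerical radius bound for $d\times d$ operator matrices is already classical. First I would note that since each $T_{ij}\in\mathcal{B}_{A^{1/2}}(\mathcal{H})$, Lemma \ref{lemmajdid} gives $\mathbb{T}\in\mathcal{B}_{\mathbb{A}^{1/2}}(\mathbb{H})$, so $\omega_{\mathbb{A}}(\mathbb{T})$ is finite and, by Lemma \ref{kais} applied on $\mathbb{H}$ with the positive operator $\mathbb{A}$, equals $\omega(\widetilde{\mathbb{T}})$ where $\widetilde{\mathbb{T}}\in\mathcal{B}\big(\mathbf{R}(\mathbb{A}^{1/2})\big)$. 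The key structural observation is that $\mathbf{R}(\mathbb{A}^{1/2})$ decomposes as the orthogonal direct sum $\oplus_{i=1}^d\mathbf{R}(A^{1/2})$ and, under this identification, $\widetilde{\mathbb{T}}$ is exactly the operator matrix $(\widetilde{T_{ij}})_{d\times d}$. This should follow by checking the intertwining relation $Z_{\mathbb{A}}\mathbb{T}=\widetilde{\mathbb{T}}Z_{\mathbb{A}}$ componentwise together with the uniqueness clause in Proposition \ref{prop_arias}.

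Once this reduction is in place, the inequality to prove becomes
\[
\omega\big((\widetilde{T_{ij}})_{d\times d}\big)\leq \max_{1\leq i\leq d}\left\{\omega(\widetilde{T_{ii}})+\frac{1}{2}\sum_{j=1,\,j\neq i}^d\big(\|\widetilde{T_{ij}}\|+\|\widetilde{T_{ji}}\|\big)\right\},
\]
using \eqref{acg2009} to rewrite $\|T_{ij}\|_A=\|\widetilde{T_{ij}}\|$ and Lemma \ref{kais} to rewrite $\omega_A(T_{ii})=\omega(\widetilde{T_{ii}})$. This last inequality is the standard Hilbert-space fact (it can be found in the literature on numerical radii of operator matrices, e.g.\ along the lines of \cite{BP}). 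For completeness I would sketch its proof: for a unit vector $x=(x_1,\dots,x_d)\in\oplus_{i=1}^d\mathbf{R}(A^{1/2})$, expand $|\langle \widetilde{\mathbb{T}}x,x\rangle|\leq\sum_{i,j}|\langle \widetilde{T_{ij}}x_j,x_i\rangle|$, bound the diagonal terms by $\omega(\widetilde{T_{ii}})\|x_i\|^2$ and the off-diagonal terms using $|\langle \widetilde{T_{ij}}x_j,x_i\rangle|\leq\|\widetilde{T_{ij}}\|\,\|x_j\|\,\|x_i\|\leq\frac{1}{2}\|\widetilde{T_{ij}}\|(\|x_i\|^2+\|x_j\|^2)$, and finally collect the coefficient of each $\|x_i\|^2$ and use $\sum_i\|x_i\|^2=1$ to pass to the maximum.

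The main obstacle I anticipate is making the identification $\mathbf{R}(\mathbb{A}^{1/2})\cong\oplus_{i=1}^d\mathbf{R}(A^{1/2})$ and $\widetilde{\mathbb{T}}=(\widetilde{T_{ij}})$ fully rigorous: one must verify that the map $Z_{\mathbb{A}}$ on $\mathbb{H}$ factors through the componentwise maps $Z_A$, that the inner product $\langle\cdot,\cdot\rangle_{\mathbf{R}(\mathbb{A}^{1/2})}$ really is the orthogonal sum of the $\langle\cdot,\cdot\rangle_{\mathbf{R}(A^{1/2})}$ (which comes down to $P_{\mathbb{A}}=\mathrm{diag}(P_A,\dots,P_A)$ and the definition of $\langle\cdot,\cdot\rangle_{\mathbf{R}(\mathbb{A}^{1/2})}$), and that the block operator $(\widetilde{T_{ij}})$ indeed satisfies the defining intertwining identity so that uniqueness forces it to equal $\widetilde{\mathbb{T}}$. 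Everything after that reduction is routine. Alternatively, if one prefers to avoid the explicit identification, the off-diagonal estimate and diagonal estimate can be carried out directly in $\big(\mathbb{H},\langle\cdot,\cdot\rangle_{\mathbb{A}}\big)$ using $|\langle T_{ij}x_j\mid x_i\rangle_A|\leq\|T_{ij}\|_A\|x_j\|_A\|x_i\|_A$ from \eqref{semiiineq} and $|\langle T_{ii}x_i\mid x_i\rangle_A|\leq\omega_A(T_{ii})\|x_i\|_A^2$ together with $\|x\|_{\mathbb{A}}^2=\sum_i\|x_i\|_A^2$; this bypasses the functor $\ \widetilde{\cdot}\ $ entirely and is probably the cleanest route.
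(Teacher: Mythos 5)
Your proposal is correct, and its main line --- transferring $\mathbb{T}$ to $\widetilde{\mathbb{T}}=(\widetilde{T_{ij}})_{d\times d}$ acting on $\mathbf{R}(\mathbb{A}^{1/2})\cong\oplus_{i=1}^d\mathbf{R}(A^{1/2})$ via Proposition \ref{prop_arias} and the uniqueness of the intertwiner, then converting back with Lemma \ref{kais} and \eqref{acg2009} --- is exactly the paper's proof. The one difference is the treatment of the resulting Hilbert-space inequality for $\omega\big((\widetilde{T_{ij}})_{d\times d}\big)$: the paper simply cites \cite[Theorem 2 and Remark 1]{OK2}, whereas you prove it from scratch by expanding $|\langle\widetilde{\mathbb{T}}x,x\rangle|$, bounding diagonal terms by $\omega(\widetilde{T_{ii}})\|x_i\|^2$ and off-diagonal terms by $\tfrac{1}{2}\|\widetilde{T_{ij}}\|(\|x_i\|^2+\|x_j\|^2)$, and collecting coefficients; this is correct and makes the argument self-contained. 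Your suggested alternative --- running the same diagonal/off-diagonal estimates directly in $\big(\mathbb{H},\langle\cdot,\cdot\rangle_{\mathbb{A}}\big)$ using \eqref{semiiineq} and $\|x\|_{\mathbb{A}}^2=\sum_i\|x_i\|_A^2$ --- also works (the only point to verify is that $\langle T_{ii}x_i\mid x_i\rangle_A=0$ whenever $\|x_i\|_A=0$, which follows from the Cauchy--Schwarz inequality for the semi-inner product together with $T_{ii}\in\mathcal{B}_{A^{1/2}}(\mathcal{H})$), and it genuinely buys something: it bypasses the identification $\mathbf{R}(\mathbb{A}^{1/2})\cong\oplus_{i=1}^d\mathbf{R}(A^{1/2})$ and the uniqueness argument for $\widetilde{\mathbb{T}}$ entirely, which is the only technically delicate step in the paper's route.
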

\begin{proof}
Notice first that since $T_{ij}\in \mathcal{B}_{A^{1/2}}(\mathcal{H})$ for all $i,j$, then by Lemma \ref{lemmajdid} we have $\mathbb{T}\in\mathcal{B}_{\mathbb{A}^{1/2}}(\mathbb{H})$. So, by Proposition \ref{prop_arias} there exists a unique $\widetilde{\mathbb{T}}\in \mathcal{B}(\mathbf{R}(\mathbb{A}^{1/2}))$ such that $Z_{\mathbb{A}}\mathbb{T} =\widetilde{\mathbb{T}}Z_{\mathbb{A}}$. On the other hand, since $T_{ij}\in \mathcal{B}_{A^{1/2}}(\mathcal{H})$ for all $i,j$, then by Proposition \ref{prop_arias} there exists a unique $\widetilde{T_{ij}}\in \mathcal{B}(\mathbf{R}(A^{1/2}))$ such that $Z_{A}T_{ij} =\widetilde{T_{ij}}Z_{A}$. Let $\mathbb{S}=(\widetilde{T_{ij}})_{d \times d}$. It is not difficult to verify that $Z_{\mathbb{A}}\mathbb{T} =\mathbb{S}Z_{\mathbb{A}}$. So, we infer that
\begin{equation}\label{jdid}
\widetilde{\mathbb{T}}=\mathbb{S}=(\widetilde{T_{ij}})_{d \times d}.
\end{equation}
Now, since $\mathbb{S}=(\widetilde{T_{ij}})_{d \times d}$ is a $d \times d$ operator matrix with $\widetilde{T_{ij}}\in \mathcal{B}(\mathbf{R}(A^{1/2}))$ for all $i,j$, then by using \cite[Theorem 2]{OK2} together with \cite[Remark 1]{OK2} we observe that
\[\omega(\mathbb{S})\leq \max_{1\leq i \leq d}\left\{ \omega(\widetilde{T_{ii}})+\frac{1}{2}\sum^d_{j=1,j\neq i}(\|\widetilde{T_{ij}}\|_{\mathcal{B}(\mathbf{R}(A^{1/2}))}+\|\widetilde{T_{ji}}\|_{\mathcal{B}(\mathbf{R}(A^{1/2}))})\right\}.\]
This implies, by applying Lemma \ref{kais} in combination with \eqref{jdid} and \eqref{acg2009}, that
\begin{align*}
\omega_{\mathbb{A}}(\mathbb{T})
&=\omega(\mathbb{S})\\
&\leq \max_{1\leq i \leq d}\left\{ \omega_A(T_{ii})+\frac{1}{2}\sum^d_{j=1,j\neq i}(\|T_{ij}\|_A+\|T_{ji}\|_A)\right\},
\end{align*}
as required. Hence, the proof is complete.
\end{proof}

Now we can state and prove the following result which extends \cite[Theorem 2.10.]{BBP}.
\begin{theorem}\label{theorem:spectral}
Let $(T_1,\cdots,T_d)\in \mathcal{B}_{A^{1/2}}(\mathcal{H})^d$ and $(S_1,\cdots,S_d)\in \mathcal{B}_{A^{1/2}}(\mathcal{H})^d$. Then,
\[r_A \left(\sum^d_{k=1}T_kS_k\right)\leq \max_{1\leq i\leq d}\left\{ \omega_A(S_iT_i)+\frac{1}{2}\sum^d_{j=1,j\neq i}\left(\|S_iT_j\|_A+\|S_jT_i\|_A\right)\right\}.\]

\end{theorem}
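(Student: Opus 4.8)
The plan is to mimic exactly the block-matrix strategy used in the proofs of Theorem~\ref{mainth1} and Theorem~\ref{main2jd}, now applied to a $d\times d$ operator matrix rather than a $2\times 2$ one, and to feed the result into the freshly-proved Theorem~\ref{theorem:upperbound3}. First I would embed the single operator $\sum_{k=1}^d T_kS_k$ acting on $\mathcal{H}$ into the corner of a $d\times d$ operator matrix over $\mathbb{H}=\oplus_{i=1}^d\mathcal{H}$. Concretely, working with $\mathbb{A}=\mathrm{diag}(A,\dots,A)$, one checks (exactly as in \eqref{semmmmmmmm000}) that placing $\sum_{k=1}^d T_kS_k$ in the $(1,1)$-entry and $0$ elsewhere produces an $\mathbb{A}$-seminorm on every power equal to the $A$-seminorm of the corresponding power of $\sum_{k=1}^d T_kS_k$, hence
\[
r_A\!\left(\sum_{k=1}^d T_kS_k\right)=r_{\mathbb{A}}\!\left[\begin{pmatrix}\sum_{k=1}^d T_kS_k & 0\\ 0 & 0\end{pmatrix}\right].
\]

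Next I would factor this corner matrix as a product of two $d\times d$ operator matrices whose product, taken in the reverse order, is the full matrix with $(i,j)$-entry $S_iT_j$. Writing $\mathbb{R}$ for the matrix with first row $(T_1,\dots,T_d)$ and zeros below, and $\mathbb{C}$ for the matrix with first column $(S_1,\dots,S_d)^{t}$ and zeros to the right, one has $\mathbb{R}\mathbb{C}=\mathrm{diag}\big(\sum_{k=1}^d T_kS_k,0,\dots,0\big)$ while $\mathbb{C}\mathbb{R}=(S_iT_j)_{d\times d}$. By the commutativity property \eqref{commut} of $r_{\mathbb{A}}$ these two have the same $\mathbb{A}$-spectral radius, so
\[
r_A\!\left(\sum_{k=1}^d T_kS_k\right)=r_{\mathbb{A}}\big[(S_iT_j)_{d\times d}\big].
\]
Then I invoke \eqref{dom} to pass from the $\mathbb{A}$-spectral radius to the $\mathbb{A}$-numerical radius, $r_{\mathbb{A}}\big[(S_iT_j)_{d\times d}\big]\leq \omega_{\mathbb{A}}\big[(S_iT_j)_{d\times d}\big]$, and finally apply Theorem~\ref{theorem:upperbound3} to the operator matrix $(S_iT_j)_{d\times d}$, whose $(i,i)$-entries are $S_iT_i$ and whose off-diagonal $(i,j)$-entries are $S_iT_j$; this yields precisely the claimed bound
\[
r_A\!\left(\sum_{k=1}^d T_kS_k\right)\leq \max_{1\leq i\leq d}\left\{\omega_A(S_iT_i)+\frac12\sum_{j=1,\,j\neq i}^d\big(\|S_iT_j\|_A+\|S_jT_i\|_A\big)\right\}.
\]

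The only steps requiring genuine verification are the two "it is not difficult to see" identities: that the corner embedding preserves all powers in $\mathbb{A}$-seminorm (a direct computation on vectors $(x,0,\dots,0)$, identical in spirit to \eqref{semmmmmmmm000} and part~(1) of Lemma~\ref{tawtaw}), and that $\mathbb{R}\mathbb{C}$ and $\mathbb{C}\mathbb{R}$ are as stated (routine block multiplication). I do not anticipate a real obstacle here, since every ingredient—the embedding trick, the $r_{\mathbb{A}}(\mathbb{T}\mathbb{S})=r_{\mathbb{A}}(\mathbb{S}\mathbb{T})$ identity, the domination $r_{\mathbb{A}}\leq\omega_{\mathbb{A}}$, and the matrix numerical radius bound—has already been established in the excerpt; the proof is essentially an assembly of Theorem~\ref{theorem:upperbound3} with the factorization used for Theorem~\ref{main2jd}, just at size $d$ instead of $2$. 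One should, however, be slightly careful to note at the outset that $\mathbb{T}:=(S_iT_j)_{d\times d}$ indeed has all entries in $\mathcal{B}_{A^{1/2}}(\mathcal{H})$ (each $S_iT_j$ is a product of $A$-bounded operators, hence $A$-bounded by \eqref{semiiineq2}), so that Lemma~\ref{lemmajdid} and Theorem~\ref{theorem:upperbound3} genuinely apply.
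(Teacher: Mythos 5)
Your proposal is correct and follows essentially the same route as the paper's own proof: the corner embedding $r_A\left(\sum_{k=1}^d T_kS_k\right)=r_{\mathbb{A}}\bigl[\mathrm{diag}\bigl(\sum_k T_kS_k,0,\dots,0\bigr)\bigr]$, the row-times-column factorization reversed via \eqref{commut} to reach $(S_iT_j)_{d\times d}$, then \eqref{dom} followed by Theorem~\ref{theorem:upperbound3}. The only difference is cosmetic: you spell out explicitly that each entry $S_iT_j$ is $A$-bounded so that Theorem~\ref{theorem:upperbound3} applies, a point the paper leaves implicit.
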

\begin{proof}
Notice first that, it can be verified that
$$\left\|\left(\begin{array}{cccccc}
    X&0&.&.&.&0 \\
    0&0&.&.&.&0 \\
    .& & & & &  \\
    .& & & & & \\
    .& & & & & \\
    0&0&.&.&.&0
    \end{array}\right)\right\|_{\mathbb{A}}=\|X\|_A,$$
for all $X\in \mathcal{B}_{A^{1/2}}(\mathcal{H})$. So, we see that
\begin{align*}
r_A \left(\sum^d_{i=1}T_iS_i\right)
&= r_\mathbb{A} \left(\begin{array}{cccccc}
    \sum^d_{i=1}T_iS_i&0&.&.&.&0 \\
    0&0&.&.&.&0 \\
    .& & & & &  \\
    .& & & & & \\
    .& & & & & \\
    0&0&.&.&.&0
    \end{array}\right) \\
		&= r_\mathbb{A} \left[\left(\begin{array}{cccccc}
    T_1&T_2&.&.&.&T_d \\
    0&0&.&.&.&0 \\
    .& & & & &  \\
    .& & & & & \\
    .& & & & & \\
    0&0&.&.&.&0
    \end{array}\right)\left(\begin{array}{cccccc}
    S_1&0&.&.&.&0 \\
    S_2&0&.&.&.&0 \\
    .& & & & &  \\
    .& & & & & \\
    .& & & & & \\
    S_d&0&.&.&.&0
    \end{array}\right)\right]\\
		&= r_\mathbb{A} \left[\left(\begin{array}{cccccc}
    S_1&0&.&.&.&0 \\
    S_2&0&.&.&.&0 \\
    .& & & & &  \\
    .& & & & & \\
    .& & & & & \\
    S_d&0&.&.&.&0
    \end{array}\right)\left(\begin{array}{cccccc}
    T_1&T_2&.&.&.&T_d \\
    0&0&.&.&.&0 \\
    .& & & & &  \\
    .& & & & & \\
    .& & & & & \\
    0&0&.&.&.&0
    \end{array}\right)\right]\; (\text{ by }\; \eqref{commut}).
\end{align*}
Hence, by using \eqref{dom} and Theorem \ref{theorem:upperbound3} we get
 \begin{align*}
r_A \left(\sum^d_{k=1}T_kS_k\right)
		&= r_\mathbb{A} \left(\begin{array}{cccccc}
    S_1T_1&S_1T_2&.&.&.&S_1T_d \\
    S_2T_1&S_2T_2&.&.&.&S_2T_d\\
    .& & & & &  \\
    .& & & & & \\
    .& & & & & \\
    S_dT_1&S_dT_2&.&.&.&S_dT_d
    \end{array}\right)\\
		&\leq  \omega_\mathbb{A}\left(\begin{array}{cccccc}
    S_1T_1&S_1T_2&.&.&.&S_1T_d \\
    S_2T_1&S_2T_2&.&.&.&S_2T_d\\
    .& & & & &  \\
    .& & & & & \\
    .& & & & & \\
    S_dT_1&S_dT_2&.&.&.&S_dT_d
    \end{array}\right)\\
		&\leq \max_{1\leq i\leq d}\{ \omega_A(S_iT_i)+\frac{1}{2}\sum^d_{j=1,j\neq i}\left(\|S_iT_j\|_A+\|S_jT_i\|_A\right)\}.
\end{align*}
Hence, the proof is complete.
\end{proof}
\section{$A$-spectral radius for functions of operators}\label{s3}
In this section, we discuss the $A$-spectral radius of an operator which is defined by the help of power series $f\left(z\right) =\sum_{n=0}^{\infty
}c_{n}z^{n}$ such that $c_k$ are complex coefficients for all $k$. In order to achieve our goal in this section, we need some lemmas.
\begin{lemma}\label{l.2.2}
Let $T,S\in \mathcal{B}_{A^{1/2}}(\mathcal{H})$ be such that $TS=ST$. Then,
\begin{equation}
\left\vert r_A\left( T\right) -r_A\left( S\right) \right\vert \leq r_A\left(
T-S\right).  \label{e.2.2}
\end{equation}
\end{lemma}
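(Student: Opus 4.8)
The plan is to exploit the commutativity hypothesis together with the subadditivity and submultiplicativity properties of $r_A(\cdot)$ on commuting operators, which are already available in Proposition \ref{22}. Since $TS=ST$, both $T-S$ and $S$ commute, so Proposition \ref{22}(2) applies to the pair $(T-S,S)$, and likewise to the pair $(S-T,T)$.

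First I would write $T=(T-S)+S$. Because $T-S$ and $S$ commute (both are polynomials in the commuting operators $T,S$), part (2) of Proposition \ref{22} gives
\[
r_A(T)=r_A\big((T-S)+S\big)\leq r_A(T-S)+r_A(S),
\]
hence $r_A(T)-r_A(S)\leq r_A(T-S)$. Next, by symmetry, write $S=(S-T)+T$; again $S-T$ and $T$ commute, so the same inequality yields $r_A(S)-r_A(T)\leq r_A(S-T)$. It remains to observe that $r_A(S-T)=r_A(T-S)$: this is immediate since $(S-T)^n=(-1)^n(T-S)^n$, whence $\|(S-T)^n\|_A=\|(T-S)^n\|_A$ for every $n$, and the defining formula \eqref{newrad} gives $r_A(S-T)=r_A(T-S)$. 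Combining the two displayed estimates produces
\[
\big|r_A(T)-r_A(S)\big|\leq r_A(T-S),
\]
which is \eqref{e.2.2}.

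There is essentially no hard part here: the only point requiring a word of care is the verification that the relevant pairs of operators commute, so that Proposition \ref{22}(2) is legitimately invoked; this follows since $T-S$, $S-T$, $T$, and $S$ all lie in the commutative subalgebra generated by $T$ and $S$. One should also note in passing that $T-S\in\mathcal{B}_{A^{1/2}}(\mathcal{H})$ since $\mathcal{B}_{A^{1/2}}(\mathcal{H})$ is a subalgebra (indeed a linear subspace) of $\mathcal{B}(\mathcal{H})$, so that $r_A(T-S)$ is well defined. No deeper machinery than Proposition \ref{22} and \eqref{newrad} is needed.
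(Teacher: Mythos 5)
Your proposal is correct and follows essentially the same route as the paper: decompose $T=(T-S)+S$, apply the subadditivity of $r_A$ on commuting operators from Proposition \ref{22}(2), argue symmetrically for $S=(S-T)+T$, and use $r_A(S-T)=r_A(T-S)$. Your extra remarks justifying this last identity via \eqref{newrad} and the membership $T-S\in\mathcal{B}_{A^{1/2}}(\mathcal{H})$ are fine details the paper leaves implicit.
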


\begin{proof}
Since $TS=ST$, then clearly $T-S$ and $S$ commute. So by Proposition \ref{22} we have%
\begin{equation*}
r_A\left(T\right) =r_A\left(T-S+S\right) \leq r_A\left(T-S\right) +r_A\left(
S\right).
\end{equation*}%
This immediately gives
\begin{equation}
r_A\left(T\right) -r_A\left( S\right) \leq r_A\left(T-S\right).  \label{e.2.3}
\end{equation}%
On the other hand, since $S-T$ and $T$ commute, then again by applying Proposition \ref{22}, we get
\begin{equation*}
r_A\left( S\right) \leq r_A\left( S-T\right) +r_A\left(T\right),
\end{equation*}%
which in turn implies that
\begin{equation*}
r_A\left( S\right) -r_A\left(T\right) \leq r_A\left( S-T\right) =r_A\left(
T-S\right).
\end{equation*}%
So, we get
\begin{equation}
-r_A\left(T-S\right) \leq r_A\left(T\right) -r_A\left( S\right) .  \label{e.2.4}
\end{equation}%
Therefore, we obtain the desired inequality \eqref{e.2.2} by combining \eqref{e.2.3} together with \eqref{e.2.4}.
\end{proof}

\begin{lemma}
\label{l.2.1}Let $\left(T_{n}\right) _{n\in \mathbb{N}}\subset \mathcal{B}_{A^{1/2}}(\mathcal{H})$ be a sequence of
$A$-bounded operators such that $T_{i}T_{j}=T_{j}T_{i}$ for any $i,j\in \mathbb{N}$. Then, for every $p\in \mathbb{N}^*$ we have%
\begin{equation}
r_A\left( \sum_{k=0}^{p}T_{k}\right) \leq \sum_{k=0}^{p}r_A\left(T_{k}\right) .
\label{e.2.1}
\end{equation}
\end{lemma}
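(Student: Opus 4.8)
The plan is to prove \eqref{e.2.1} by induction on $p$, using Proposition \ref{22}(2) repeatedly. The key observation that makes the induction go through is that whenever the operators $T_0,\dots,T_p$ pairwise commute, any partial sum $\sum_{k=0}^{j}T_k$ commutes with each remaining $T_{j+1}$, so the hypothesis of Proposition \ref{22}(2) is satisfied at every stage.

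First I would record the base case $p=1$: since $T_0T_1=T_1T_0$, Proposition \ref{22}(2) gives $r_A(T_0+T_1)\leq r_A(T_0)+r_A(T_1)$, which is exactly \eqref{e.2.1} for $p=1$. (The case $p=0$ is trivial, being an equality.) Then I would assume \eqref{e.2.1} holds for some $p\geq 1$ and prove it for $p+1$. Write $S_p:=\sum_{k=0}^{p}T_k$. Because $T_iT_j=T_jT_i$ for all $i,j$, the operator $S_p$ commutes with $T_{p+1}$; indeed $S_pT_{p+1}=\sum_{k=0}^{p}T_kT_{p+1}=\sum_{k=0}^{p}T_{p+1}T_k=T_{p+1}S_p$. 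Hence, applying Proposition \ref{22}(2) to the commuting pair $S_p$ and $T_{p+1}$,
\begin{equation*}
r_A\left(\sum_{k=0}^{p+1}T_k\right)=r_A\big(S_p+T_{p+1}\big)\leq r_A(S_p)+r_A(T_{p+1})\leq \sum_{k=0}^{p}r_A(T_k)+r_A(T_{p+1})=\sum_{k=0}^{p+1}r_A(T_k),
\end{equation*}
where the second inequality is the induction hypothesis. This closes the induction and establishes \eqref{e.2.1} for all $p\in\mathbb{N}^*$.

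There is essentially no obstacle here; the only point that needs a word of care is the verification that partial sums inherit commutativity with the remaining terms, and that $S_p$ and $T_{p+1}$ both lie in $\mathcal{B}_{A^{1/2}}(\mathcal{H})$ so that $r_A(\cdot)$ is defined on them and Proposition \ref{22}(2) applies — this is immediate since $\mathcal{B}_{A^{1/2}}(\mathcal{H})$ is a subalgebra of $\mathcal{B}(\mathcal{H})$, hence closed under finite sums.
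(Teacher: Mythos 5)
Your proof is correct and follows the same route as the paper: induction on $p$, with the base case given by Proposition \ref{22}(2) and the inductive step using the fact that the partial sum $\sum_{k=0}^{p}T_k$ commutes with $T_{p+1}$ so that the same proposition applies. The extra remarks on closure of $\mathcal{B}_{A^{1/2}}(\mathcal{H})$ under sums are a fine (if implicit in the paper) addition.
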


\begin{proof}
We proceed by induction over $p$. If $p=1,$ then \eqref{e.2.1} is true by using Proposition \ref{22}. Now, assume that \eqref{e.2.1} holds for some $p>1.$ Since the
operators $T_j$ are commuting, then the operators $%
\sum_{k=1}^{p}T_{k}$ and $T_{p+1}$ commute. So, by using the induction hypothesis and applying Proposition \ref{22} we see that
\begin{align*}
r_A\left( \sum_{k=0}^{p+1}T_{k}\right)
& =r_A\left( \sum_{k=0}^{p}T_{k}+T_{p+1}\right)\\
&\leq r_A\left( \sum_{k=0}^{p}T_{k}\right)
+r_A\left(T_{p+1}\right) \\
& \leq \sum_{k=0}^{p}r_A(T_{k}) +r_A\left(T_{p+1}\right)=\sum_{k=0}^{p+1}r_A(T_{k}).
\end{align*}%
Hence, \eqref{e.2.1} is proved for any $p\in \mathbb{N}^*.$
\end{proof}

\begin{lemma}\label{l.2.3}
Let $\left(T_{n}\right) _{n\in \mathbb{N}}\subset \mathcal{B}_{A^{1/2}}(\mathcal{H})$ be a sequence of
$A$-bounded operators such that $T_{i}T_{j}=T_{j}T_{i}$ for any $i,j\in \mathbb{N}$. Let also $T\in \mathcal{B}_{A^{1/2}}(\mathcal{H})$. Then,
\begin{equation}\label{property}
\left(\lim_{n\rightarrow \infty }\|T_n-T\|_A=0\right)\Rightarrow \left(\lim_{n\rightarrow \infty }r_A\left(T_{n}\right) =r_A\left(T\right)\right).
\end{equation}
\end{lemma}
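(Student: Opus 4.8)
The plan is to deduce everything from the estimate of Lemma \ref{l.2.2}. If we knew that $T_n$ commutes with $T$, then Lemma \ref{l.2.2} would give $|r_A(T_n)-r_A(T)|\le r_A(T_n-T)$, and since \eqref{newrad} yields $r_A(S)=\inf_{k\ge 1}\|S^k\|_A^{1/k}\le\|S\|_A$ for every $S\in\mathcal B_{A^{1/2}}(\mathcal H)$, we would conclude $|r_A(T_n)-r_A(T)|\le\|T_n-T\|_A\to 0$, which is exactly \eqref{property}. So the real content is to show that the limit operator $T$ commutes with each $T_n$ — something not explicitly assumed in the statement.

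To get this, fix $n$. Since $\|T_m-T\|_A\to 0$, the sequence $(\|T_m\|_A)_m$ is bounded, say by $M$, and also $\|T\|_A\le M$. Using the submultiplicativity \eqref{semiiineq2} and $T_nT_m=T_mT_n$, for every $m$ we have
\[
\|T_nT-TT_n\|_A\le\|T_n(T-T_m)\|_A+\|T_nT_m-T_mT_n\|_A+\|(T_m-T)T_n\|_A\le 2M\|T_m-T\|_A,
\]
and letting $m\to\infty$ gives $\|T_nT-TT_n\|_A=0$. A priori this is weaker than the operator identity $T_nT=TT_n$ required by Lemma \ref{l.2.2}, and resolving this gap is the one delicate point.

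I would close the gap by passing to the auxiliary Hilbert space $\mathbf R(A^{1/2})$ of Proposition \ref{prop_arias}. The map $S\mapsto\widetilde S$ is, by the uniqueness clause of Proposition \ref{prop_arias}, a unital algebra homomorphism, so the $\widetilde{T_n}$ mutually commute, $\widetilde{S^k}=(\widetilde S)^k$, and by \eqref{acg2009} together with \eqref{newrad} one has $r_A(S)=\lim_k\|(\widetilde S)^k\|^{1/k}=r(\widetilde S)$ as well as $\|T_n-T\|_A=\|\widetilde{T_n}-\widetilde T\|$. The identity $\|T_nT-TT_n\|_A=0$ now reads $\widetilde{T_n}\widetilde T=\widetilde T\widetilde{T_n}$, a genuine commutation relation in the Banach algebra $\mathcal B(\mathbf R(A^{1/2}))$ (alternatively this is immediate from joint continuity of multiplication, since $\widetilde T=\lim_m\widetilde{T_m}$ and each $\widetilde{T_m}$ commutes with $\widetilde{T_n}$). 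Running the argument of Lemma \ref{l.2.2} inside $\mathcal B(\mathbf R(A^{1/2}))$ — namely, for commuting $a,b$ in a Banach algebra one has $|r(a)-r(b)|\le r(a-b)\le\|a-b\|$ — yields $|r_A(T_n)-r_A(T)|=|r(\widetilde{T_n})-r(\widetilde T)|\le\|\widetilde{T_n}-\widetilde T\|=\|T_n-T\|_A\to 0$, which is \eqref{property}. The main obstacle is thus the commutativity of $T$ with the $T_n$, which is not hypothesized; once it is established in the $A$-seminorm — which the $\mathbf R(A^{1/2})$-picture upgrades to a true operator identity on $\mathbf R(A^{1/2})$ — the conclusion is routine.
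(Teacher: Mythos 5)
Your argument is correct, but it is genuinely more careful than the paper's own proof, which is a two-line affair: the author notes that $r_A(X)\leq\|X\|_A$ (immediate from \eqref{newrad}) and then applies Lemma \ref{l.2.2} directly to the pair $(T_n,T)$ to get $|r_A(T_n)-r_A(T)|\leq r_A(T_n-T)\leq\|T_n-T\|_A$. In doing so the paper tacitly uses the operator identity $T_nT=TT_n$, which, as you point out, is not among the hypotheses and cannot be deduced as a genuine operator identity from $\|T_m-T\|_A\to 0$, since $\|\cdot\|_A$ is only a seminorm and the limit $T$ is determined only modulo operators of zero $A$-seminorm. Your detour is exactly the right repair: the estimate $\|T_nT-TT_n\|_A\leq 2M\|T_m-T\|_A\to 0$ gives commutation modulo the seminorm, and passing to $\mathbf{R}(A^{1/2})$ via Proposition \ref{prop_arias} (where $S\mapsto\widetilde S$ is a multiplicative, isometric map by \eqref{acg2009}, so that $r_A(S)=r(\widetilde S)$ and $\widetilde{T_n}\widetilde T=\widetilde T\widetilde{T_n}$ holds on the nose) turns this into honest commutativity in a Banach algebra, where the Lipschitz estimate $|r(a)-r(b)|\leq\|a-b\|$ for commuting elements is standard. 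What your approach buys is rigor in the stated generality; what the paper's approach buys is brevity, and it is harmless in the only place the lemma is used (Theorem \ref{t.2.1}), where $S_n=\sum_{k=0}^n c_kT^k$ visibly commutes with $f(T)$ as operators. A lighter fix would have been to add the hypothesis $T_nT=TT_n$ to the lemma, which the application satisfies; but as the statement stands, your proof is the complete one.
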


\begin{proof}
Notice first that, in general, by using \eqref{newrad} together with \eqref{semiiineq2} it can be checked that
\begin{equation}\label{e.0.2}
r_A(X)\leq \|X\|_A,\quad\forall\,X\in \mathcal{B}_{A^{1/2}}(\mathcal{H}).
\end{equation}
Since $T_n-T\in \mathcal{B}_{A^{1/2}}(\mathcal{H})$ for all $n\in \mathbb{N}$, then an application of \eqref{e.2.2} together with \eqref{e.0.2} gives
\begin{equation*}
\left\vert r_A\left(T_{n}\right) -r_A\left(T\right) \right\vert \leq r_A\left(
T_{n}-T\right) \leq \left\Vert T_{n}-T\right\Vert_A,
\end{equation*}%
for any $n\in \mathbb{N}$. Hence, the property \eqref{property} follows immediately.
\end{proof}

Now, we are in a position to state and prove the following result.

\begin{theorem}\label{t.2.1}
Let $T\in \mathcal{B}_{A^{1/2}}(\mathcal{H})$ be such that $A$ is an invertible operator. Let $f\left( z\right) =\sum_{n=0}^{\infty }c_{n}z^{n}$ be a
power series with complex coefficients and convergent on the open disk $%
D\left( 0,R\right) \subset \mathbb{C}$ with $R>0.$ If $\left\Vert T\right\Vert_A <R,$ then%
\begin{equation}
r_A\left[ f\left( T\right) \right] \leq f_{c}\left( r_A\left( T\right) \right),
\label{e.2.5}
\end{equation}
where $f_{c}\left( z\right) :=\sum_{k=0}^{\infty }\left\vert
c_{n}\right\vert z^{n}$.
\end{theorem}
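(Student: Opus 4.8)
The plan is to reduce the infinite power series to its partial sums and then pass to the limit, using the machinery developed in Lemmas~\ref{l.2.1}, \ref{l.2.3} and Proposition~\ref{22}. Set $f_p(z)=\sum_{k=0}^{p}c_kz^k$ and consider the operators $T_k:=c_kT^k$ for $k\in\mathbb{N}$. Since all the $T_k$ are polynomials in the single operator $T$, they pairwise commute, so the hypotheses of Lemmas~\ref{l.2.1} and \ref{l.2.3} are met. First I would apply Lemma~\ref{l.2.1} to the finite family $\{c_kT^k\}_{k=0}^{p}$ together with the homogeneity $r_A(c_kT^k)=|c_k|\,r_A(T^k)$ and the power identity $r_A(T^k)=[r_A(T)]^k$ from Proposition~\ref{22}(3), to obtain
\begin{equation*}
r_A\!\left(\sum_{k=0}^{p}c_kT^k\right)\leq\sum_{k=0}^{p}|c_k|\,[r_A(T)]^k=f_c\big(r_A(T)\big)_p,
\end{equation*}
the $p$-th partial sum of the nonnegative series $f_c(r_A(T))$. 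Here I need that $r_A(\lambda X)=|\lambda|\,r_A(X)$ for a scalar $\lambda$, which is immediate from the definition \eqref{newrad} since $\|(\lambda X)^n\|_A=|\lambda|^n\|X^n\|_A$.

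Next I would verify convergence in the $A$-seminorm of the partial sums $f_p(T)=\sum_{k=0}^{p}c_kT^k$ to a well-defined operator $f(T)\in\mathcal{B}_{A^{1/2}}(\mathcal{H})$. Because $\|T\|_A<R$, the series $\sum_k|c_k|\,\|T\|_A^k$ converges; using the submultiplicativity \eqref{semiiineq2} we get $\|c_kT^k\|_A\leq|c_k|\,\|T\|_A^k$, so $(f_p(T))_p$ is Cauchy in the seminorm $\|\cdot\|_A$. The point where invertibility of $A$ enters is precisely in turning this Cauchy condition into an honest limit operator: when $A$ is invertible, $\|\cdot\|_A$ is a genuine norm equivalent to $\|\cdot\|$ (since $\|A^{-1/2}\|^{-1}\|x\|\le\|x\|_A\le\|A^{1/2}\|\,\|x\|$), and $\mathcal{B}_{A^{1/2}}(\mathcal{H})=\mathcal{B}(\mathcal{H})$ is complete, so $f_p(T)\to f(T):=\sum_{k=0}^\infty c_kT^k$ with $\|f_p(T)-f(T)\|_A\to 0$.

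Finally I would feed this convergence into Lemma~\ref{l.2.3}, applied to the commuting sequence with $n$-th term $f_n(T)$ and limit $f(T)$ (all lying in $\mathcal{B}_{A^{1/2}}(\mathcal{H})$, and pairwise commuting since they are polynomials in $T$), to conclude $r_A(f_p(T))\to r_A(f(T))$. Combining this with the partial-sum estimate above and letting $p\to\infty$ yields
\begin{equation*}
r_A\big(f(T)\big)=\lim_{p\to\infty}r_A\!\left(\sum_{k=0}^{p}c_kT^k\right)\leq\lim_{p\to\infty}\sum_{k=0}^{p}|c_k|\,[r_A(T)]^k=f_c\big(r_A(T)\big),
\end{equation*}
where the last series converges because $r_A(T)\le\|T\|_A<R$ by \eqref{e.0.2} and $f_c$ has radius of convergence $R$. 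This gives \eqref{e.2.5}. The only genuine subtlety is the role of the invertibility hypothesis on $A$: without it $\|\cdot\|_A$ is merely a seminorm, $(\mathcal{H},\|\cdot\|_A)$ need not be complete, and one cannot guarantee that the Cauchy sequence $f_p(T)$ has a limit inside $\mathcal{B}_{A^{1/2}}(\mathcal{H})$ in the $A$-seminorm — so the main obstacle is not any single estimate but rather ensuring the limit operator $f(T)$ exists and is reached in $\|\cdot\|_A$, which is exactly what invertibility of $A$ supplies. Everything else is a routine assembly of the three preliminary lemmas.
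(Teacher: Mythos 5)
Your argument is correct and follows essentially the same route as the paper: partial sums $S_p=\sum_{k=0}^p c_kT^k$ estimated via Lemma \ref{l.2.1} and Proposition \ref{22}, Cauchyness of $(S_p)$ from $\|T\|_A<R$ with invertibility of $A$ supplying completeness of $(\mathcal{B}_{A^{1/2}}(\mathcal{H}),\|\cdot\|_A)$, and Lemma \ref{l.2.3} to pass to the limit. Your added remark that invertibility makes $\|\cdot\|_A$ equivalent to $\|\cdot\|$ is a slightly more explicit justification of the completeness step than the paper gives, but the proof is the same.
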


\begin{proof}
Let $T\in \mathcal{B}_{A^{1/2}}(\mathcal{H})$ and consider the sequence $\{S_{n}\}$ in $\mathcal{B}_{A^{1/2}}(\mathcal{H})$ such that $S_{n}:=\sum_{k=0}^{n}c_{k}T^{k}$ for all $n\in \mathbb{N}^*$. For any $p,q\in \mathbb{N}^*$ with $p>q$ we have
 \begin{equation}\label{convergent}
\|S_p-S_q\|_A\leq \sum_{k=q+1}^p |c_k| \,\|T\|_A^k.
\end{equation}
Since $\left\Vert T\right\Vert_A <R$, then $\sum_{k\in \mathbb{N}} |c_k| \,\|T\|_A^k$ is convergent. So, it follows from \eqref{convergent} that $\{S_{n}\}$ is a Cauchy sequence in $\mathcal{B}_{A^{1/2}}(\mathcal{H})$. On the other hand, since $A$ is invertible, then it can checked that $(\mathcal{B}_{A^{1/2}}(\mathcal{H}),\|\cdot\|_A)$ is complete. This implies that
\begin{equation}\label{holds}
\lim_{n\to \infty}\|S_{n}-f\left(T\right)\|_A=0.
\end{equation}

Let $n\in \mathbb{N}^*$ and consider the operators $M_{k}:=c_{k}T^{k}$ for $k\in \left\{0,\cdots,n\right\}.$ Since $f\left( T\right) $ commutes with $M_k$ for all $k\in\{1,\cdots,n\}$, then we deduce that $f\left( T\right)S_n=S_nf(T)$. So, since \eqref{holds} holds, it follows from Lemma \ref{l.2.3} that
\begin{equation}\label{5ra}
\lim_{n\rightarrow \infty }r_A\left( S_n\right) =r_A\left[ f\left( T\right) \right].
\end{equation}%

On the other hand, it can observed that $M_{i}M_{j}=M_{j}M_{i}$ for any $i,j\in
\left\{ 0,\cdots,n\right\}.$ So, by applying Lemma \ref{l.2.1} we get
\begin{align*}
r_A\left( \sum_{k=0}^{n}c_{k}T^{k}\right)
& \leq \sum_{k=0}^{n}r_A\left(
c_{k}T^{k}\right) =\sum_{k=0}^{n}\left\vert c_{k}\right\vert r_A\left(
T^{k}\right).
\end{align*}%
This implies, by using Proposition \ref{22}, that
\begin{align}\label{e.2.6}
r_A\left( \sum_{k=0}^{n}c_{k}T^{k}\right)
& \leq \sum_{k=0}^{n}\left\vert c_{k}\right\vert \left[r_A\left( T\right)\right]^{k}.
\end{align}%
Since $r_A\left( T\right) \leq \left\Vert T\right\Vert_A <R,$ then $f_c(r_A(T))=\sum_{k=0}^{\infty }\left\vert c_{k}\right\vert
\left[r_A\left( T\right)\right]^{k}$ is convergent. Hence, by letting $n\rightarrow \infty$ in \eqref{e.2.6} and then using \eqref{5ra}, we obtain \eqref{e.2.5} as required.
\end{proof}

\begin{example}
Let $A$ be the diagonal positive operator on $\ell_{\mathbb{N}^*}^2(\mathbb{C})$ given by $Ae_{2n-1}=e_{2n-1}$ and $Ae_{2n}=2e_{2n}$ for all $n\geq 1$, where $(e_n)_{n\in \mathbb{N}^*}$ denotes the canonical basis of $\ell_{\mathbb{N}^*}^2(\mathbb{C})$. Clearly, $A$ is an invertible operator. On the other hand, it is well-known that
$$\frac{1}{%
1+z}=\sum_{n=0}^{\infty }\left( -1\right) ^{n}z^{n},\;\;\forall\,z\in D\left( 0,1\right) $$
and
$$\exp \left( z\right)=\sum_{n=0}^{\infty }\frac{z^{n}}{n!}\;\;\forall\,z\in \mathbb{C}.$$
Now, let $T\in \mathcal{B}_{A^{1/2}}\left(\mathcal{H}\right)$ be such that $\left\Vert T\right\Vert_A <1$. In view of Theorem \ref{t.2.1} we have
\begin{equation*}
r_A\left[ \left(I\pm T\right) ^{-1}\right] \leq \left[1-r_A\left(T\right)\right]^{-1}.
\end{equation*}%
Moreover, if $T\in\mathcal{B}_{A^{1/2}}\left(\mathcal{H}\right)$ then again by using Theorem \ref{t.2.1} we get
\begin{equation*}
r_A\left[\exp \left(T\right) \right] \leq \exp \left[r_A\left( T\right)\right].
\end{equation*}%
\end{example}

\end{document}